\documentclass[12pt,reqno]{amsart}
\usepackage{microtype}
\usepackage{a4wide}
\usepackage{preamble}
\usepackage{subfiles}
\DeclareMathOperator{\Span}{span}
\newcommand{\modt}{{\rm mod}(2)}
\newcommand{\Ann}{\mathcal A}

\newcommand{\sS}{\mathcal S}
\newcommand{\sT}{\mathcal T}
\newcommand{\sP}{\mathcal P}

\newcommand{\bracket}[1]{\llbracket #1\rrbracket}

\title{$L^2$ Curvature Bounds for  Area-Minimizing Currents Mod(2)}
\author[Lihn]{Zachary Lihn}
\address{Department of Mathematics, Princeton University, Princeton, NJ, USA}
\email{zl3140@princeton.edu}

\begin{document}
\begin{abstract}
    We prove \emph{a priori} interior $L^2$ bounds for the second fundamental form of area-minimizing currents modulo $2$  in every dimension $n\geq 2$ and arbitrary codimension. The bounds depend only on the dimension, codimension, and an upper bound for the mass.
    As a corollary, we show that area-minimizing surfaces modulo $2$ with bounded area  lie in finitely many diffeomorphism classes. This answers a question of \cite{Liu2020WeakCounterexamples}.
\end{abstract}
\maketitle

\section{Introduction}

In this paper we are interested in the regularity of $\modt$ area-minimizing  currents of dimension $n\geq 2$ and arbitrary codimension $m$ in the Euclidean space $\R^{m+n}$. 
Our main result is that the $L^2$ norm of the second fundamental form is \emph{a priori} bounded in terms of the mass.

\begin{thm}\label{thm:main-curvature}
    Let $n\geq 2, m\geq 1$ be integers and let $\Lam>0$. 
    Suppose $M$ is an $n$-dimensional area-minimizing current $\modt$ in $B_8\subset \R^{n+m}$ with $\ptl M\mres B_8 =0$ and $|M|(B_8) \leq \Lambda$. Then there exists a constant $C=C(m,n,\Lambda)>0$ such that 
    \begin{equation}\label{eq:main-L2-bound}
        \int_{B_1} |A_M|^2 d|M| \leq C(n,m,\Lambda).
    \end{equation}
    Here $A_M$ is the second fundamental form of $\reg(M)$ extended by zero across the singular set.
\end{thm}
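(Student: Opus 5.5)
The plan is to follow the Naber--Valtorta approach to $L^2$ curvature bounds, as used for codimension-one stationary varifolds and later for area-minimizing integral currents: a quantitative stratification, an $\varepsilon$-regularity theorem, and a covering argument that controls the singular tube. Throughout, $M$ is minimizing $\modt$ in $B_8$ with $|M|(B_8)\le\Lambda$, so monotonicity gives uniform density bounds on $B_4$.

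\textbf{Step 1: $\varepsilon$-regularity.} I would show there is $\varepsilon=\varepsilon(n,m,\Lambda)>0$ with the following property. If $M\mres B_{2r}(x)$ is $\varepsilon r$-close in flat (or Hausdorff) distance to a current $\modt$ that is invariant under an $(n-1)$-dimensional subspace, then $\int_{B_r(x)}|A_M|^2\,d|M|\le C r^{n-2}$. Arguing by contradiction and compactness, a limit which is $\modt$ minimizing and $(n-1)$-invariant is a cone $\R^{n-1}\times C$, where $C$ is a one-dimensional minimizing cone $\modt$. Such a $C$ is a union of rays with stationary vertex, and minimality $\modt$ forces it to be a line; triple junctions are excluded because multiplicities are reduced $\modt$. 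Hence the limit is a multiplicity-one plane. Allard's theorem, or the regularity theory for $\modt$ minimizers near multiplicity-one planes (White / De Lellis--Hirsch--Marchese--Stuvard), then gives $C^{1,\alpha}$ graphicality, and Schauder theory upgrades this to curvature bounds. The delicate case is a limit that is a plane with multiplicity $Q\ge2$. Here I would use that in the $\modt$ setting the density at regular points is $1$, so blow-ups near points of density $2$ are handled by the regularity results near higher-multiplicity planes quoted earlier in the paper. I would also use the fact that the singular set has dimension at most $n-1$, and that codimension-one singularities are locally classical, with $M$ given by finitely many $C^{1,\alpha}$ sheets meeting along an $(n-1)$-dimensional $C^{1,\alpha}$ submanifold. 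One then needs a sheeted estimate $\int|A|^2\le Cr^{n-2}$ near such points.

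\textbf{Step 2: quantitative stratification.} Let $\mathcal S^{n-2}_{\varepsilon,r}$ denote the points where no ball of radius $s\in[r,1]$ is $\varepsilon$-close to an $(n-1)$-symmetric cone. I would invoke the Naber--Valtorta rectifiable-Reifenberg volume estimate
\[
|B_r(\mathcal S^{n-2}_{\varepsilon,r})\cap B_1|\le C(n,m,\Lambda,\varepsilon)\,r^{m+2}.
\]
Its proof needs only monotonicity and a cone-splitting/$L^2$ best-plane estimate in terms of density drop, and these hold for stationary varifolds. So the estimate applies to $|M|$, which is stationary because $\modt$ minimizers are stationary.

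\textbf{Step 3: covering.} For each $x\in B_1$ let $r_x$ be the largest scale at which the $\varepsilon$-regularity of Step 1 applies. By Step 1, $\int_{B_{r_x}(x)}|A|^2\le Cr_x^{n-2}$. Take a Vitali cover and group the balls dyadically by the size of $r_x$. The balls with $r_x\sim 2^{-k}$ lie in $B_{2^{-k}}(\mathcal S^{n-2}_{\varepsilon,2^{-k}})$, so there are at most $C2^{k(n-2)}$ of them by Step 2 and the density bounds. This only yields $\sum_k C$, which diverges logarithmically. Naber--Valtorta fix this by using the sharper Minkowski-content estimate with the $\varepsilon$-regularity radius, which gives a summable packing $\sum r_i^{n-2}\le C$ over a disjoint cover. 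Applying that packing estimate then bounds $\int_{B_1}|A|^2\le C$.

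\textbf{Main obstacle.} I expect Step 1 to be hardest: an $L^2$-curvature $\varepsilon$-regularity near $(n-1)$-symmetric cones that may carry higher multiplicity $\modt$. There, Allard's theorem fails, and one must use the structure of $\modt$ minimizers near planes of density $2$ or near classical cones. My first attempt would be to show that any $(n-1)$-symmetric $\modt$ minimizing cone is a multiplicity-one plane, so that the limit is never a higher-multiplicity plane. Failing that, I would prove the curvature estimate for branched/sheeted graphs directly using the $\modt$ regularity theory.
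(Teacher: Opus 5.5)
\textbf{The gap is in Step 3.} Steps 1 and 2 are essentially sound. In Step 1 your ``first attempt'' is the one that works: $\modt$ minimizers converge with mass, so an $(n-1)$-invariant limit is $\R^{n-1}$ times a one-dimensional $\modt$ minimizing cone, which is a multiplicity-one line, and Allard applies directly. Your fallback picture of sheets meeting along $(n-1)$-dimensional singular sets does not arise $\modt$, where $\dim\sing(M)\le n-2$.

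The problem is the claimed packing estimate $\sum_i r_i^{n-2}\le C$ for a disjoint cover of the regular part by regularity-scale balls. It is false, and it is not what Naber--Valtorta prove.

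\begin{itemize}
\item Case $n=2$. Near a singular point $p$ the regularity scale satisfies $r_x\approx |x-p|$, and $r_x\le c\,|x-p|$ for some $c<1$. So each ball meets only boundedly many dyadic annuli around $p$. Here $\sum_i r_i^{0}$ is just the number of balls. It is infinite as soon as $\sing(M)\cap B_{1/2}\neq\emptyset$, and of order $\log(1/\min_{B_1} r_x)$ for smooth $M$ close to a singular cone.
\item Case $n\ge 3$. Each dyadic shell at distance $\sim 2^{-k}$ from an $(n-2)$-dimensional piece of (approximate) singular set needs $\gtrsim 2^{k(n-2)}$ balls of radius $\lesssim 2^{-k}$. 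So each shell contributes $\gtrsim 1$ to the sum, which diverges like the number of scales.
\end{itemize}

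The Minkowski estimate of Step 2 is sharp. Combined with $|A_M|\lesssim 1/r_x$ it yields exactly $|M|(\{r_x<r\}\cap B_1)\le Cr^2$, i.e.\ $A_M\in L^{2,\infty}$; the paper notes this is where the earlier methods stop. Bounded packing of the form $\sum r^{n-2}\le C$ holds for covers of the singular stratum by balls at stopping scales. It does not hold for the regular region across all scales.

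\textbf{The missing idea} is an estimate that makes the curvature in these neck regions summable over unboundedly many scales. The paper proceeds as follows.

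\begin{itemize}
\item It first uses the annular decomposition (Theorem~\ref{thm:annular-decomposition}). This writes $B_1$ as a union of $(n-1,\eps_{reg})$-symmetric balls, handled by your Step 1, and $\delta$-annular regions, with $\sum_a r_a^{n-2}+\sum_b r_b^{n-2}\le C$.
\item It then proves that each annular region, although it contains infinitely many scales, satisfies $\int|A_M|^2\le C(\eps+\delta)r_a^{n-2}$ (Theorem~\ref{thm:annular-curvature}).
\item Mechanism: at $y$ with $r=\dist(y,T)$, pick $w_0,\dots,w_{n-2}\in T\cap B_{r/4}(z)$ in general position. The normal parts $Z_i=\pi^\perp_{T_xM}(x-w_i)$ of the dilation fields are Jacobi fields with $\nabla^\perp_M Z_i=-A_M(\cdot,\pi_{T_xM}(x-w_i))$.
\item Trace-freeness of $A_M$ (Lemma~\ref{lem:pointwise-curvature}) then bounds $|A_M|^2$ by $Cr^{-2}\sum_i|\nabla^\perp_M Z_i|^2$.
\item A Caccioppoli inequality, together with the monotonicity identity, gives $\int_{B_{r/2}(y)}|A_M|^2\,d|M|\le C\int_{B_{r/4}(z)}|\theta_{2r}-\theta_r|\,d\cH^{n-2}_T$. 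The Caccioppoli step uses $|A_M|\le \eta/r$ near $y$, which comes from $\modt$ minimizing cones with $(n-2)$-dimensional spine being flat away from the spine.
\item A Whitney cover, Ahlfors regularity of $T$, and telescoping of the monotone quantity $r\mapsto\theta_r(w)$ sum these local bounds to $C(\eps+\delta)$.
\end{itemize}

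Without some estimate of this kind, converting curvature into density pinching that telescopes across scales, your covering argument can only give $A_M\in L^{2,\infty}$.
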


As a corollary, we prove that $\modt$ area-minimizing surfaces in arbitrary codimension and bounded area lie in finitely many diffeomorphism classes.
    \begin{cor} \label{cor:2d-topology-bound}
    Let $m\geq 1$ and  $\Lam>0$. 
    There exists $J=J(m,\Lam)$ and smooth surfaces $S_1,\ldots, S_J$ such that if $M$ is any two-dimensional area-minimizing current $\modt$ in $B_{16}$ with $\ptl M\mres B_{16} =0$, $|M|(B_{16})\leq \Lam$, then 
    $\#(\sing (M)\cap B_1) \leq J$ and there is a radius $r\in (1,2)$ such that $\reg(M)\cap B_r$ is diffeomorphic to $S_j$ for some $j\in \{1,\ldots, J\}$.
\end{cor}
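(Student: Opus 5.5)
The corollary will follow from Theorem~\ref{thm:main-curvature} together with the structure theory of two-dimensional $\modt$ minimizers. I will use the latter as known input: the singular set is discrete, and at each singular point every tangent cone is a finite union of planes, so that near a singular point $M$ is a finite union of immersed minimal discs, or of branched covers of such discs. The plan has three steps: bound the number of singular points, bound the topology of the regular part, then pass from finitely many topological types to finitely many diffeomorphism classes.

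\textbf{Step 1 (bound on $\#\sing(M)\cap B_1$).} I would argue by contradiction and compactness. Take minimizers $M_k$ with mass at most $\Lam$ and with $\#(\sing(M_k)\cap B_1)\to\infty$. By compactness of $\modt$ minimizers, a subsequence converges, with mass convergence, to a minimizer $M_\infty$ in $B_{16}$. Singular points of $M_k$ must then accumulate at some point $p\in \overline{B_1}$. The $L^2$ bound of Theorem~\ref{thm:main-curvature}, applied after rescaling on balls inside $B_{16}$, gives $\int_{B_2}|A_{M_k}|^2\le C$. I expect this step to be the main obstacle. I would show that each singular point carries a definite quantum of topology or curvature. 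One route is Gauss--Bonnet on a small sphere around the singular point: the total curvature of a nonflat cone-like configuration is bounded below by $c\,\epsilon$ unless the density drops to $1$. Accumulation at $p$ would then force infinitely many disjoint balls each carrying curvature at least $\epsilon_0$, since $\epsilon$-regularity for $\int|A|^2$ small rules out singular points there, contradicting the $L^2$ bound. So I would aim for the following $\epsilon$-regularity statement: if $\int_{B_r(x)}|A|^2<\epsilon_0$, then $\sing(M)\cap B_{r/2}(x)$ contains at most one point, and that point has a controlled model. A covering argument then bounds the number of singular points by $C/\epsilon_0$.

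\textbf{Step 2 (Euler characteristic and boundary count).} I would choose $r\in(1,2)$ by coarea and Sard so that $\partial B_r$ is transverse to $\reg(M)$, avoids $\sing(M)$, and satisfies $\int_{M\cap\partial B_r}|A|\le C$ and $\mathcal H^1(M\cap\partial B_r)\le C$. Gauss--Bonnet on $\reg(M)\cap B_r$, with small discs removed around the singular points, expresses $\chi$ in three parts: the interior curvature term $\int|A|^2$ (note $K\le0$ and $|K|\le |A|^2/2$), the geodesic curvature of the boundary curves, which is bounded via $\int_{M\cap\partial B_r}|A|$ plus the curvature of the sphere, and contributions from the singular points, which are bounded using the density bound $\Theta\le C(\Lam)$. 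Monotonicity also bounds the number of boundary components, since each one has length bounded below. This yields bounds on the genus, the number of boundary components, and the number of ends at singular points.

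\textbf{Step 3 (finitely many classes).} A compact surface with boundary (or with punctures) is determined up to diffeomorphism by its orientability, genus, and number of boundary components and punctures. Since all of these are bounded, only finitely many types $S_1,\ldots,S_J$ occur. Enlarging $J$ if necessary to cover the singular-point count from Step 1 completes the proof.
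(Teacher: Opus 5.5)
\textbf{Step 1 is a genuine gap}, and it carries the first half of the corollary. The mechanism you propose, that singular points or clusters of them carry a definite quantum of $\int|A|^2$, is not available.

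\begin{itemize}
\item Your dichotomy (``a nonflat cone-like configuration has curvature $\geq c\varepsilon$ unless the density drops to $1$'') is false. The tangent cone at any singular point is itself a $\modt$ minimizer: a union of at least two planes, with $A\equiv 0$ and density $\geq 2$.
\item Since $|A|^2\in L^1$ by Theorem~\ref{thm:main-curvature}, $\int_{B_\rho(p)}|A|^2\to 0$ as $\rho\to 0$ at every singular point. So an isolated singular point carries no curvature at all.
\item Your $\varepsilon$-regularity claim for pairs of singular points is only asserted. Blowing up two nearby singular points along a sequence with $\int|A|^2\to0$ only tells you the limit is a union of flat sheets crossing at two distinct points, and nothing in your sketch excludes this.
\item Gauss--Bonnet is blind to crossings. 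Cutting a small disc out of each sheet at $p$ lowers $\chi$ by one per sheet and adds $\approx-2\pi$ of boundary geodesic curvature per sheet, so the two effects cancel.
\end{itemize}

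The bound on $\#\sing(M)$ is a density/monotonicity phenomenon, not a curvature one. The paper reads it off from Theorem~\ref{thm:annular-decomposition} with $n=2$. There the approximate singular sets are $0$-dimensional, $\mathcal H^0(\tilde{\mathcal S})=0$ forces the residual set to be empty, and $\mathcal H^0(\mathcal T)\le C$ counts the points. A direct proof would need a multiscale compactness argument. It would use that $2$-dimensional $\modt$ minimizing cones are unions of planes meeting only at the vertex, so a singular point at unit distance from the vertex of a nearly conical region contradicts Allard. That is essentially the quantitative stratification you have not supplied.

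\textbf{Step 2 has two further concrete errors.}

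First, you cannot control the curvature of $\Gamma_r=\reg(M)\cap\partial B_r$ by $\int_{\Gamma_r}|A|$ and $\mathcal H^1(\Gamma_r)$. One only has $|\kappa_{\Gamma_r}|\,|\nabla_M|x||\le r^{-1}+2|A_M|$, and $|\nabla_M|x||$ can be arbitrarily small even on a transverse slice. The correct selection uses the coarea factor $|\nabla_M|x||$ to cancel exactly this degeneracy:
\[
\int_1^2\int_{\Gamma_t}|\kappa_{\Gamma_t}|\,dt\le\int_{B_2\setminus B_1}(|x|^{-1}+2|A_M|)\,d|M|\le C,
\]
by Cauchy--Schwarz and Theorem~\ref{thm:main-curvature}.

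Second, boundary components do not have length bounded below, so monotonicity does not count them. A sheet nearly tangent to $\partial B_r$ meets it in an arbitrarily short closed curve, and the circles around singular points are short by construction. What counts them is Fenchel's theorem: each closed curve has total curvature $\ge 2\pi$, giving $b_0(\partial F)\le C$. You also need that every component of $F$ meets $\partial F$, because there are no closed minimal surfaces in $\R^{2+m}$. This bounds $b_0(F)$, which is what turns $\chi(F)\ge -C$ into a bound on $b_1(F)$.

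With these repairs and the singular-point count taken from the annular decomposition, your Gauss--Bonnet estimate and Step 3 coincide with the paper's argument.
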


The interest in curvature estimates  stems from their importance in the regularity and existence theory for minimal submanifolds. 
In the case of minimal hypersurfaces, the curvature estimates of \cite{Simons1968,Simon1976Curvature,SchoenSimonYau1975, SchoenSimon1981,Bellettini2025} have proven enormously successful with powerful applications such as the min-max existence theory for minimal hypersurfaces in Riemannian manifolds (see for instance  \cite{Pitts1981,ColdingDeLellis2003,Smith1982,MarquesNeves2014}). 

On the other hand, curvature estimates for the second fundamental form  of minimal submanifolds in higher codimension are much more subtle. 
While the rectifiability and partial regularity for the singular set of area-minimizing currents in higher codimension is now known \cite{Simon1993,DeLellisSpadaro2014,DeLellisSpadaro2016III,DeLellisSkorobogatova2025I,DeLellisSkorobogatova2025II,DeLellisMinterSkorobogatova2023III,KrummelWickramasekera2023I,KrummelWickramasekera2023II,KrummelWickramasekera2026III},
the examples of \cite{Liu2020WeakCounterexamples} show that the  estimates of \cite{SchoenSimonYau1975} in fact fail for area-minimizing integral currents. Indeed, in \cite{Liu2020WeakCounterexamples} a sequence of smooth  area-minimizing integral currents is constructed which have  uniformly bounded area and genus going to infinity (and hence $L^2$ norm of $A_M$ diverging as well).
Nevertheless, the examples of \cite{Liu2020WeakCounterexamples} are area-minimizing for integral coefficients and one can hope that the case of $\modt$ coefficients satisfies better properties.
Indeed, in those examples the sequence converges to a double copy of a plane and so cannot be $\modt$ area-minimizing.

By the work of \cite{Naber2020}, it is relatively straightforward to show that $A_M$ lies in the Lorentz space $L^{2,\infty}$ by using Allard's regularity theorem \cite{Allard1972}.
However, the methods there do not extend fully to the endpoint estimate $A_M\in L^{2}$ and require new ideas. 
Indeed, the main obstacle to the $L^2$ estimate is the need to sum across an unbounded number of scales in annular regions (see Section \ref{subsec:annular-decomposition}). Our main contribution in Theorem \ref{thm:annular-curvature} is to show that $A_M$ has uniformly bounded (in fact, uniformly small) $L^2$ norm across the entire annular region.
Combined with the Annular Decomposition Theorem~\ref{thm:annular-decomposition}, a simple covering argument gives Theorem~\ref{thm:main-curvature}. For the reader's convenience, the necessary changes to \cite{ChowJiangNaber} to establish the annular decomposition and structure theorems for stationary varifolds are sketched in Appendix \ref{sec:A:annular}.
We give a more detailed sketch of the proof below.

We note that  Theorem~\ref{thm:main-curvature} and Corollary~\ref{cor:2d-topology-bound}  both likely generalize in a straightforward manner to Riemannian manifolds with bounded geometry. The proof carries over directly with additional lower-order errors stemming from  ambient curvature. However, to keep the presentation simple we restrict ourselves to Euclidean space in this paper.

\subsection{Sketch of the Proof}
The proof of Theorem~\ref{thm:main-curvature} follows the model of \cite{NaberValtorta2019YangMills,Cheeger2015,JiangNaber2021,NaberValtorta2024HarmonicMaps} for proving scale-critical curvature estimates  in other geometric equations. 
We begin by decomposing the regular part of $M$ into two types of regions, which we call the $(n-1,\eps)$-symmetric regions and the $\delta$-annular regions. By the Annular Decomposition Theorem~\ref{thm:annular-decomposition}, we can construct such a decomposition with a uniformly controlled number of annular and symmetric regions. 
It therefore suffices to estimate the second fundamental form of each type of region independently.
The $\modt$ structure ensures that the analysis of the $(n-1,\eps)$-symmetric regions is straightforward. Indeed, by a contradiction and compactness argument along with Allard's regularity theorem it is immediate to see that these regions are diffeomorphic to (a ball in) an $n$-dimensional flat plane and have uniformly controlled second fundamental form by standard elliptic regularity theory (see Proposition~\ref{prop:symmetry-eps-regularity}). 
Thus the bulk of our analysis will focus on the annular regions, which is where the novelty of this paper lies.

The main estimate on annular regions is Theorem~\ref{thm:annular-curvature}, which, roughly speaking, says that for a $\delta$-annular region $\cA\subset B_8$ we have 
\begin{equation}
    \int_{\cA\cap B_1} |A_M|^2 d|M| \leq C (\eps + \delta)
\end{equation}
for any $\eps>0$ and $\delta<\delta(n,m,\Lam,\eps)$. 
Let us first explain the proof for the case of surfaces when $n=2$, since this already suffices to illustrate  the central difficulties and ideas. 
In this case the $\delta$-annular region $\Ann = B_{16}\setminus \ol B_{r_0}$ is merely an ordinary annulus, with $r_0\in [0,8)$  an unknown inner radius. 
From the definition of an annular region, the classification of area-minimizing cones $\modt$ \cite{Morgan1982}, and Allard's regularity theorem \cite{Allard1972},  
it is easy to see that $M\mres (B_{4r}\setminus \ol B_{r/4})$ is smooth, diffeomorphic to a region of a 2-dimensional $\modt$ area-minimizing cone (away from the spine), and has the estimate 
\begin{equation} \label{eq:intro-weak-sff}
    |A_M|^2 \leq C \eps r^{-2}
\end{equation} 
in this region. 
The key issue, however, is that the estimate \ref{eq:intro-weak-sff} is not summable over all scales, and in particular diverges logarithmically in the bottom scale $r_0$ which we cannot control (indeed, at a singular point $r_0=0$). This is the reason for the estimate $A_M\in L^{2,\infty}$ but not the full estimate $A_M\in L^2$.

Our  solution is to estimate the second fundamental form in terms of the mass density drop (see also \cite{JiangNaber2021} for a similar argument for Einstein manifolds). 
If we let $\Theta_r = \frac{|M|(B_r)}{\ome_n r^n}$ be the usual mass density for $M$ centered at the origin, we will prove an estimate of the form
\begin{equation} \label{eq:intro-density-drop}
    \int_{B_{2r}\setminus B_r} |A_M|^2 d|M| \leq C(m,n,\Lam) |\Theta_{4r}-\Theta_{r/4}|
\end{equation}
for all $r\in [r_0,2)$.
Since $\Theta_r$  is monotone, we may telescope \ref{eq:intro-density-drop} across all dyadic scales in $[r_0,2)$ to obtain
\begin{equation*}
    \int_{B_2\setminus B_{4r_0}} |A_M|^2 d|M| \leq C |\Theta_8 - \Theta_{r_0}|< C \delta,
\end{equation*}
where the second inequality is immediate from the definition of an annular region. 
For technical reasons we must estimate the region $B_{4r_0}\setminus B_{r_0}$ separately, but this is no issue since this region occurs only at a bounded number of scales for which we may apply \ref{eq:intro-weak-sff}.

It remains to describe \ref{eq:intro-density-drop}. We apply minimality to estimate the full norm $|A_M|^2$ by the component in the radial direction $r^{-2}|A_M(\cdot, \pi_{T_xM}(x))|^2 = r^{-2} |\nabla^\perp_M \pi_{T_xM}^\perp (x)|^2$ by the Gauss-Weingarten formula. On the other hand the vector field $Z= \pi_{T_xM}^\perp (x)$ is a Jacobi field, and therefore satisfies the Jacobi equation $\Delta_M^\perp Z+ \td A_M(Z)=0$ (see Section~\ref{subsec:jacobi-fields} for the notation). Integrating by parts and using \ref{eq:intro-weak-sff} we obtain
\begin{equation*}
  r^{-2} \int_{B_{2r}\setminus B_r} |\nabla^\perp_M Z|^2 d|M|\leq C r^{-4} \int_{B_{4r}\setminus B_{r/4}} |\pi_{T_xM}^\perp (x)|^2 d|M| \leq C|\Theta_{4r}-\Theta_{r/4}|
\end{equation*}
by the monotonicity formula, which finishes the proof.

The proof in higher dimensions $n>2$ proceeds in much the same way except that we have additional components of the second fundamental form corresponding roughly to the  directions of approximate translation invariance in the annular region. 
Nevertheless we can still estimate the $L^2$ norm of $A_M$ by the density drop (integrated along the \emph{approximate singular set}), by taking
 again the Jacobi fields corresponding to translational and radial symmetry.  
 For technical reasons we will not work in annuli but instead in balls of radius comparable to the distance to the approximate singular set. We will also replace $\Theta_r$ by a mollified version $\theta_r(x)$ which is easier to work with (see Section~\ref{subsec:effective-symmetry}). Nevertheless, the ideas of the proof are similar.

\section{Preliminaries}
We collect here the facts about quantitative stratification which we need for the proof. 
The proofs are slight modifications of those in \cite{Naber2020,Cheeger2015,JiangNaber2021,NaberValtorta2019YangMills}, and especially the forthcoming manuscript \cite{ChowJiangNaber}, which we follow  closely. 
For the reader's convenience we provide a sketch of the necessary modifications for stationary integral varifolds in the appendices. 
For background on $\modt$ currents we refer to \cite{Fleming1966,Federer1970,Chodosh2012GMT} and \cite{Simon1993,Morgan1982} for their regularity theory.

\subsection{Effective Symmetry, Regularized Mass} \label{subsec:effective-symmetry}

\begin{defn}[Regularized Mass] \label{def:regularized-mass}
    Let $M$ be an integral stationary $n$-varifold in $B_4$. We define
    \begin{equation}
        \theta_r(x) :=\frac{1}{\ome_n r^n}\int_{B_r(x)}\left(1-\frac{|y-x|^2}{r^2} \right)d|M|(y).
    \end{equation}
\end{defn}

\begin{prop}[Regularized Monotonicity Formula] \label{prop:monotonicity}
    Let $M$ be an integral stationary $n$-varifold in $B_4$. Then, for every $B_r(x)\subset B_4$ we have 
    \begin{equation}
        r\frac{d}{dr}\theta_r(x) = \frac{2}{\ome_n r^{n+2}}\int_{B_r(x)} \left| \pi_{T_yM}^\perp(y-x)\right|^2d|M|(y) 
    \end{equation}
    In particular, we have for every $1\leq \lam<2$,
    \begin{equation} \label{eq:monotonicity-pinching}
        \theta_{2r}(x)-\theta_r(x) \geq \frac{C(n,\lam)}{ r^{n+2}} \int_{B_{\lam r}(x) }|\pi_{T_yM}^\perp(y-x)|^2d|M|(y)
    \end{equation}
\end{prop}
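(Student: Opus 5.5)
The plan is to derive the formula from the first variation formula for the stationary varifold $M$, applied to a radial vector field cut off by the smooth profile $\phi(s)=(1-s)_+$. Translate so that $x=0$. The profile $1-|y|^2/r^2$ is only Lipschitz at $|y|=r$, so first regularize by taking $\phi_k\to(1-s)_+$ smooth, run the computation, and pass to the limit. Alternatively, the identity can be checked directly, since everything below is Lipschitz in $r$.

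\emph{Step 1: a closed formula for $\theta_r$.} Set $\psi(y,r)=(1-|y|^2/r^2)_+$ and write $\theta_r=\frac{1}{\ome_n r^n}\int\psi\,d|M|$. Differentiating in $r$,
\begin{equation*}
r\frac{d}{dr}\theta_r=-\frac{n}{\ome_n r^n}\int\psi\,d|M|+\frac{2}{\ome_n r^{n+2}}\int_{B_r}|y|^2\,d|M|.
\end{equation*}

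\emph{Step 2: first variation with $X(y)=\psi(y,r)\,y$.} Stationarity gives $\int \mathrm{div}_M X\,d|M|=0$. We compute
\begin{equation*}
\mathrm{div}_M X=n\psi+\langle\nabla^M\psi,y\rangle=n\psi-\frac{2}{r^2}|\pi_{T_yM}y|^2 \quad\text{on } B_r.
\end{equation*}
Hence
\begin{equation*}
n\int\psi\,d|M|=\frac{2}{r^2}\int_{B_r}|\pi_{T_yM}y|^2\,d|M|.
\end{equation*}
Substituting this into Step 1 and using $|y|^2-|\pi_{T_yM}y|^2=|\pi^\perp_{T_yM}y|^2$ yields exactly the claimed identity. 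Since $X$ is only Lipschitz, justify its use as a test field by approximating $\psi$ by smooth functions, or note that the first variation formula holds for Lipschitz compactly supported fields by density.

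\emph{Step 3: the pinching inequality.} Integrate the identity in $s$ from $r$ to $2r$:
\begin{equation*}
\theta_{2r}-\theta_r=\int_r^{2r}\frac{2}{\ome_n s^{n+3}}\int_{B_s}|\pi^\perp y|^2\,d|M|\,ds.
\end{equation*}
Restrict to $s\in[\lam r,2r]$, an interval of positive length because $\lam<2$. On this range $B_s\supset B_{\lam r}$ and $s^{-n-3}\ge (2r)^{-n-3}$. This gives the bound with constant $C(n,\lam)=\frac{2(2-\lam)}{\ome_n 2^{n+3}}$.

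The only delicate point is the regularity of the test field in Step 2. Because $\psi$ vanishes continuously at $|y|=r$, no boundary term arises; this is precisely why the regularized mass gives a clean identity. The whole argument stays within $B_4$ since $B_{2r}(x)$ is taken inside the domain. Strictly, the integrated form in Step 3 needs $B_{2r}(x)\subset B_4$, which is implicit in the statement.
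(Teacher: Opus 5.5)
Your proposal is correct and follows the paper's proof: test stationarity with $X(y)=\bigl(1-|y-x|^2/r^2\bigr)_+(y-x)$ to obtain the derivative identity, then integrate over $s\in[\lambda r,2r]$ and restrict the inner integral to $B_{\lambda r}(x)$. Your explicit divergence computation, the constant $C(n,\lambda)=\frac{2(2-\lambda)}{\omega_n 2^{n+3}}$, and your remark that the pinching estimate implicitly needs $B_{2r}(x)\subset B_4$ are all accurate details that the paper leaves unstated.
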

\begin{proof}
    The proof is a straightforward modification of the usual monotonicity formula by testing with the vector field $X_{x,r}(y) = \left( 1- \frac{|y-x|^2}{r^2}\right)_+ (y-x)$.
    We may then integrate over $s\in [\lam r, 2r]$, restrict the inner integral to $B_{\lam r}(x)$, and integrate in $s$ to obtain \ref{eq:monotonicity-pinching}.
\end{proof}

We next introduce the notion of approximate symmetry. This is different from the definition in \cite{Naber2020} and is the direct analogue of that in \cite{ChowJiangNaber}. 
Here and in the rest of the paper we will let $|A|$ denote the Hilbert-Schmidt norm of a matrix $A$, and $\|A\|$ its operator norm. 
\begin{defn}[$(k,\delta)$-symmetry]
    Let $M$ be an integral $n$-varifold in $B_2$ with $|M|(B_2)<\infty$. We say that $M$ is $(k,\delta)$-symmetric on the ball $B_r(x)\subset B_2$ if there exists a  $k$-dimensional vector space $L^k$ such that 
    \begin{equation}
        \frac{1}{\ome_n r^n}\int_{B_r(x)} \left(|\pi_{T_yM}^\perp|_{L^k}|^2 + r^{-2} |\pi_{T_yM}^\perp \pi_{L^\perp}(y-x)|^2 \right) d|M|(y) <\delta.
    \end{equation}
\end{defn}

As in the proof of the rigidity case for the usual monotonicity formula, one can verify that $(k,0)$-symmetry implies conicality and a spine of at least $k$ dimensions. 
We leave the proof to the reader as it closely follows  standard proofs of the monotonicity formula.
\begin{lem} \label{lem:(k,0)-symmetry-cone}
    Suppose $M$ is a stationary integral  $n$-varifold and $L^k$ is a $k$-linear subspace with 
    \begin{equation*}
        \frac{1}{\ome_n r^n}\int_{B_r(0^{n+m})} \left( |\pi_{T_yM}^\perp|_L|^2 + r^{-2} |\pi_{T_yM}^\perp\pi_{L^\perp}(y)|^2\right) d|M|(y) =0.
    \end{equation*}
    Then $M$ is radially homogeneous and  translation invariant along $L$ in $B_r(0)$. In particular, $\spine(M)\supset L$.
\end{lem}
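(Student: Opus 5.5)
The vanishing integral splits into two pointwise statements, and I will treat each with its own first-variation argument. Since the integrand is nonnegative and vanishes $|M|$-a.e. in $B_r(0)$, for $|M|$-a.e. $y\in B_r(0)$ we have
\begin{equation*}
\pi_{T_yM}^\perp v = 0 \quad \text{for all } v\in L, \qquad \pi_{T_yM}^\perp\pi_{L^\perp}(y)=0.
\end{equation*}
The first identity says $L\subset T_yM$ a.e. Since $\pi_{L}(y)\in L\subset T_yM$, the second then gives $\pi_{T_yM}^\perp(y) = \pi_{T_yM}^\perp\pi_L(y)+\pi_{T_yM}^\perp\pi_{L^\perp}(y)=0$, so $y\in T_yM$ a.e. Translation invariance along $L$ will come from the first identity and radial homogeneity from the second.

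\emph{Radial homogeneity.} This is the rigidity case of the monotonicity formula. By Proposition~\ref{prop:monotonicity}, $\frac{d}{ds}\theta_s(0)=0$ for all $s\le r$, because the right-hand side only involves $|\pi^\perp_{T_yM}(y)|^2$ on $B_s\subset B_r$. I will instead use the sharp form of the unregularized monotonicity identity: testing the first variation with $X(y)=\phi(|y|)y$ gives, for $0<\sigma<\rho\le r$,
\begin{equation*}
\frac{|M|(B_\rho)}{\rho^n}-\frac{|M|(B_\sigma)}{\sigma^n}=\int_{B_\rho\setminus B_\sigma}\frac{|\pi^\perp_{T_yM}y|^2}{|y|^{n+2}}\,d|M|.
\end{equation*}
The right-hand side vanishes, so the density ratio is constant on $(0,r]$. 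The standard argument, e.g.\ Simon's proof that vanishing of this term implies $\eta_{0,\lambda\#}M=M$, then shows that $M\mres B_r$ is invariant under dilations $y\mapsto \lambda y$ within $B_r$. Concretely, one differentiates $\lambda\mapsto \int f(y/\lambda)\,d|M|$ for $f$ compactly supported in $B_r$ and uses the first variation with $X=f\,y$ together with $y\in T_yM$; this gives $\frac{d}{d\lambda}=0$.

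\emph{Translation invariance.} Fix $v\in L$ and $f\in C_c^1(B_r)$. I will test the first variation with the constant-direction field $X=f\,v$. Stationarity gives
\begin{equation*}
0=\int \operatorname{div}_{T_yM}(f v)\,d|M| = \int \nabla^{T_yM}f\cdot v\,d|M| = \int \nabla f\cdot \pi_{T_yM}v\,d|M| = \int \nabla f\cdot v\,d|M|,
\end{equation*}
using $\pi_{T_yM}v=v$ a.e. Thus $\frac{d}{dt}\int f(y+tv)\,d|M|=0$, which shows the weight measure $|M|$ is translation invariant along $L$ in $B_r$ (in the sense of local translates staying inside the ball). Since $T_yM\supset L$ is itself invariant, and a varifold is determined by $|M|$ together with its a.e.\ tangent planes, the integral varifold is invariant as well. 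Combining this with homogeneity, $M$ in $B_r$ is a cone that splits $L$ off, i.e.\ $M\mres B_r=(M'\times L)\mres B_r$, so $\spine(M)\supset L$.

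\emph{Main obstacle.} The expected difficulty is purely technical: making ``invariance in $B_r(0)$'' precise for a varifold defined only on a ball. Translates and dilates leave the ball, so invariance has to be stated via test functions supported in $B_r$. One also needs to pass from invariance of $|M|$ to invariance of the full varifold, using that the tangent planes are determined a.e.\ by $|M|$ through rectifiability. Near the boundary of $B_r$ one works with compactly supported $f$, and extending the cone structure to a global cone is then immediate by homogeneity.
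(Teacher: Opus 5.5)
Your argument is correct and is essentially the route the paper intends (the paper leaves the proof to the reader as the rigidity case of the monotonicity formula). Testing stationarity with $X=fv$, $v\in L$, is a direct way to get the $L$-invariance; it is equivalent to rerunning the rigidity argument centered at points $x\in L$, since there $\pi^\perp_{T_yM}(y-x)=0$ as well.

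Two small fixes are needed:
\begin{itemize}
\item The dilation-invariant quantity is $\lambda^{-n}\int f(y/\lambda)\,d|M|(y)$. Without the factor $\lambda^{-n}$, your own first-variation computation gives derivative $n\lambda^{-1}\int f(y/\lambda)\,d|M|\neq 0$.
\item The step from invariance of $|M|$ to invariance of the varifold should not rest on ``$T_yM\supset L$ is invariant'', which does not give $T_{y+tv}M=T_yM$. The correct reason is that the tangent planes and multiplicity of a rectifiable varifold are determined by blow-ups of $|M|$.
\end{itemize}
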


The next proposition  converts enough $(n-1)$-symmetry into regularity. It follows from the non-existence of nonflat $(n-1)$-symmetric $\modt$ area-minimizing cones and the multiplicity 1 hypothesis to apply Allard's regularity theorem, and fails dramatically without the $\modt$ area-minimizing assumption.

\begin{prop}[Symmetry-based regularity] \label{prop:symmetry-eps-regularity}
    There exists $\eps_{reg}=\eps_{reg}(n,m,\Lambda)>0$ such that the following holds. Suppose $M$ is a $\modt$ $n$-dimensional area-minimizing current in $B_4$ with $\ptl M\mres B_4 =0$ and $|M|(B_4)\leq \Lambda$. If $M$ is $(n-1,\eps_{reg})$-symmetric in $B_4$, then  $M$ is smooth in $B_{63/64}$ and 
    \begin{equation}
        |A_M|\leq c(m,n,\Lambda) \quad \text{in }B_{63/64}
    \end{equation}
    for some $c(m,n,\Lambda)>0$.
\end{prop}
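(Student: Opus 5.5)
We argue by contradiction and compactness, combined with the classification of $(n-1)$-symmetric $\modt$ area-minimizing cones and Allard's regularity theorem. Suppose the statement fails. Then there are $\modt$ area-minimizing currents $M_j$ in $B_4$ with $\ptl M_j\mres B_4=0$ and $|M_j|(B_4)\le\Lam$. Each $M_j$ is $(n-1,1/j)$-symmetric in $B_4$ with respect to some $(n-1)$-plane $L_j$, yet there is no uniform $\eps$-regularity bound on $B_{63/64}$. We will show that for $j$ large, $M_j$ is a small $C^{1,\alpha}$ perturbation of a multiplicity-one plane on a neighborhood of $\ol B_{63/64}$. Elliptic regularity then gives a contradiction, and it also yields the uniform constant $c(m,n,\Lambda)$ once $\eps_{reg}$ is fixed.

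\textbf{Step 1: compactness and identification of the limit.} Pass to a subsequence with $L_j\to L$. By the compactness theorem for $\modt$ area-minimizing currents (Fleming, Federer), $M_j\to M_\infty$ as flat chains mod $2$. Here $M_\infty$ is $\modt$ area-minimizing in $B_4$, the masses converge ($|M_j|\to|M_\infty|$ as Radon measures), and the associated varifolds converge. The symmetry integrand
\[
|\pi^\perp_{T_yM}|_{L}|^2+\tfrac1{16}|\pi^\perp_{T_yM}\pi_{L^\perp}y|^2
\]
is a continuous function on the Grassmannian bundle, so varifold convergence passes the $(n-1,1/j)$ condition to the limit. More precisely, we apply it on $B_{4-\eta}$ for each $\eta>0$, since the open-ball integral is only lower semicontinuous, and this suffices. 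By Lemma~\ref{lem:(k,0)-symmetry-cone}, $M_\infty$ is a cone in $B_{4-\eta}$ which is translation invariant along $L$. Hence $M_\infty = L\times C$, where $C$ is a one-dimensional $\modt$ area-minimizing cone in $L^\perp$, i.e. a finite union of rays from the origin.

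\textbf{Step 2: the one-dimensional cone is a multiplicity-one line.} Mod $2$, the boundary condition forces an even number of rays, each of multiplicity one, since multiplicity $2$ is zero mod $2$. Minimality mod $2$ forces the configuration to be exactly two opposite rays. With $2k\ge4$ rays, pairing them into segments across the origin can always be shortened locally (the classical argument for the one-dimensional case, cf. \cite{Morgan1982}). Two non-opposite rays can be shortcut by a chord. So $M_\infty$ is a multiplicity-one $n$-plane $P\supset L$ with density $1$ everywhere.

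\textbf{Step 3: Allard and upgrade.} Since $|M_j|\to\mathcal H^n\mres P$ as measures on $B_{4-\eta}$, for every $x\in \ol B_{63/64}$ and a fixed radius $\rho$ (say $\rho=1/100$), the density ratios of $M_j$ at nearby points at scale $\rho$ are eventually below $1+\delta_{Allard}$. The support of $M_j$ also lies in a thin slab around $P$, by the monotonicity lower bound together with Hausdorff convergence of supports, which follows from the density lower bound. Allard's theorem then shows that $M_j$ is a single $C^{1,\alpha}$ graph over $P$ with small norm on $B_{63/64+\rho/2}$. We must rule out extra sheets: they would carry mass at least $\ome_n(\rho/2)^n$ by monotonicity and contradict the mass convergence. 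The minimal surface system and Schauder estimates upgrade this to $C^2$ bounds, giving $|A_{M_j}|\le c$ on $B_{63/64}$, which is the desired contradiction.

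The main obstacle is Step 2, the classification, together with the passage from varifold to current convergence with no loss of multiplicity. This is exactly where the $\modt$ hypothesis enters, since integral minimizers could converge to a double plane. We handle it with the mass-continuity part of the mod-$2$ compactness theorem, rather than varifold compactness alone.
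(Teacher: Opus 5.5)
Your proposal is correct and follows essentially the same route as the paper. Both arguments go by contradiction and compactness: pass the $(n-1,\eps_j)$-symmetry to the limit, get a cone that splits off $L$ via Lemma~\ref{lem:(k,0)-symmetry-cone}, classify the one-dimensional mod $2$ minimizing cone as a multiplicity-one line, and conclude with Allard's theorem and elliptic regularity. You also invoke mass continuity for mod $2$ minimizers explicitly, which guarantees that the varifold limit is the multiplicity-one varifold of the current limit; the paper uses this point only implicitly.
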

\begin{proof}
    The proof is by contradiction. 
    If no such $\eps_{reg}$ existed, we could find a sequence of $n$-dimensional $\modt$ area-minimizing currents $M_i$ which are $(n-1,\eps_i)$-symmetric with respect to $(n-1)$-planes $L_i$ in $B_4$ with $\eps_i\downarrow 0$. 
    Taking a subsequence if necessary and passing to the limit, we see that there exists a $\modt$ area-minimizing current $M$ in $B_4$ with $\ptl M \mres B_4=0$ such that $M_i\to M$ as currents and as varifolds. Moreover by compactness of the Grassmannian we may assume $L_i\to L$ for some $(n-1)$-plane $L$.
    Passing the $(n-1,\eps_i)$-symmetry to the limit shows that $M$ satisfies the hypotheses of Lemma~\ref{lem:(k,0)-symmetry-cone} and hence $M$ is a cone with spine containing $L$. Thus $M$ splits as a product $M = \bracket{L} \times M_0$ with $M_0$ a $1$-dimensional $\modt$ area-minimizing cone in $L^\perp$. 
    But any such $M_0$ must be a multiplicity 1 line, so that $M$ is  an $n$-plane with multiplicity 1. 
    Since $M_i\to M$ we may apply Allard's regularity theorem and elliptic regularity to reach a contradiction for large enough $i$.
\end{proof}

\subsection{Annular Decomposition}
\label{subsec:annular-decomposition}

We now introduce the notion of annular region, which is where most of the analysis will take place. 
We refer to the appendices for the outlines of the proofs.

Let us first recall the notion of points being quantitatively linearly independent.
\begin{defn}[Effective General Position]
    We say that a collection of points 
    $\{x_0,\ldots, x_{k}\}\subset B_r(x)$ is in $\tau$-general position if for each $1\leq j\leq k$ we have 
    \begin{equation*}
        d(x_j-x_0, \Span\{x_1-x_0,\ldots, x_{j-1}-x_0\})>\tau r.
    \end{equation*}
\end{defn}

The definition of annular region  captures the idea of a current being approximately conical at many scales around an $(n-2)$-dimensional subspace. 
We will only introduce the top-dimensional stratum here since it is most relevant to Theorem~\ref{thm:main-curvature}. In the appendices we will also introduce the definitions of annular regions around the lower-dimensional strata. 

\begin{defn}[$\delta$-annular region] \label{def:annular-region}
    Let $M$ be an area-minimizing current $\modt$ in $B_{4r_a}(x_a)$ with $\ptl M \mres B_{4r_a}(x_a)=0$ and $|M|(B_{4r_a}(x_a))\leq \Lam$. Let $T^{n-2}\subset B_{4r_a}(x_a)$ be an $(n-2)$-dimensional topological submanifold of $\R^{n+m}$, called the approximate singular set.
    Let $r_x:T^{n-2}\to \R_{\geq 0}$ be a nonnegative function satisfying $\Lip(r_x)\leq 2$, called the symmetry scale.
    We call $\Ann:=B_{r_a}(x_a)\setminus \ol B_{r_x}(T^{n-2})$ a $\delta$-annular region if the following hold:
    \begin{itemize}
        \item [(a1)] $T^{n-2}$ is $\delta$-graphical on $B_{r_x}(x)$ for all $x\in T^{n-2}$;
        \item [(a2)] For all $x\in T^{n-2}$ and $r>r_x$ such that $B_{r}(x)\subset B_{r_a}(x_a)$, there exists a $(n-2)$-dimensional affine subspace $L^{n-2}_{x,r}$ such that
        \begin{equation}
            d_{H}(L_{x,r}^{n-2}\cap B_r(x),T^{n-2}\cap B_r(x))<\delta r.
        \end{equation}
        \item [(a3)] For all $x\in T^{n-2}$ and $r>r_x$ such that $B_r(x)\subset B_{r_a}(x_a)$, we have that $M$ is $(n-2,\delta)$-symmetric on $B_r(x)$, but not $(n-1,\eps')$-symmetric on $B_r(x)$, where $\eps' = \eps'(n,m,\eps_{reg}) = \eps'(n,m,\Lam)$ is the constant in Theorem~\ref{A:thm:annular-decomp} when we take $\eps=\eps_{reg}$.
        \item [(a4)] For each $x\in T^{n-2}$, we have 
        \begin{equation}
            |\theta_{2r_a}(x)-\theta_{r_x}(x)|<\delta.
        \end{equation}
    \end{itemize}
\end{defn}
Here we say a submanifold $T^k\subset \R^{n+m}$ is $\delta$-graphical on $B_r(x)$ if there is an affine subspace $L_x$ with $d(x,L_x)<\delta r$ and a map $\mathfrak t_x:L_x\cap B_r(x)\to L_x^\perp$ such that $T^k\cap B_r(x) = \graph(\mathfrak t_x)$ and $|\mathfrak t_x| + r|\nabla \mathfrak t_x| + r^2 |\nabla^{(2)}\mathfrak t_x|\leq \delta r$.
We will often denote $\cH^{n-2}_T = \cH^{n-2}\mres T$.

It is of crucial importance in the proof of Theorem~\ref{thm:main-curvature} that $\cH^{n-2}_T$ satisfies the Ahlfors regularity below. For our purposes the rectifiability is of secondary importance, but we state it for completeness, referring again to the appendices for the expanded statements and proofs.

\begin{thm}[Annular Region Structure Theorem] \label{thm:annular-structure}
    Let $M$ be an area-minimizing current $\modt$ in $B_{8}\subset \R^{n+m}$ with $\ptl M \mres B_{8}=0$ and $|M|(B_8)\leq \Lam$, and $\Ann:=B_{4}\setminus \ol B_{r_x}(T^{n-2})$ a $\delta$-annular region.
    If $\delta\leq \delta(m,n,\Lam)$, then
    \begin{itemize}
        \item [(i)] (Volume) For all $x\in T^{n-2}$ and $r>0$ such that $B_{4r}(x)\subset B_4$ we have the Ahlfors regularity estimate
        \begin{equation}
            (1-C\sqrt{\delta})\ome_{n-2}r^{n-2}\leq \cH^{n-2}(B_r(x)\cap T^{n-2})\leq (1+C\sqrt{\delta})\ome_{n-2}r^{n-2},
        \end{equation}
        where $C=C(n,m,\Lam)>0$.

        \item [(ii)] (Rectifiability) The approximate singular set $T^{n-2}$ is rectifiable.
    \end{itemize}
\end{thm}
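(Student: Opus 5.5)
The plan follows the Cheeger--Jiang--Naber/Naber--Valtorta scheme for Ahlfors regularity of the approximate singular set. Rectifiability (ii) then follows at once from (a1): $T^{n-2}$ is locally a $C^{1}$ graph over an $(n-2)$-plane on each ball $B_{r_x}(x)$ with $r_x>0$. On the set $\{r_x=0\}$, (a2) gives $\delta$-Reifenberg flatness at every scale, so this piece is bi-Hölder to $\R^{n-2}$. Combined with the summability of the tilts established below, it is in fact bi-Lipschitz, and hence rectifiable. The bulk of the work is therefore (i), and I would prove it in three steps.

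\textbf{Step 1 (pinching controls tilting).} I first show that the best approximating planes $L_{x,s}$ from (a2) cannot tilt much between consecutive scales unless density is lost. Fix $x\in T$ and scales $s$ with $r_x<s\le r$. By (a3), $M$ is $(n-2,\delta)$-symmetric on $B_s(x)$ with respect to some $(n-2)$-plane $V_{x,s}$, but it is not $(n-1,\eps')$-symmetric. The latter gives a quantitative nondegeneracy: the tangent planes of $M$ cannot all nearly contain a fixed $(n-1)$-plane.

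Using this, I would run a cone-splitting argument. If $y,z\in T\cap B_s(x)$ are pinched, meaning $\theta_{2s}-\theta_{s/8}$ is small at $y$ and $z$, then Proposition~\ref{prop:monotonicity} gives that $\pi^\perp(\cdot-y)$ and $\pi^\perp(\cdot-z)$ are small in $L^2$. Subtracting the two shows that $\pi^\perp$ restricted to $y-z$ is small. Hence $M$ is almost symmetric along $\mathrm{span}(V_{x,s},y-z)$, and nondegeneracy forces $y-z$ to lie close to $V_{x,s}$.

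The quantitative outcome I aim for is the $L^2$-best-plane estimate
\begin{equation*}
\beta_2(x,s)^2 := s^{-n}\int_{B_s(x)} s^{-2}d(y,L_{x,s})^2\,d\mu(y) \le C s^{-(n-2)}\int_{B_s(x)}\bigl(\theta_{2s}(y)-\theta_{s/8}(y)\bigr)\,d\mu(y),
\end{equation*}
for any measure $\mu$ on $T$ that is Ahlfors regular at scale $s$. This is the analogue of the Naber--Valtorta $L^2$ best subspace estimate. Its proof uses the almost-conical identity from Proposition~\ref{prop:monotonicity} together with the covariance-matrix argument: the eigenvectors of the second-moment matrix of $\mu$ are almost symmetry directions, and the non-$(n-1,\eps')$-symmetry rules out an $(n-1)$-th large eigenvalue.

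\textbf{Step 2 (summing tilts via telescoping).} Next I apply a discrete Reifenberg theorem of Naber--Valtorta type to $\mu=\cH^{n-2}_T$, working inductively from the scale $r_x$ upward. The hypothesis to verify is
\begin{equation*}
\sum_{r_x\le 2^{-j}\le r}\int_{B_r(x)}\beta_2(y,2^{-j})^2\,d\mu(y)\le C\delta\, r^{n-2}.
\end{equation*}
Inserting the bound from Step 1 and swapping sums, the sum over dyadic scales of $\theta_{2s}-\theta_{s/8}$ telescopes. By (a4) it is therefore bounded by $C|\theta_{2r_a}(y)-\theta_{r_y}(y)|<C\delta$. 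The bounded overlap of the dyadic windows only changes constants.

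\textbf{Step 3 (upgrading to sharp constants).} The Reifenberg theorem gives $\mu(B_r(x))\le C r^{n-2}$, and the bi-Lipschitz parametrization gives $\mu(B_r(x))\ge c\,r^{n-2}$. To sharpen these to $1\pm C\sqrt\delta$, I would use that the graph map is $(1+C\sqrt\delta)$-bi-Lipschitz. The total tilt is bounded by the square root of the $\beta$-sum (Cauchy--Schwarz), hence by $\sqrt\delta$. Near scale $r_x$, the graphical bound (a1) with constant $\delta$ handles the remaining region directly.

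The main obstacle is Step 1: proving the $\beta_2$ estimate using only $(n-2,\delta)$-symmetry and non-$(n-1,\eps')$-symmetry rather than exact cones. This requires the induction to run at every scale simultaneously, since the measure bounds needed in Step 1 are supplied by Step 2 at smaller scales. I would handle this by the standard bootstrap: assume the upper bound at scales below $s$, derive the estimate at scale $s$, and close the induction.
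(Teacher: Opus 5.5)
\textbf{Gap: Step 3 and the rectifiability argument rest on an unavailable bi-Lipschitz parametrization.}

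Your overall architecture is the paper's. The paper reduces the statement to the varifold Annular Structure Theorem~\ref{A:thm:annular-structure}, whose only varifold-specific input is the $L^2$ Subspace Approximation Theorem~\ref{A:thm:L2-subspace-approx} (your Step 1). It then follows Chow--Jiang--Naber: the $\beta^2$-budget telescoped via (a4) is fed into the $W^{1,p}$/rectifiable Reifenberg theorem of Naber--Valtorta (your Step 2).

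The gap is in your Step 3 and in your rectifiability argument on $\{r_x=0\}$. Both rest on a $(1+C\sqrt\delta)$-bi-Lipschitz parametrization that you never produce and that the available information does not give. Your justification, ``total tilt $\le$ square root of the $\beta$-sum by Cauchy--Schwarz'', is false. Over $N$ dyadic scales Cauchy--Schwarz only gives $\sum_j\beta_j\le\sqrt N\,(\sum_j\beta_j^2)^{1/2}$, and $N\sim\log(r/r_x)$ is unbounded, indeed infinite where $r_x=0$. For example, $\beta_j=\sqrt\delta/j$ has square-sum at most $C\delta$ but a divergent sum, so the tilts of $L_{x,2^{-j}}$ need not be summable. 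Moreover, Step 2 only controls $\int_{B_r(x)}\sum_s\beta(y,s)^2\,d\mu(y)$. That is square-summability in $L^1(\mu)$, not at every point, which is too weak for Toro-type bi-Lipschitz results.

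\textbf{Fix.} Use the full strength of the Reifenberg theorem rather than only ``$\mu(B_r(x))\le Cr^{n-2}$''. Take a $\delta$-Reifenberg flat set, which (a1) and (a2) give you, satisfying the integrated $\beta^2$ hypothesis with parameter $\delta'$. For such a set the Naber--Valtorta $W^{1,p}$-Reifenberg theorem directly yields the two-sided bound $(1\pm\eps(\delta'))\ome_{n-2}r^{n-2}$, and it also yields rectifiability. Your Step 2 supplies the hypothesis with $\delta'$ of order $\sqrt\delta$, which is exactly where the $\sqrt\delta$ in (i) comes from. This gives (i) and (ii) at once, including on the part of $T$ where $r_x=0$. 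No bi-Lipschitz parametrization and no control of total tilt are needed.

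\textbf{Minor points.} Your $\beta_2$ should be normalized by $s^{-(n-2)-2}$ rather than $s^{-n-2}$ for the inequality to scale correctly. The $L^2$ subspace estimate holds for an arbitrary finite measure $\mu$. Ahlfors regularity is needed only when you swap sums in Step 2, and that is where the induction on scales enters.
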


The final ingredient will allow us to cover the support of $M$ by $(n-1, \eps_{reg})$-symmetric regions (which satisfy Proposition~\ref{prop:symmetry-eps-regularity}) and annular regions (which will be treated by Theorem~\ref{thm:annular-curvature}). 
Moreover, the number of these regions are controlled (see (iii) below), so that we may sum the resulting estimates.

\begin{thm}[Annular Decomposition Theorem] \label{thm:annular-decomposition}
 Let $M$ be an area-minimizing current $\modt$ in $B_{64}\subset \R^{n+m}$ with $\ptl M  \mres B_{64}=0$ and $|M|(B_{64})\leq \Lam$. Then, for $\delta\leq \delta(n,m,\Lam)$ there exists $C=C(n,m,\Lam,\delta)>0$ such that we can write
 \begin{equation}
     \begin{aligned}
         B_1 \subset \sing(M)  \cup \bigcup_b B_{r_b}(x_b)\cup \bigcup_a \left(\Ann_a\cap B_{r_a}(x_a)\right)\\
         \sing(M)\cap B_1 \subset  \sT \cup \td \sS := \bigcup_a\left( T_a\cap B_{r_a}(x_a)\right)\cup \td \sS ,
     \end{aligned}
 \end{equation}
 where 
 \begin{itemize}
     \item [(i)] Each  $\Ann_a\subset B_{16r_a}(x_a)$ is a $\delta$-annular region with approximate singular set $T_a$;
     \item [(ii)] Each $B_{16r_b}(x_b)$ is $(n-1,\eps_{reg})$-symmetric;
     \item [(iii)] We have $\sum_{a} r_a^{n-2} + \sum_b r_b^{n-2}\leq C$;
     \item [(iv)] $\sT = \bigcup_a\left( T_a\cap B_{r_a}(x_a)\right)$ is a union of $(n-2)$-dimensional submanifolds with $\cH^{n-2}(\sT)\leq C$;
     \item [(v)] The residual singular set $\td \sS $ has vanishing measure $\cH^{n-2}(\td \sS)=0$.
 \end{itemize}

\end{thm}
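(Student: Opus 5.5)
The plan is to follow the covering scheme of Cheeger--Jiang--Naber and Naber--Valtorta. We do a multi-scale covering of $B_1$ in which each ball gets classified by its symmetry and by how much its density drops. The decomposition is built inductively through a tree of coverings, and the content-type bound (iii) comes from the packing estimates of quantitative stratification.

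\textbf{Step 1: classify balls at a given scale.} Fix $\delta$ and choose $\eps'=\eps'(n,m,\eps_{reg})$ small. For a ball $B_r(x)$ we use the regularized density $\theta$ of Definition~\ref{def:regularized-mass}, which is monotone by Proposition~\ref{prop:monotonicity}. Assign each ball one of four types:
\begin{itemize}
\item a \emph{$b$-ball}, if $B_{16r}(x)$ is $(n-1,\eps_{reg})$-symmetric;
\item an \emph{$a$-ball}, if it is $(n-2,\delta)$-symmetric, not $(n-1,\eps')$-symmetric, and the density is pinched, meaning $|\theta_{2r}(y)-\theta_{\gamma r}(y)|<\delta$ on the relevant points;
\item a \emph{$c$-ball}, if it is not $(n-2,\delta)$-symmetric, so it is effectively $(n-3)$-stratum;
\item a \emph{$d$-ball}, if there is definite density drop somewhere.
\end{itemize}
A cone-splitting and compactness argument relates these alternatives. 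If a ball has $n-1$ points in $\tau$-general position at which the density is pinched to $\delta'$, then it is $(n-2,\delta)$-symmetric. The proof compares with the limit cone, using Lemma~\ref{lem:(k,0)-symmetry-cone} and the monotonicity identity. If it is moreover $(n-1,\eps')$-symmetric, then Proposition~\ref{prop:symmetry-eps-regularity} applies and the ball is of type $b$. Otherwise the pinched points cluster $\delta r$-close to an affine $(n-2)$-plane.

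\textbf{Step 2: build the annular regions inside $a$-balls.} Inside an $a$-ball we construct $T$ and $r_x$. We recursively cover the $\delta r$-neighborhood of the approximate plane at scales $2^{-j}r$. At each scale and point we stop once the ball stops being $(n-2,\delta)$-symmetric-and-pinched, and we define $r_x$ as that stopping scale. The set $T$ is obtained by gluing the best approximating planes $L_{x,s}$ with a partition of unity, in the manner of the Reifenberg construction. Conditions (a1)--(a4) then follow from the construction, with $\Lip(r_x)\le 2$ coming from a standard regularization. The balls $B_{r_x}(x)$ at the inner boundary become new $b$-, $c$-, or $d$-balls, and we recurse into them.

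\textbf{Step 3: the packing estimate, which is the main obstacle.} We must show that the inner boundary balls satisfy $\sum r_x^{n-2}\le (1+C\sqrt\delta)r^{n-2}$, or better. This is essentially Theorem~\ref{thm:annular-structure}(i) applied to the discrete measure $\sum r_x^{n-2}\delta_x$. To prove it, I would use the Reifenberg-type $L^2$ best-approximation estimate: the $\beta_2$-numbers of the measure are controlled by the density drop $\theta_{r}-\theta_{r/2}$, which follows from \eqref{eq:monotonicity-pinching}. Since the drop sums to at most $\delta$, the discrete Reifenberg theorem gives the bound.

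\textbf{Step 4: the remaining recursion.} The $c$-balls are handled by the standard $(n-3)$-stratum covering, which gives packing bounds of order $r^{n-3}$. Each $d$-ball, meanwhile, loses a definite amount of density, so $d$-balls can occur at most $C(\Lam,\delta)$ times along any chain. Iterating the construction therefore produces a total content $\sum r_a^{n-2}+\sum r_b^{n-2}\le C$, which is (iii). Item (iv) follows from (iii) together with the Ahlfors bound.

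\textbf{Step 5: the residual set.} The residual set $\td\sS$ consists of points lying in infinitely many nested balls, or in $c$-ball chains of arbitrarily small scale. Its $\cH^{n-2}$-content tends to zero along the recursion, which gives (v).
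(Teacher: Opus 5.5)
Your architecture is correct: it is the Cheeger--Jiang--Naber covering that the paper relies on. The paper's own proof is a one-line reduction. It applies the stationary-varifold decomposition, Theorem~\ref{A:thm:annular-decomp}, with $k=n-2$ and $\eps=\eps_{reg}$; that theorem is proved by deferring to \cite[Chapter 9]{ChowJiangNaber} after checking the needed properties of $\theta_r$. It then uses Proposition~\ref{prop:symmetry-eps-regularity} to identify $\sS^{n-2}_{\eps_{reg}}\cap B_1$ with $\sing(M)\cap B_1$.

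However, the recursion in your Steps 1 and 4, which has to produce (iii) and (v), fails as stated. A ball that is not $(n-2,\delta)$-symmetric at its own scale is not ``effectively $(n-3)$-stratum.'' Take $M=\R^{n-2}\times C$ with $C$ a $\modt$-minimizing pair of transverse $2$-planes, and a ball $B_r(x)$ whose center is at distance $r/2$ from the spine. This ball is not $(n-2,\delta)$-symmetric, because the radial term sees the offset of $x$ from the spine. Yet it contains a piece of $\sing(M)$ of $\cH^{n-2}$-measure comparable to $r^{n-2}$; for $n=2$ your ``$(-1)$-stratum covering'' would simply discard a singular point. So $c$-balls cannot be closed off by an $(n-3)$-dimensional packing bound. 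Similarly, a $d$-ball defined by ``definite density drop somewhere'' need not lower the density of its descendants. Hence the claim that $d$-steps occur only $C(\Lam,\delta)$ times along a chain is unjustified.

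The missing mechanism is to run the recursion at a fixed density level and to split points rather than balls. Let $\bar\theta$ be the supremum of $\theta$ over the parent ball at its scale, and call $y$ pinched if $\theta_{\gamma r}(y)>\bar\theta-\eta$. Quantitative cone splitting then gives a dichotomy.
\begin{itemize}
\item If the pinched set contains $n-1$ points in $\tau$-general position, $M$ is $(n-2,\delta)$-symmetric on balls centered at pinched points. You then build an annular region around the pinched set, or stop at a $b$-ball.
\item Otherwise the pinched set lies in the $\tau r$-neighborhood of an $(n-3)$-plane. It is covered by $C(n)\tau^{3-n}$ balls of radius $\tau r$, of total $(n-2)$-content at most $C(n)\tau r^{n-2}$, and you recurse into these at the same level $\bar\theta$.
\end{itemize}
Unpinched points are covered by balls of radius $\gamma' r\ll\gamma r$ centered at them. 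By the spatial Lipschitz bound for $\theta_r(x)$ in the appendix and monotonicity, these balls lie at least $\eta/2$ below $\bar\theta$. They therefore start a new stage, and there are at most $C\Lam/\eta$ stages along any chain. Choosing $\tau$ small relative to the packing constant for the inner balls of an annular region (your Step 3) makes the content within a stage sum geometrically. This gives (iii), and shows that points lying in infinitely many nested balls form an $\cH^{n-2}$-null set, which is (v).

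Two further points need fixing.
\begin{itemize}
\item You never verify the inclusions involving $\sing(M)$. You must show that $b$-balls and annular regions contain no singular points. For $b$-balls this is Proposition~\ref{prop:symmetry-eps-regularity}; for annular regions use it together with quantitative dimension reduction, or Lemma~\ref{lem:local-annular-geometry}(iii). Conversely, every uncovered point is never $(n-1,\eps')$-symmetric at any scale, hence singular, since regular points are asymptotically flat. This is exactly the identity $\sS^{n-2}_{\eps_{reg}}\cap B_1=\sing(M)\cap B_1$ that the paper uses.
\item In Step 3, the $\beta_2$-bound of Theorem~\ref{A:thm:L2-subspace-approx} does not follow from the monotonicity identity alone. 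It uses essentially that the ball is not $(n-1,\eps')$-symmetric.
\end{itemize}
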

\begin{proof} 
    This follows from Theorem~\ref{A:thm:annular-decomp} by taking $\eps=\eps_{reg}$. 
    On the other hand Proposition~\ref{prop:symmetry-eps-regularity} implies $\sS^{n-2}_{\eps_{reg}}=  \sing(M)$  which yields the statement.
\end{proof}

\section{Annular Region Curvature Estimate}

We now state the main estimate of this paper, which says that the $L^2$ norm of the second fundamental form is uniformly small on annular regions. 

\begin{thm}\label{thm:annular-curvature}
    Let $M$ be an $n$-dimensional area-minimizing current $\modt$ in $B_{32}$ with $\ptl M\mres B_{8}=0$ and $|M|(B_{32})\leq \Lambda$. Let $\Ann=B_{16}\setminus \ol B_{r_x}(T^{n-2})$ be a $\delta$-annular region and $\eps>0$. Then, for $\delta<\delta(n,m,\Lambda,\eps)$ we have 
    \begin{equation}
        \int_{B_{1}\cap \Ann} |A_M|^2 d|M|\leq C(n,m,\Lambda)(\eps + \delta).
    \end{equation}
\end{thm}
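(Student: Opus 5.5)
We will cover $B_1\cap\Ann$ by a Whitney-type collection of balls $B_{s}(y)$, with $y\in\Ann$ and $s=c\,d(y,T^{n-2})$ for a small dimensional constant $c$. On each such ball we bound $\int |A_M|^2$ by a mollified density drop of nearby points of $T^{n-2}$. We then integrate against $\cH^{n-2}_T$ and telescope across scales, using Ahlfors regularity (Theorem~\ref{thm:annular-structure}(i)).

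\textbf{Pointwise control.} For $y\in\Ann\cap B_1$, let $x\in T^{n-2}$ be a nearest point and $r=|y-x|\geq r_x$. By (a2)--(a3), $M$ is $(n-2,\delta)$-symmetric but not $(n-1,\eps')$-symmetric on $B_{4r}(x)$. We argue by contradiction and compactness, as in Proposition~\ref{prop:symmetry-eps-regularity}. Limits are $\modt$ minimizing cones $\bracket{\R^{n-2}}\times C$ with $C$ a $2$-dimensional $\modt$ minimizing cone, hence a finite union of planes meeting only at the origin (by \cite{Morgan1982}). Allard's theorem then gives that $M$ is smooth on $B_{r/2}(y)$ with $|A_M|^2\leq C\eps r^{-2}$, once $\delta<\delta(\eps)$, and likewise $|\nabla A_M|\leq C\sqrt{\eps}\,r^{-2}$. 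This weak estimate is used twice. First, it directly handles the bounded number of scales $r\in[r_x,Kr_x]$ near the inner boundary. Integrating there over $\cH^{n-2}_T$ with the Ahlfors bound gives a contribution $\leq C\eps$. Second, it controls the potential term in the Jacobi equation.

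\textbf{Jacobi fields and density drop.} Fix $x\in T$ and the affine plane $L=L_{x,4r}$. Consider the normal fields $Z_0=\pi^\perp_{T_zM}(z-x)$ and $Z_i=\pi^\perp_{T_zM}e_i$ for an orthonormal basis $e_i$ of $L$. These are normal parts of Killing/conformal fields, so they are Jacobi fields, $\Delta^\perp_M Z+\td A_M(Z)=0$. By Gauss--Weingarten, $\nabla^\perp_v Z_0=-A_M(v,\pi_{T_zM}(z-x))$ and $\nabla^\perp_v Z_i=-A_M(v,\pi_{T_zM} e_i)$. On $B_{r/2}(y)$ the tangent plane is close to a plane containing $L$ and the radial direction of the $2$-dimensional cross-section. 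So the vectors $\pi_{T_zM}(z-x)/r$ and $\pi_{T_zM}e_i$ span $T_zM$ quantitatively, and we get
\[
|A_M|^2\leq C\Bigl(r^{-2}|\nabla^\perp Z_0|^2+\sum_i|\nabla^\perp Z_i|^2\Bigr).
\]
A Caccioppoli estimate from the Jacobi equation, with $|\td A_M|\leq C\eps r^{-2}$ absorbed, gives
\[
\int_{B_{s}(y)}|A_M|^2\leq C r^{-2}\int_{B_{2s}(y)}\Bigl(r^{-2}|Z_0|^2+\sum_i|Z_i|^2\Bigr).
\]
The right-hand side is at most $C r^{n-2}$ times the $(n-2)$-symmetry defect on $B_{4r}(x)$. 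We rewrite the defect in terms of density drops: by \eqref{eq:monotonicity-pinching}, $r^{-n-2}\int_{B_{2r}(x')}|\pi^\perp(z-x')|^2\leq C(\theta_{4r}(x')-\theta_{2r}(x'))$. Also $\pi^\perp e_i$ is expressed through differences $\pi^\perp(z-x')-\pi^\perp(z-x'')$ for points $x',x''\in T\cap B_{r}(x)$ in $\tau$-general position, which (a2) provides. Averaging over $x'\in T\cap B_r(x)$ then gives
\[
\int_{B_s(y)}|A_M|^2\leq C r^{-2}\int_{T\cap B_{2r}(x)}\bigl(\theta_{8r}(x')-\theta_{r}(x')\bigr)\,d\cH^{n-2}(x').
\]

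\textbf{Summation; main obstacle.} Sum over the Whitney balls at dyadic scales $r=2^{-k}$. At scale $r$, the balls cover $B_1\cap\{d(\cdot,T)\sim r\}$ with bounded overlap and each point of $T$ is counted by $\lesssim r^{-(n-2)}\cdot r^{n-2}$ balls, so the $r^{-2}$ combines with the Ahlfors bound correctly (after normalizing, the weight is $O(1)$ per scale). Telescoping $\sum_k(\theta_{8\cdot2^{-k}}-\theta_{2^{-k}})(x')\leq 3(\theta_{16}-\theta_{r_{x'}})(x')<3\delta$ by (a4), and integrating over $T\cap B_2$, gives a total of $C\delta$. Adding the $C\eps$ from the near-boundary scales proves the theorem. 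I expect the main obstacle to be the middle step, specifically the spanning/translation part. Making $\pi^\perp e_i$ controlled by density drops requires a uniform quantitative general position of $T$ at every scale, from (a1)--(a2) together with the Ahlfors lower bound. It also requires the constants in the Gauss--Weingarten lower bound, which come from the compactness classification, to be independent of the scale. I would first try the one-sided averaging over $x'$ described above, and fall back on the compactness lemma if the direct algebra is too lossy.
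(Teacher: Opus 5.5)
Your overall architecture (Whitney balls at distance $\sim r$ from $T$, radial and translational Jacobi fields, Caccioppoli with the weak bound $|A_M|^2\le C\varepsilon r^{-2}$, monotonicity, telescoping via (a4), and the weak bound on the boundedly many scales near $r_x$) matches the paper. However, the central pointwise step is not correctly justified.

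\textbf{The missing use of minimality.} You claim that $\pi_{T_zM}(z-x)/r$ and $\pi_{T_zM}e_1,\dots,\pi_{T_zM}e_{n-2}$ span $T_zM$ quantitatively, but these are $n-1$ vectors in an $n$-dimensional space. The fields $\nabla^\perp_M Z$ only give you $A_M(\cdot,u)$ for $u$ in the $(n-1)$-plane $U_z=\pi_{T_zM}(L)\oplus\R e(z)$, where $e(z)$ is the normalized component of $\pi_{T_zM}(z-x)$ orthogonal to $\pi_{T_zM}(L)$. By symmetry of $A_M$, this controls every component except $A_M(f,f)$, where $f$ is the unit vector of $T_zM$ orthogonal to $U_z$ (the angular direction of the two-dimensional cross-section). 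That is exactly the component a non-flat cone carries. On $\R^{n-2}\times C$, with $C$ the cone over a non-geodesic curve in a sphere, all of your $Z$'s vanish identically while $A(f,f)\neq 0$.

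Your inequality $|A_M|^2\le C(r^{-2}|\nabla^\perp_M Z_0|^2+\sum_i|\nabla^\perp_M Z_i|^2)$ is nevertheless true, but only because $M$ is minimal. Indeed $\operatorname{tr}A_M=0$ gives $A_M(f,f)=-\sum_j A_M(u_j,u_j)$ for an orthonormal basis $(u_j)$ of $U_z$, hence $|A_M|^2\le n\sum_j|A_M(\cdot,u_j)|^2$. Together with $|\pi_{T_zM\cap L^\perp}(z-x)|\ge cr$, this yields the bound (Lemma~\ref{lem:pointwise-curvature}(iii) and Lemma~\ref{lem:local-annular-geometry}(iii)). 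This is where minimality enters the argument, and neither sharper linear algebra nor your compactness fallback can avoid it. A contradiction argument for this pointwise inequality must exclude a limiting form supported on $f\otimes f$, and only the trace condition does that.

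\textbf{Errors that are not density drops.} Two further points matter, because any error that is not itself a density drop cannot be summed over the unboundedly many scales.

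First, the translation directions should be exact differences of points of $T$, as in the paper. Take $L=\Span\{w_j-w_0\}$ with $w_j\in T$, so that $\pi^\perp_{T_zM}(w_j-w_0)=Z_{w_0}-Z_{w_j}$ with $Z_w(z)=\pi^\perp_{T_zM}(z-w)$. If instead $e_i$ is an orthonormal basis of the direction of $L_{x,4r}$ and is only approximated by such differences, you get $|\pi^\perp e_i|^2\le Cr^{-2}\sum_j|\pi^\perp(w_j-w_0)|^2+C\delta^2$. The $C\delta^2$ term costs $C\delta^2r^{n-2}$ per Whitney ball, and its sum over scales is unbounded when $r_x$ is small on a large part of $T$. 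Averaging over tuples of points of $T$, rather than selecting good points by Markov's inequality as the paper does, is fine, provided $L$ and the centers move with the tuple.

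Second, your final local estimate carries a spurious factor $r^{-2}$. Both sides must scale like $r^{n-2}$, and with the extra factor the telescoping fails. Also, the piece of $T$ you average over must be small relative to $r$, with $r\ge Kr_x$, so that $\mathrm{Lip}(r_x)\le 2$ forces $r_{x'}<r$ for every averaged $x'$; otherwise (a4) does not control $\theta_{8r}(x')-\theta_r(x')$. With $T\cap B_{2r}(x)$ you only know $r_{x'}\le r_x+4r$. The paper averages over $T\cap B_{r/4}(z)$ and requires $d_Q>4r_{z_Q}$. It handles $r_{z_Q}<d_Q\le 4r_{z_Q}$ by the weak bound, as you do.
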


The proof of Theorem~\ref{thm:annular-curvature} will be split up into several parts. 
In Lemma~\ref{lem:pointwise-curvature} we use minimality to obtain a pointwise inequality for the full norm $|A_M|^2$ in terms of the other $n-1$ components. 
Then, in Section~\ref{subsec:jacobi-fields} we show how the components in these $(n-1)$ directions may be estimated by the norm of $(n-1)$ radial Jacobi fields. 
Finally, in Proposition~\ref{prop:local-annular-curvature} we show how to appropriately choose  $(n-1)$ Jacobi fields whose norm may be estimated by the volume density drop along the approximate singular set. Together with a Whitney decomposition of the annular region we obtain the proof of Theorem~\ref{thm:annular-curvature}.

\subsection{Pointwise Curvature Estimate}

\begin{lem} \label{lem:pointwise-curvature}
 Let $P^n$ be an $n$-dimensional linear subspace of $\R^{n+m}$ and $P^\perp$ its orthogonal complement. 
   Let $w_0,w_1,\ldots, w_{n-2}\in \R^{n+m}$ be in $\frac{1}{2}$-general position in $B_{r}\subset \R^{n+m}$
   and $L:= \Span\{w_{n-2}-w_0,\ldots, w_1-w_0\}$. 
   Let $L^\perp$ be the orthogonal complement for $L$ in $\R^{n+m}$. 
   Let $H= \pi_{P}(L)$ and $V = L^\perp \cap P = H^{\perp_P}$ be the orthogonal complement of $H$ in $P$. 
   Finally, let $A\in \mrm{Sym}^2(P^*)\otimes P^\perp$ be a symmetric bilinear form. 
   Suppose $\|\pi_{P^\perp}|_L\| <\frac{1}{2}$. Then, 
   \begin{itemize}
       \item [(i)] $H$ is $(n-2)$-dimensional, $V$ is $2$-dimensional, and $H\oplus V  = P$.  Moreover, $\{\pi_{P}(w_i-w_0)\}_{i=1}^{n-2}$ form a basis for $H$.
       \item [(ii)] For any orthonormal basis $\{h_i\}_{i=1}^{n-2}$ for $H$, we have 
       \begin{equation}
           \sum_{i=1}^{n-2} |A(\cdot, h_i)|^2  \leq \frac{C(n)}{r^{2}} \left( \sum_{i=1}^{n-2} |A(\cdot, \pi_P(w_i-w_0))|^2 \right)
       \end{equation}

       \item [(iii)] If moreover $\Tr_{P} A =0$, then for any orthonormal basis $\{h_i\}_{i=1}^{n-2}$ for $H$ and any unit vector $e\in V$,
       \begin{equation}
           |A|^2 \leq n \left(\sum_{i=1}^{n-2} |A(\cdot, h_i)|^2 + |A(\cdot, e)|^2\right).
       \end{equation}
       Consequently,
       \begin{equation}
           |A|^2 \leq C(n) \left(r^{-2} \sum_{i=1}^{n-2} |A(\cdot, \pi_P(w_i-w_0))|^2 + |A(\cdot, e)|^2 \right).
       \end{equation}
   \end{itemize}   
\end{lem}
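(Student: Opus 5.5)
The plan is to prove the three items in order. Items (i) and (ii) are linear algebra with quantitative constants coming from the $\frac12$-general position hypothesis. Item (iii) is an algebraic consequence of the trace-free condition.

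\emph{Item (i).} Set $v_i:=w_i-w_0$. By $\frac12$-general position, each $v_j$ has distance $>r/2$ from $\Span\{v_1,\dots,v_{j-1}\}$, and $|v_j|\le 2r$ since the $w_i$ lie in $B_r$. So the matrix expressing $v_1,\dots,v_{n-2}$ in a Gram--Schmidt orthonormal basis $\{\ell_i\}$ of $L$ is upper triangular. Its diagonal entries are $>r/2$ and its entries are bounded by $2r$. Hence $L$ is $(n-2)$-dimensional, and for every $u\in L$,
\[
|u|\le \frac{C(n)}{r}\,\Big|\sum_i c_i v_i\Big|\quad\text{whenever } u=\sum_i c_i v_i,
\qquad\text{equivalently}\qquad \sum_i |c_i|\le C(n)\,r^{-1}|u| .
\]
Since $\|\pi_{P^\perp}|_L\|<\frac12$, for $u\in L$ we get $|\pi_P u|\ge |u|-|\pi_{P^\perp}u|>\frac12|u|$. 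Thus $\pi_P|_L$ is injective, $H=\pi_P(L)$ has dimension $n-2$, and $\{\pi_P v_i\}_{i=1}^{n-2}$ is a basis of $H$. Then $H^{\perp_P}$ is $2$-dimensional, and $H\oplus H^{\perp_P}=P$. To identify $V=L^\perp\cap P$ with $H^{\perp_P}$: for $x\in P$ and $u\in L$ we have $\langle x,u\rangle=\langle x,\pi_P u\rangle$, so $x\perp L$ if and only if $x\perp H$.

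\emph{Item (ii).} Write an orthonormal $h_k\in H$ as $h_k=\pi_P u_k$ with $u_k\in L$. From the previous step, $|u_k|<2|h_k|=2$. Expanding $u_k=\sum_i c_{ki}v_i$ gives $|c_{ki}|\le C(n)r^{-1}$, and $h_k=\sum_i c_{ki}\pi_P v_i$. By linearity of $A$ in its second slot and Cauchy--Schwarz,
\[
|A(\cdot,h_k)|^2\le C(n)\,r^{-2}\sum_i|A(\cdot,\pi_Pv_i)|^2 .
\]
Summing over $k$ gives (ii). This step is the main, though modest, obstacle: one must keep the constant depending only on $n$ via the triangular structure, and not on any lower bound for the smallest singular value that might degenerate.

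\emph{Item (iii).} Complete $\{h_i\}$ by an orthonormal basis $\{e,e'\}$ of $V$, so that $\{h_1,\dots,h_{n-2},e,e'\}$ is an orthonormal basis of $P$. Then
\[
|A|^2=\sum_{a,b}|A(f_a,f_b)|^2
\]
over this basis. Every entry with some index among the $h_i$ is bounded by $\sum_i|A(\cdot,h_i)|^2$, counted at most twice by symmetry. Likewise the entries $A(e,e)$ and $A(e,e')=A(e',e)$ are controlled by $|A(\cdot,e)|^2$. The remaining entry $A(e',e')$ is handled by the trace-free condition:
\[
A(e',e')=-A(e,e)-\sum_i A(h_i,h_i),
\qquad\text{so}\qquad
|A(e',e')|^2\le (n-1)\Big(|A(e,e)|^2+\sum_i|A(h_i,h_i)|^2\Big).
\]
Collecting terms gives $|A|^2\le n\big(\sum_i|A(\cdot,h_i)|^2+|A(\cdot,e)|^2\big)$; I will check that the multiplicities stay bounded by $n$, and otherwise accept a $C(n)$. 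Combining this with (ii) yields the final inequality.
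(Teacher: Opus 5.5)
Your proposal is correct and follows the paper's proof essentially step for step. The coefficient bound $\sum_i|c_i|\le C(n)r^{-1}|u|$ you get from the triangular Gram--Schmidt matrix is the paper's lower bound $\sigma(\pi_P W)\ge c(n)r$ written in coordinates, so that singular-value bound does not degenerate either. Part (iii) is the same trace-free elimination of the remaining diagonal entry. The sharp constant $n$ does come out if you group the terms as the paper does: write $|A|^2=\sum_{g}|A(\cdot,g)|^2+|A(\cdot,e')|^2$ with $g$ ranging over $h_1,\dots,h_{n-2},e$, and then use $|A(\cdot,e')|^2\le (n-1)\sum_g\big(|A(e',g)|^2+|A(g,g)|^2\big)\le (n-1)\sum_g|A(\cdot,g)|^2$.
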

\begin{proof}   

 We verify (i) and (ii) as  an elementary application of linear algebra. For $n=2$ there is nothing to prove so we assume $n\geq 3$.
 Set $v_i:=w_{i}-w_0$ for $i=1,\ldots, n-2$. Since  $\|\pi_{P^\perp}|_L\|<\frac{1}{2}$, for every $v\in L$ we have 
 \begin{equation}
     |\pi_P v|^2 = |v|^2 - |\pi_{P^\perp} v|^2 \geq (1-\frac{1}{4})|v|^2 .
 \end{equation}
 Thus the restriction $\pi_P|_{L}$ is injective and $\dim H = \dim \pi_P(L) = \dim L = n-2$ and $\pi_P v_1,\ldots, \pi_P v_{n-2}$ form a basis for $H$. 
 For $z\in P$, we know that $z$ is orthogonal to $H$ if and only if $\langle z,\pi_P v\rangle =0$ for all $v\in L$, which is equivalent to $\langle z,v\rangle =0$ for all $v\in L$. Thus $V= H^{\perp_P} = L^\perp \cap P$, which proves (i).

For (ii),  let $W$ be the matrix with columns equal to the vectors $v_1,\ldots, v_{n-2}$. 
Since $|v_i|<2r$ and $\{w_i\}$ are in $\frac{1}{2}$-general position, the smallest singular value $\sigma(W)$ satisfies $\sigma(W) >c(n) r$ for some constant $c(n)$.
 Thus for any vector $v\in \R^{n-2}$,
\begin{equation*}
    | \pi_P W v|\geq \frac{3}{4}|Wv|\geq c(n) r |v|
\end{equation*}
and we have the estimate $\sigma(\pi_PW)\geq c(n) r$ for the smallest singular value.
For any orthonormal basis $h_1,\ldots, h_{n-2}$ of $H$, we may view $A$ as a linear map $ \ol A:H\to \Hom(P,P^\perp)$ defined by $H\ni h \mapsto A(\cdot, h)$. Viewed in this way, $|\ol A|^2 = \sum_{i=1}^{n-2} |A(\cdot, h_i)|^2$. Since $| \ol A \pi_P W|^2 = \sum_{i=1}^{n-2} |A(\cdot, \pi_P v_i)|^2$, we estimate 
\begin{equation*}
    \sum_{i=1}^{n-2} |A(\cdot, \pi_P v_i)|^2\geq \sigma(\pi_P W)^2 |\ol A|^2 \geq c(n) r^2\sum_{i=1}^{n-2}|A(\cdot, h_i)|^2
\end{equation*}
as desired. 

We now prove (iii). Let $f\in V$ be a unit vector perpendicular to $e$, so that $h_1,\ldots, h_{n-2}, e, f$ form an orthonormal basis of $P$. For simplicity set $h_{n-1} := e$. By trace-freeness,
\begin{equation*}
    A(f,f) = - \sum_{i=1}^{n-1} A(h_i, h_i).
\end{equation*}
Together with symmetry we obtain
\begin{equation*}
    |A(\cdot ,f)|^2 \leq (n-1) \sum_{i=1}^{n-1}\left( |A(f, h_i)|^2 + |A(h_i,h_i)|^2\right) \leq (n-1)\sum_{i=1}^{n-1} |A(\cdot, h_i)|^2.
\end{equation*}
Thus,
\begin{equation*}
    |A|^2 \leq \sum_{i=1}^{n-1} |A(\cdot, h_i)|^2 + |A(\cdot, f)|^2 \leq n\sum_{i=1}^{n-1} |A(\cdot, h_i)|^2.
\end{equation*}
This finishes the proof.

\end{proof}

\subsection{Jacobi Fields} \label{subsec:jacobi-fields}
Let $M$ be a smooth minimal immersion with no boundary in an open set $U\subset \R^{n+m}$ and $\nabla_M$, $\nabla_M^\perp$ denote the Levi-Civita and normal connection on $M$.  Let $\mc J_M$ be the Jacobi operator on normal vector fields to $M$, so that $\mc J_M$ has the form
\begin{equation}
   \mc J_MZ := \Delta^\perp_M Z + \td A_M(Z)
\end{equation}
for all normal vector fields $Z$, 
where 
\begin{equation}
    \Delta_M^\perp Z = \sum_{i=1}^n \left(\nabla_{M,E_i}^\perp \nabla_{M,E_i}^\perp Z - \nabla_{M, (\nabla_{M,E_i}E_i)}^\perp Z\right)
\end{equation}
and
\begin{equation}
    \td A_M(Z) := \sum_{i,j=1}^n \langle A_{M}(E_i,E_j), Z\rangle A_M(E_i,E_j)
\end{equation}
whenever $\{E_i\}$ is a local orthonormal frame for $M$. Recall that $Z$ is called a Jacobi field if $\mc J_M Z =0$.

\begin{lem} \label{lem:jacobi-radial-trans}
      Let $M^n\subset \R^{n+m}$ be an $n$-dimensional minimal immersion with no boundary in an open set $U\subset \R^{n+m}$. 
     Let $Y$ be a vector field tangent to $M$.
     Fix $x_0\in \R^{n+m}$ and consider the vector field $X(x)=x-x_0$ and let $Z(x):= \pi_{T_xM}^\perp (x-x_0)$ be the projection onto the normal space of $M$. Then $\mc J_M Z=0$ and
      \begin{equation}
          \nabla^\perp_{M, Y} Z = -A_M(Y,\pi_{T_xM}(x-x_0)) 
      \end{equation}  
\end{lem}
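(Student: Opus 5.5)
The plan is to verify the two claims separately. First we compute the normal derivative of $Z$ directly from the decomposition $X = X^T + Z$, where $X^T := \pi_{T_xM}(x-x_0)$. Then we deduce $\mc J_M Z = 0$ from the fact that $Z$ is the normal part of a variation field generated by ambient conformal motions (dilations) that preserve minimality. We can also check $\mc J_M Z=0$ by a direct computation using the derivative formula and the Codazzi equation.

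\emph{Step 1 (derivative formula).} Let $D$ denote the ambient flat connection. For $Y$ tangent to $M$ we have $D_Y X = Y$, since $X(x)=x-x_0$. Write $X = X^T + Z$ and take normal components:
\begin{equation*}
    0 = (D_Y X)^\perp = (D_Y X^T)^\perp + (D_Y Z)^\perp = A_M(Y, X^T) + \nabla^\perp_{M,Y} Z.
\end{equation*}
Here we use the Gauss formula $(D_Y X^T)^\perp = A_M(Y,X^T)$ for the tangent field $X^T$, and the definition $\nabla^\perp_{M,Y}Z = (D_Y Z)^\perp$. This gives $\nabla^\perp_{M,Y}Z = -A_M(Y, \pi_{T_xM}(x-x_0))$.

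\emph{Step 2 (Jacobi equation).} Consider the one-parameter family of dilations $\phi_t(x) = x_0 + e^t(x-x_0)$. Each $\phi_t(M)$ is again minimal, since dilation scales the area functional by a constant. Its variation field is $X$, so its normal part $Z$ is a Jacobi field: the derivative of the mean curvature along a variation with field $X$ equals $\mc J_M X^\perp$ plus a tangential reparametrization term. That term vanishes because $H\equiv 0$. Equivalently, differentiating $\vec H(\phi_t(M))=0$ in $t$ gives $\mc J_M Z = 0$.

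For a self-contained check, we can instead compute $\Delta_M^\perp Z$ in normal coordinates at a point, with $\nabla_{E_i}E_j=0$ there. Step 1 gives
\begin{equation*}
    \nabla^\perp_{E_i}\nabla^\perp_{E_i} Z = -(\nabla_{E_i}A)(E_i, X^T) - A(E_i, \nabla_{E_i}X^T).
\end{equation*}
Summing over $i$, the Codazzi equation together with $H=0$ kills the first term, because $\sum_i(\nabla_{E_i}A)(E_i,\cdot) = \nabla_{(\cdot)} H = 0$. For the second term, take tangential components of $D_{E_i}X = E_i$ to get $\nabla_{E_i}X^T = E_i + \mathcal A^Z(E_i)$, where $\mathcal A^Z$ is the shape operator, $\langle \mathcal A^Z(E_i),E_j\rangle = \langle A(E_i,E_j),Z\rangle$. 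Then
\begin{equation*}
    \Delta^\perp_M Z = -\sum_i A(E_i,E_i) - \sum_{i,j}\langle A(E_i,E_j),Z\rangle A(E_i,E_j) = -\td A_M(Z),
\end{equation*}
using $H=0$ once more. Sign conventions for the shape operator must be tracked carefully: with $\langle D_{E_i}Z, E_j\rangle = -\langle Z, A(E_i,E_j)\rangle$, the tangential part of $D_{E_i}Z$ is $-\sum_j\langle A(E_i,E_j),Z\rangle E_j$. This yields $\nabla_{E_i}X^T = E_i + \sum_j\langle A(E_i,E_j),Z\rangle E_j$, which has the sign needed above.

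The main obstacle is this sign bookkeeping in Step 2, together with the correct use of the Codazzi identity in the normal bundle. The rest is routine.
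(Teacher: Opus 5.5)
Your proof is correct and follows the same route as the paper. The derivative formula comes from taking normal components of $D_Y X = Y$ via the Gauss formula. The Jacobi equation comes from the fact that dilations about $x_0$ preserve minimality. Your supplementary direct check, using Codazzi, $H=0$ and the shape-operator identity $\nabla_{E_i}X^T = E_i + \sum_j \langle A(E_i,E_j),Z\rangle E_j$ to obtain $\Delta^\perp_M Z = -\tilde A_M(Z)$, is also correct with the signs you track. It spells out a computation the paper leaves to the standard variational argument.
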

\begin{proof}
    The flow generated by $X$ is a dilation centered at $x_0$, which preserves minimality. It follows that the normal variation field $Z$ is Jacobi. 
    Taking normal components in $\nabla_{\R^{n+m}, Y} X = Y$ gives
    \begin{equation*}
        0 = A_M(Y, \pi_{T_xM}(x-x_0)) + \nabla_{M,Y}^\perp Z
    \end{equation*}
    as desired.
\end{proof}

\begin{lem}\label{lem:jacobi-cacc}
    Let $Z$ be a normal Jacobi field on a smooth minimal immersion $M$ and $\phi$ a compactly supported Lipschitz function with $0\leq \phi\leq 1$. Suppose $M$ has no boundary in an open set containing $\spt \phi$.
    Then,
    \begin{equation}
        \int_M \phi^2 |\nabla_M^\perp Z|^2 \leq 4\int_M |\nabla_M \phi|^2 |Z|^2 +2 \int_M \phi^2 |A_M|^2 |Z|^2.
    \end{equation}
    In particular if $|A_M|\leq Ks^{-1}$ on $\spt \phi$ and $|\nabla \phi|\leq C s^{-1}$, then 
    \begin{equation}
        \int_M \phi^2 |\nabla_M^\perp Z|^2 \leq C(1+K^2)s^{-2}\int_{M\cap \spt \phi}|Z|^2.
    \end{equation}
\end{lem}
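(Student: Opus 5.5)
We test the Jacobi equation $\mc J_M Z=0$ against $\phi^2 Z$ and integrate by parts, which is legitimate since $\phi$ has compact support in a region where $M$ is a smooth immersion without boundary. Write $\langle\cdot,\cdot\rangle$ for the inner product on the normal bundle. Because $\nabla^\perp_M$ is metric, the divergence theorem on $M$ gives
\begin{equation*}
    0=\int_M \phi^2\langle \Delta^\perp_M Z + \td A_M(Z), Z\rangle
    = -\int_M \phi^2|\nabla^\perp_M Z|^2 - 2\int_M \phi\langle \nabla^\perp_{M,\nabla_M\phi} Z, Z\rangle + \int_M \phi^2 \langle \td A_M(Z),Z\rangle .
\end{equation*}
Here $\phi$ is only Lipschitz. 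This causes no trouble: we can either approximate $\phi$ by smooth functions, or note directly that $\phi^2 Z$ is Lipschitz with compact support, so the integration by parts remains valid.

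Next we bound the zeroth order term. From the definition of $\td A_M$,
\begin{equation*}
    \langle \td A_M(Z),Z\rangle=\sum_{i,j}\langle A_M(E_i,E_j),Z\rangle^2\le |A_M|^2|Z|^2
\end{equation*}
by Cauchy--Schwarz. For the cross term we use Young's inequality:
\begin{equation*}
    2\phi|\nabla_M\phi||Z||\nabla^\perp_M Z|\le \tfrac12\phi^2|\nabla^\perp_M Z|^2+2|\nabla_M\phi|^2|Z|^2 .
\end{equation*}

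Substituting both bounds into the identity and rearranging gives
\begin{equation*}
    \tfrac12\int_M\phi^2|\nabla^\perp_M Z|^2\le 2\int_M|\nabla_M\phi|^2|Z|^2+\int_M\phi^2|A_M|^2|Z|^2 .
\end{equation*}
Multiplying by $2$ yields the stated inequality with constants $4$ and $2$.

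For the second statement, we use $0\le\phi\le1$, $|\nabla_M\phi|\le|\nabla\phi|\le Cs^{-1}$, and $|A_M|^2\le K^2s^{-2}$ on $\spt\phi$. Then both integrands on the right are bounded by $C(1+K^2)s^{-2}|Z|^2$ restricted to $\spt\phi$, which gives the second inequality.

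The only point needing care is the justification of the integration by parts for a Lipschitz cutoff on a possibly noncompact immersion. It is handled by the compact support of $\phi$ inside the region where $M$ is smooth and boundaryless.
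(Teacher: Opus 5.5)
Your proof is correct and is essentially the paper's argument: test the Jacobi equation against $\phi^2 Z$, bound the zeroth-order term by $|A_M|^2|Z|^2$ via Cauchy--Schwarz, and absorb the cross term with Young's inequality. The constants $4$ and $2$ come out exactly as in the paper.
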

\begin{proof}
    Testing the Jacobi equation with $\phi^2 Z$ and integrating by parts, we obtain
    \begin{equation*}
        \int_M \phi^2 |\nabla_M^\perp Z|^2  = -2\int_M \phi  \langle \nabla_{M,\nabla_M \phi}^\perp Z,Z\rangle  + \int_M \phi^2 \langle \td A_M(Z),  Z\rangle.
    \end{equation*}
    Since $|\langle \td A_M(Z),  Z\rangle|\leq |A_M|^2 |Z|^2$, this yields
    \[
    \int_M \phi^2 |\nabla_M^\perp Z|^2 \leq 2 \int_M |\phi| |\nabla_M \phi| |\nabla_M^\perp Z| |Z| + \int_M \phi^2 |A_M|^2 |Z|^2.
    \]
    Applying Young's inequality and rearranging, we obtain the desired inequality.
\end{proof}

\subsection{Local Annular Curvature Estimate}

We will now estimate the $L^2$ norm of the second fundamental form by the density drop, integrated over the approximate singular set $T$. 
In the next section, we will use the monotonicity formula  to sum this pinching along an unbounded number of scales to prove Theorem \ref{thm:annular-curvature}.

\begin{prop} \label{prop:local-annular-curvature}
     Let $M$ be an $n$-dimensional area-minimizing current $\modt$ in $B_{32}$ with $\ptl M\mres B_{32}=0$ and $|M|(B_{32})\leq \Lambda$.
     Let $\Ann=B_{16}\setminus \ol B_{r_x}(T^{n-2})$ be a $\delta$-annular region and consider $y\in \Ann\cap B_1\cap \spt(M)$ with $r:=\dist(y,T^{n-2}) = |y-z|$ with $z\in T^{n-2}$, so that $r>r_z$.
     Then, for $\delta<\delta(n,m,\Lambda)$ we have 
     \begin{equation}
          \int_{B_{r/2}(y)}  |A_M|^2 d|M| \leq C(m,n,\Lambda) \int_{B_{r/4}(z)} |\theta_{2r}-\theta_r|(w) d \cH^{n-2}_T(w).
     \end{equation}
\end{prop}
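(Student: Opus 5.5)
The plan is to combine the pointwise estimate of Lemma~\ref{lem:pointwise-curvature} with the radial Jacobi fields of Lemma~\ref{lem:jacobi-radial-trans}. Each Caccioppoli bound from Lemma~\ref{lem:jacobi-cacc} will then be converted into density drop via the regularized monotonicity formula, Proposition~\ref{prop:monotonicity}. Finally we average over base points $w\in T\cap B_{r/4}(z)$ to get an integral against $\cH^{n-2}_T$.

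\textbf{Step 1: a priori regularity at scale $r$.} Since $r>r_z$ and $y\in\Ann$, every point of $B_{3r/4}(y)$ lies at distance $\gtrsim r$ from $T$. For $\delta$ small, (a2)--(a3) together with a contradiction/compactness argument show that $M$ in $B_{3r/4}(y)$ is close to a $\modt$ area-minimizing cone with $(n-2)$-dimensional spine near $L_{z,2r}$, away from that spine. The limit cone is $\bracket{\R^{n-2}}\times C_0$, where $C_0$ is a $2$-dimensional $\modt$ minimizing cone, hence regular away from $0$ with multiplicity one. Allard's theorem then gives that $M$ is smooth on $B_{3r/4}(y)$ with $|A_M|\le K r^{-1}$, $K=K(n,m,\Lam)$. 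This is the higher-dimensional version of \eqref{eq:intro-weak-sff}, and it supplies the hypothesis of Lemma~\ref{lem:jacobi-cacc}. It also shows that the tangent plane $P=T_xM$ satisfies $\|\pi_{P^\perp}|_{L}\|<1/2$ for any $L$ close to $L_{z,2r}$.

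\textbf{Step 2: choosing the points and applying the pointwise lemma.} By Ahlfors regularity (Theorem~\ref{thm:annular-structure}(i)) and (a2), for $\delta$ small the following holds for a set of tuples of positive $(\cH^{n-2}_T)^{n-1}$-measure, comparable to $r^{(n-2)(n-1)}$. Any tuple $w_0,\dots,w_{n-2}$ with $w_i\in B_{r/4}(z)\cap T$ and $w_i$ in a small ball around fixed reference points $p_i$, chosen in $\tfrac12$-general position on $L_{z,r}$, is in $\tfrac12$-general position, and its span $L$ is close to $L_{z,2r}$.

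Fix such a tuple and $x\in B_{r/2}(y)$. Take $e\in V=L^\perp\cap P$ as the unit vector along $\pi_P(x-w_0)$. This vector has length $\gtrsim r$ and lies within $O(\delta r)$ of $V$, so a harmless modification of Lemma~\ref{lem:pointwise-curvature}(iii) (or a direct projection) applies. By Lemma~\ref{lem:jacobi-radial-trans}, $A_M(\cdot,\pi_P(x-w_i))=-\nabla^\perp_M Z_i$ with $Z_i:=\pi^\perp_{T_xM}(x-w_i)$. Writing $\pi_P(w_i-w_0)=\pi_P(x-w_0)-\pi_P(x-w_i)$, we get
\begin{equation*}
|A_M|^2(x)\le C r^{-2}\sum_{i=0}^{n-2}|\nabla^\perp_M Z_i|^2(x).
\end{equation*}

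\textbf{Step 3: Caccioppoli and monotonicity.} Take a cutoff $\phi$ equal to $1$ on $B_{r/2}(y)$ and supported in $B_{3r/4}(y)$. Lemma~\ref{lem:jacobi-cacc} together with Step 1 gives
\begin{equation*}
\int_{B_{r/2}(y)}|\nabla^\perp Z_i|^2\le Cr^{-2}\int_{B_{3r/4}(y)}|\pi^\perp_{T_xM}(x-w_i)|^2.
\end{equation*}
Since $B_{3r/4}(y)\subset B_{3r/2}(w_i)$, estimate \eqref{eq:monotonicity-pinching} (with $\lam=3/2$ at scale $r$) bounds the right-hand side by $C r^{n-2}\,|\theta_{2r}-\theta_r|(w_i)$. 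Summing over $i$ gives
\begin{equation*}
\int_{B_{r/2}(y)}|A_M|^2\le Cr^{n-2}\sum_i|\theta_{2r}-\theta_r|(w_i).
\end{equation*}
Averaging each $w_i$ over its small ball in $T$, whose $\cH^{n-2}_T$-measure is $\sim r^{n-2}$ by Ahlfors regularity, converts $r^{n-2}|\theta_{2r}-\theta_r|(w_i)$ into $C\int_{B_{r/4}(z)}|\theta_{2r}-\theta_r|\,d\cH^{n-2}_T$, which is the claimed estimate.

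\textbf{Main obstacle.} I expect Step 1 to be the hardest part. It requires the regularity and the uniform curvature bound on $B_{3r/4}(y)$ from the annular hypotheses, including excluding higher multiplicity and checking that the compactness limit is a genuine $(n-2)$-symmetric cone regular near $y$. A secondary difficulty is handling the fact that $e$ lies only approximately in $V$ in the pointwise lemma. I would treat it by noting that the error $A(\cdot,\pi_V^\perp e)$ lies in the $H$ directions and is therefore already controlled by the first sum.
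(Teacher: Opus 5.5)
Your proposal is essentially the paper's proof. It uses compactness and Allard to get smoothness and $|A_M|\lesssim r^{-1}$ on $B_{3r/4}(y)$, then Lemma~\ref{lem:pointwise-curvature} with $n-1$ points of $T$ in $\tfrac12$-general position, the radial Jacobi fields $Z_i=\pi^\perp_{T_xM}(x-w_i)$ of Lemma~\ref{lem:jacobi-radial-trans}, Lemma~\ref{lem:jacobi-cacc}, and Proposition~\ref{prop:monotonicity}. Your two variations are fine:
\begin{itemize}
\item \emph{Compactness.} The paper uses (a4) at the chosen $w_i$, which makes the limit homogeneous about each $w_i$ and so puts the limit plane $L$ of $T$ in the spine. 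Your use of (a3) also works, but only if you apply the $(n-2,\delta)$-symmetry at several points of $T$, not just at $z$. Otherwise the symmetry plane need not be related to $T$, and you cannot conclude that $y_\infty$ lies off the spine.
\item \emph{Choosing points.} The paper picks good points by Markov's inequality (Lemma~\ref{lem:local-annular-geometry}(ii)). You instead average over a product set of tuples. This is equally valid, because the left-hand side does not depend on the tuple and your Step 1 is tuple-independent, so the constants are uniform.
\end{itemize}

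Two claims are wrong as written, though both are easy to repair.

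First, $\pi_{T_xM}(x-w_0)$ does not lie within $O(\delta r)$ of $V$. Here $x$ ranges over $B_{r/2}(y)$ while $w_0\in T$ is fixed, so the component of $x-w_0$ along $L$ is generically of size $\sim r$. What you actually need is a lower bound $|\pi_{V_x}(x-w_0)|\geq r/8$, as in Lemma~\ref{lem:local-annular-geometry}(iii). It follows from your Step 1 picture. In the limit, $x-w_0$ is tangent because the cone is homogeneous about $w_0\in L$, and $T_xM=L\oplus V_x$. Hence $|\pi_{V_x}(x-w_0)|=\dist(x,L)\geq r/4$. With this bound, write $\pi_{T_xM}(x-w_0)=h+v$ with $h\in H_x$ and $v\in V_x$. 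Your remedy then goes through: control $A_M(\cdot,h)$ by the $H$-sum and divide by $|v|$. This is exactly what the paper does.

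Second, the containment $B_{3r/4}(y)\subset B_{3r/2}(w_i)$ is false, since $|y-w_i|\geq \dist(y,T)=r$. With $w_i\in B_{r/4}(z)$ you only get $B_{3r/4}(y)\subset B_{2r}(w_i)$. That is the excluded endpoint $\lambda=2$ of (\ref{eq:monotonicity-pinching}), where the constant $C(n,\lambda)$ (from integrating over $s\in[\lambda r,2r]$) degenerates. Take the cutoff $\phi$ supported in $B_{5r/8}(y)$ instead, as the paper does. Then $B_{5r/8}(y)\subset B_{15r/8}(w_i)$, so $\lambda=15/8$ and the bound by $|\theta_{2r}-\theta_r|(w_i)$ holds.
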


The proof of Proposition~\ref{prop:local-annular-curvature} will follow from the pointwise curvature estimates in the previous section once we have constructed appropriate radial and translation  fields which are almost tangent to $M$ away from the approximate singular set. 
We will also require control of the density drop at these points and regularity away from the approximate singular set. 
Note that the curvature estimate in item~(iii) of the lemma below combined with the Annular Decomposition Theorem \ref{thm:annular-decomposition} implies that $A_M\in L^{2,\infty}$. This estimate has been known since the work of \cite{Naber2020} (see also \cite{Liu2020WeakCounterexamples}). 
The key point is that Proposition \ref{prop:local-annular-curvature} allows us to sum over scales and obtain the endpoint estimate $A_M\in L^2$.
The following lemma summarizes the geometric information necessary to prove Proposition~\ref{prop:local-annular-curvature}.

\begin{lem} \label{lem:local-annular-geometry}
    Let $M,\Ann,T^{n-2}, y,z,r$ be as in Proposition \ref{prop:local-annular-curvature}.
    There exists  $C=C(m,n,\Lambda)>0$ such that for every $\eta\in (0,1)$, if $\delta\leq \delta(n,m,\Lambda,\eta)$ then there exist points $w_0,\ldots, w_{n-2}\in T^{n-2}\cap B_{r/4}(z)$ such that
    \begin{itemize}
        \item [(i)]$\{w_0,\ldots, w_{n-2}\}$ are in $1/2$-general position in $B_{r/4}(z)$;

        \item [(ii)] We have 
        \begin{equation}
            |\theta_{2r}-\theta_r|(w_j)\leq C r^{2-n}\int_{B_{r/4}(z)} |\theta_{2r}-\theta_r|(w) d\cH_T^{n-2}(w) \quad \forall 0\leq j\leq n-2.
        \end{equation}

                \item [(iii)] Set $L:= \Span\{w_{n-2}-w_0,\ldots, w_1-w_0\}$. Then $M$ is smooth in $B_{3r/4}(y)$. Moreover, for every $x\in \spt(M)\cap B_{3r/4}(y)$, define 
                \begin{equation}
                    H_x := \pi_{T_xM}(L), \quad  V_x:=H_x^{\perp_{T_xM}}=T_xM\cap L^\perp.
                \end{equation} 
                Then, $H_x$ is $(n-2)$-dimensional, $V_x$ is $2$-dimensional,  and for all $x\in \spt(M)\cap B_{3r/4}(y)$ we have the estimates
        \begin{equation}
            | \pi_{T_xM}^\perp |_L|<\eta, \quad  |\pi_{V_x}(x-w_0)|\geq \frac{1}{8} r, \quad |A_M|(x)\leq \eta r^{-1}.
        \end{equation}

    \end{itemize}
\end{lem}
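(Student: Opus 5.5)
Items (i) and (ii) come first, from a Chebyshev-type selection using the Ahlfors regularity of Theorem~\ref{thm:annular-structure}(i); item (iii) then follows from a contradiction–compactness argument.

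\emph{Items (i) and (ii).} Set $f(w):=|\theta_{2r}-\theta_r|(w)$ and $I:=\int_{B_{r/4}(z)} f\, d\cH^{n-2}_T$. By Theorem~\ref{thm:annular-structure}(i) (valid since $r\le 2$ and $z\in T$, so $B_{r}(z)\subset B_4$), we have $\cH^{n-2}_T(B_s(w))\approx \ome_{n-2}s^{n-2}$ up to factors $1\pm C\sqrt\delta$ for $w\in T\cap B_{r/4}(z)$. Let $G:=\{w\in T\cap B_{r/4}(z): f(w)\le K r^{2-n} I\}$. By Chebyshev, $\cH^{n-2}_T(B_{r/4}(z)\setminus G)\le K^{-1}r^{n-2}$. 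Choosing $K=K(n)$ large, this bad set is small compared to $\cH^{n-2}_T$ of any ball of radius $c(n)r$ centered on $T\cap B_{r/4}(z)$.

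We then pick the points inductively. Take $w_0\in G$ near $z$. Given $w_0,\dots,w_{j-1}$, condition (a2) says $T\cap B_{r/4}(z)$ is $\delta r$-Hausdorff close to an $(n-2)$-plane $L_{z,r/4}$. The affine span of $w_0,\dots,w_{j-1}$ has dimension $j-1<n-2$, so there is a point $p\in L_{z,r/4}\cap B_{r/4}(z)$ at distance $\ge c r$ from it, with $c$ comfortably larger than $\tfrac12\cdot\tfrac r4$ after an appropriate normalization. A good point is then chosen in $G\cap B_{c'r}(q)$, where $q\in T$ is near $p$; such a point exists because $\cH^{n-2}_T(B_{c'r}(q))\gtrsim r^{n-2}$ beats the bad set. (If needed, I would adjust the constant in the general position definition by using inner radii like $r/8$.) This gives (i) and (ii) with $C=K$.

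\emph{Item (iii), by contradiction and compactness.} The key is a contradiction argument run at scale $r$, after rescaling $B_{2r}(z)$ to $B_2$. Suppose (iii) fails for a fixed $\eta$ along a sequence $\delta_i\to0$. Rescaled, the currents $M_i$ are $(n-2,\delta_i)$-symmetric on $B_2(z_i)$ about $T_i$. By (a2), $T_i$ converges in Hausdorff distance to an $(n-2)$-plane $L_\infty$ through the limit of $z_i$. By (a4) the density pinching goes to zero. The limit $M_\infty$ is therefore a $\modt$ area-minimizing cone invariant along $L_\infty$ (Lemma~\ref{lem:(k,0)-symmetry-cone}), i.e. $L_\infty\times C_0$ with $C_0$ a $2$-dimensional $\modt$ minimizing cone. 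Such $C_0$ are unions of planes/rays meeting only at the origin by Morgan's classification, hence smooth away from the spine with multiplicity one.

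Now $y_i$ sits at distance exactly $1$ from $T_i$, hence at distance $\approx 1$ from $L_\infty$. So the ball $B_{3/4}(y_\infty)$ stays at distance $\ge 1/4$ from the spine, where $M_\infty$ is a smooth multiplicity-one union of sheets that are products of $L_\infty$ with rays. Allard's theorem gives smooth convergence there, which yields smoothness of $M_i$ in $B_{3/4}(y_i)$ and $|A_{M_i}|\to0$; this gives $|A_M|\le\eta r^{-1}$. On $M_\infty$, $T_xM_\infty\supset L_\infty$, so $|\pi^\perp_{T_xM}|_L|\to0$ once $L_i:=\Span\{w_j-w_0\}\to L_\infty$. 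That convergence holds because the $w_j$ lie on $T_i$, are in general position, and $T_i\to L_\infty$. Finally, $\pi_{V_x}(x-w_0)$ converges to the component of $x-w_0$ orthogonal to $L_\infty$, which is tangent to $C_0$ and has length $\approx \dist(x,L_\infty)\ge 1-3/4-1/4$. This last margin is too thin, so I would track constants more carefully: using $|y-w_0|\approx$ dist to $T$, it is $\ge 1/4-o(1)$ for $x\in B_{3/4}$ only if $w_0$ is near $z$. That is exactly why $w_0$ is chosen close to the foot point $z$ above. This yields $\ge r/8$.

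\emph{Main obstacle.} The main obstacle is arranging the constants so that $B_{3r/4}(y)$ stays quantitatively away from the limiting spine. This is needed both for the lower bound $|\pi_{V_x}(x-w_0)|\ge r/8$ and for the application of Allard. It may require shrinking the balls slightly or choosing $w_0$ within $o(r)$ of $z$, which the density estimate above permits.
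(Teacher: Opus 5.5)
Your overall plan matches the paper's. For (i)--(ii) you do a Chebyshev selection against the Ahlfors lower bound. Use (a2) at scale $r$, not $r/4$, since only $r>r_z$ is known; placing targets at $\bar z+\frac{3}{20}re_i$ along an orthonormal frame of $L_{z,r}$, as the paper does, already gives $\frac12$-general position at scale $r/4$ without changing the definition. For (iii) you run a rescaled contradiction--compactness argument with Morgan's classification and Allard's theorem.

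The step that is not justified as written is the identification of the spine of the limit cone with $L_\infty=\lim T_i$. Condition (a3) only gives $(n-2,\delta)$-symmetry on $B_s(z)$ with respect to \emph{some} $(n-2)$-plane, and the definition does not relate that plane to the plane in (a2). Likewise, (a4) used only at $z$ just makes the limit conical about $z_\infty$. So ``symmetric about $T_i$'' is not available. Yet you need exactly this to get $T_xM_\infty\supset L_\infty$ (hence $|\pi^\perp_{T_xM}|_L|\to 0$) and to place $y_\infty$ at distance $1$ from the spine. The paper closes this by applying (a4) at each chosen point $w_{i,j}\in T_i$. Since $\Lip(r_x)\le 2$ and $r_z<r$, one has $r_{w_{i,j}}\le r_z+2|w_{i,j}-z|<\frac32 r$, so after rescaling $|\theta_{5/2}-\theta_{3/2}|(w_{i,j})<\delta_i$. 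The regularized monotonicity formula controls $\int_{B_s(w)}|\pi^\perp_{T_yM}(y-w)|^2$ over the whole ball, so the limit is conical about every $w_j$. Because the $w_j$ are in general position, the limit is then translation invariant along $L_\infty$.

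Separately, your ``thin margin'' is a miscalculation, and your proposed remedy would not work. Once $w_0\in L_\infty$ and $x-w_0\in T_xM_\infty$, you have
$|\pi_{V_x}(x-w_0)|=|\pi_{L_\infty^\perp}(x-w_0)|=\dist(x,L_\infty)\ge \dist(y_\infty,L_\infty)-|x-y_\infty|$, with $\dist(y_\infty,L_\infty)=1$. This is $\ge 1/4$ on $B_{3/4}(y_\infty)$ regardless of where $w_0$ sits on $L_\infty$; there is no extra $-1/4$. The paper takes $\ge 1/5$ on $B_{4/5}(y)$ and passes to $\ge 1/8$ on $B_{3/4}(y_b)$ by smooth convergence. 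So $w_0$ need not be near $z$ at all. In fact you could not place it within $o(r)$ of $z$ with a fixed constant in (ii): Chebyshev at threshold $K$ only finds good points in balls of radius $\gtrsim K^{-1/(n-2)}r$. A minor slip: the sheets of $L_\infty\times C_0$ are products of $L_\infty$ with $2$-planes, not with rays.
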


\begin{proof}
    Let $L_{z,r}^{n-2}$ be the affine $(n-2)$-plane as in Definition \ref{def:annular-region}(a2) and let $V_{z,r}^{n-2}$ be its vector subspace. 
    By Definition \ref{def:annular-region}(a2), there is $\ol z \in L_{z,r}$ with $|z-\ol z|<\delta r$. 
    Let $e_1,\ldots, e_{n-2}$ be an orthonormal basis of $V_{z,r}$, set $\ol p_0=\ol z$ and set
    \[
    \ol p_i := \ol z + \frac{3}{20} r e_i\in L_{z,r} \cap B_r(z), \quad i=1,\ldots, n-2
    \]
    for $\delta$ small enough. 
    By Definition \ref{def:annular-region}(a2) again, we may then choose points $p_1,\ldots, p_{n-2}\in T\cap B_r(z)$ such that $|p_i-\ol p_i|<\delta r$ for all $i=1,\ldots, n-2$. Set $p_0:=z$.

    Let $\beta,D>0$ be geometric constants which we will choose later and consider the set 
    \begin{equation*}
        G:= \left\{x\in B_{r/4}(z)\cap T^{n-2} \ssep |\theta_{2r}-\theta_r|(x)\leq D r^{2-n} \int_{B_{r/4}(z)}|\theta_{2r}-\theta_r|(w) d\cH_T^{n-2}(w)\right\}.
    \end{equation*}
    By Markov's inequality,
    \begin{equation*}
        \cH^{n-2}( (T^{n-2}\cap B_{r/4}(z))\setminus G)\leq \frac{r^{n-2}}{D}.
    \end{equation*}
    On the other hand, for $\delta\leq \beta$ and $\beta$ small enough ($\beta\leq 10^{-3}$ suffices), for every $i=0,\ldots, n-2$ we have that $B_{\beta r}(p_i)\subset B_{r/4}(z)$ and
    \begin{equation}
        \cH^{n-2}(T^{n-2}\cap B_{\beta r}(p_i))\geq (1-C(n,m,\Lambda)\sqrt{\delta})\ome_{n-2}(\beta r)^{n-2} \geq \frac{1}{2} \ome_{n-2} (\beta r)^{n-2}
    \end{equation}
    by the Ahlfors regularity of the Annular Structure Theorem \ref{thm:annular-structure}. 
    Choosing $D\geq \frac{2}{\ome_{n-2}\beta^{n-2}} +1$, we see that $B_{\beta r}(p_i)\cap G\neq \emptyset$ and so we may choose $w_i\in B_{\beta r}(p_i)\cap G$ for $i=0,\ldots, n-2$. 
    By construction, the $w_i$ satisfy (ii) and it is straightforward to see that one can choose $\beta=\beta(n)$ small enough so that $w_0,\ldots, w_{n-2}$ are in $\frac{1}{2}$-general position.


    The proof of (iii) will follow by a contradiction and compactness argument. If (iii) is not true, we can find $\eta\in (0,1)$ and sequences $M_{b},\cA_{b},T_b, y_b,z_b,r_b$ satisfying the assumptions in Proposition \ref{prop:local-annular-curvature} with $\delta_b\downarrow 0$. 
    For each $b$ we let $w_{b,0},\ldots, w_{b,n-2}\in T_{b}\cap B_{r_b/4}(z_b)$ be the corresponding points satisfying (i) and (ii). 

    Rescaling by $r_b$, translating by $z_b$, and relabeling, we may henceforth assume  
    \begin{equation}
        r_b=1, \quad z_b =0, \quad |y_b| = \dist(y_b, T_b)=1, \quad w_{b,i} \in B_{1/4}.
    \end{equation}
    Note that by the monotonicity formula the rescaled currents still satisfy $|M_b|(B_4)\leq \Lambda$. 
    Let $L_{b}$ be the affine $(n-2)$-plane given by Definition~\ref{def:annular-region}(a2) for $z_b=0, r_b=1$ and $V_b$ its vector subspace. Since $0=z_b\in T_b$, by Definition~\ref{def:annular-region}(a2) we have, after possibly taking a subsequence, that
       $ L_b\to L$ and  $ V_b\to L$
    in Hausdorff distance on $B_1$ for some $(n-2)$-subspace $L$.
    Passing to a further subsequence if necessary, we have 
        $
       y_b\to y$ and  $ w_{b,i}\to w_i \in L
    $ for $i=0,\ldots, n-2$.
    It follows that  $w_0,\ldots, w_{n-2}$ are in $1/4$-general position.  
    In particular, $L = w_0 + \Span\{w_{n-2}-w_0,\ldots, w_{1}-w_0\}$.
    Finally, by the uniform mass bounds we have (after another subsequence)
    $
        M_b \to M
    $ in the flat norm as area-minimizing currents $\modt$ and as stationary integral varifolds. 

    We claim that $M$ is conical with spine containing $L$ in $B_1$. Indeed, if $r_x^b$ denotes the symmetry scale for $M_b$, then $r_{w_{b,i}}^b \leq r_0^b + 2|w_{b,i}| < \frac{3}{2}$ since $\Lip(r_x^b)\leq 2$ and $r_0^b<1$.
    Letting $\theta^{M_b}_r(x)$ be the regularized density of $M_b$, we then have 
    \begin{equation*}
        |\theta^{M_b}_{5/2}(w_{b,i} ) - \theta^{M_b}_{3/2}(w_{b,i})|<\delta_b
    \end{equation*}
    for all $b$ and $i=0,\ldots, n-2$.
    By varifold convergence, we may pass to the limit and so obtain $\theta^M_{5/2}(w_i) = \theta^M_{3/2}(w_i)$. It follows that $M$ is conical and translation invariant along $L$ in $B_2$.

    By upper-semicontinuity, $y\in \spt(M)$ and $\dist(y,L)=1$. Since $M$ is an area-minimizing cone $\modt$ with spine dimension at least $n-2$, the classification of such area-minimizing cones $\modt$ \cite{Morgan1982} implies that in $\ol B_{4/5}(y)$, $M$ is a multiplicity-1 copy of a single $n$-plane. 
    By Allard's Theorem \cite{Allard1972}, in the ball $B_{4/5}(y)$,  $M_b$ is smooth, $M_b\to M$ smoothly, and
    \begin{equation}
        \sup_{x\in \spt (M_b)\cap B_{4/5}(y)} |A_{M_b}|(x)\to 0.
    \end{equation}
    Moreover, since $w_{b,i}\to w_i$ it follows that $\ol L_b:= \Span\{w_{b,i}-w_{b,0}\}_{i=1}^{n-2} \to L\subset T_xM$ and so  
    \begin{equation}
       \sup_{x\in \spt (M_b)\cap B_{4/5}(y)} \|\pi_{T_x M_b}^\perp|_{\ol L_b}\|\to 0
    \end{equation}
    We thus obtain the first and third estimates of (iii).  Moreover, for large enough $b$ we conclude $\pi_{T_xM_b|_{\ol L_b}}$ is injective so that $H_x^b$ has dimension $n-2$.

    For the remaining  estimate, let $x\in \spt(M)\cap B_{4/5}(y)$ and let $V_x:=T_xM \cap L^\perp$. We then have 
    \begin{equation}
    \begin{aligned}
        |\pi_{V_x}(x-w_0)|&=|\pi_{L^\perp}(x-w_0)| = \dist(x,L)\\
        &\geq \dist(y,L)-|x-y|\geq \frac{1}{5}
    \end{aligned}
    \end{equation}
    since $x-w_0\in T_xM$. By the smooth convergence, $\ol L_b\to L$, and $w_{b,0}\to w_0$, we obtain
    \begin{equation}
        |\pi_{T_{x}M_b\cap \ol L_b^\perp}(x-w_{b,0})|\geq \frac{1}{8}
    \end{equation}
    on $B_{3/4}(y_b)\cap \spt(M_b)$ for sufficiently large $b$.
\end{proof}

We may now finish the proof of Proposition~\ref{prop:local-annular-curvature}.

\begin{proof}[Proof of Proposition \ref{prop:local-annular-curvature}]
    Let $w_0,\ldots, w_{n-2}$ be the points in $B_{r/4}(z)$  in $1/2$-general position furnished by Lemma~\ref{lem:local-annular-geometry} with $\eta=\frac{1}{8}$. Let $v_i:= w_i-w_0$ for $i=1,\ldots, n-2$ and set $L= \Span\{v_1,\ldots, v_{n-2}\}$.
    Recalling that $M$ is smooth in $B_{3r/4}(y)$, for each $x\in \spt(M)\cap B_{3r/4}(y)$ we let $V_x = L^\perp\cap T_xM = \pi_{T_xM}(L)^{\perp_{T_xM}}$ be the orthogonal complement of $H_x:= \pi_{T_xM}(L)$ in $T_xM$.  
    For $i=0,\ldots, n-2$, set $Z_i(x) = \pi_{T_xM}^\perp (x-w_i)$ and write for each $x\in B_{3r/4}(y)\cap \spt(M)$,
    \begin{equation}
        \pi_{T_xM} (x-w_0) = h(x) + v(x), \quad h(x)\in H_x, \,v(x)\in V_x
    \end{equation}
    and let $e(x) = \frac{v(x)}{|v(x)|}$.
    By Lemma~\ref{lem:pointwise-curvature} we have for $x\in \spt(M)\cap B_{3r/4}(y)$,
    \begin{equation} \label{eq:sff-two-terms}
        |A_M|^2(x) \leq C \left( r^{-2}\sum_{i=1}^{n-2}|A_M(\cdot, \pi_{T_xM}v_i)|^2 + |A_M(\cdot,e(x))|^2\right).
    \end{equation}
    Let us begin by controlling the second  term in \ref{eq:sff-two-terms}. 
    By Lemma~\ref{lem:jacobi-radial-trans},
    \begin{equation}
        \nabla^\perp_M Z_0(x) = -A_M(\cdot, \pi_{T_{x}M}(x-w_0)) = - A_M(\cdot, h(x)) - |v(x)| A_M(\cdot, e(x)).
    \end{equation}
    so that 
    \begin{equation}
        |v(x)|^2 |A_M(\cdot, e(x))|^2 \leq C\left(  |\nabla_M^\perp Z_0|^2 + |A_M(\cdot,h(x))|^2\right)
    \end{equation}
    By Lemma~\ref{lem:local-annular-geometry}, $|v(x)|\geq \frac{1}{8}r$ and thus
    \begin{equation} \label{eq:sff-e-bound}
    \begin{aligned}
        |A_M(\cdot, e(x))|^2 &\leq Cr^{-2} \left( |\nabla_M^\perp Z_0|^2 + |A_M(\cdot,h(x))|^2 \right) \\
        &\leq 
        Cr^{-2} \left(|\nabla_M^\perp Z_0|^2+  \sum_{i=1}^{n-2}|A_M(\cdot, \pi_{T_xM}v_i)|^2 \right)
    \end{aligned}
    \end{equation}
    where in the second line we have used that $|h(x)|\leq Cr$, $\|\pi_{T_xM}^\perp|_L\|<\frac{1}{8}$, and $w_0,\ldots, w_{n-2}$ are in $1/2$-general position at scale $r/4$. 

    In view of equations \ref{eq:sff-two-terms} and \ref{eq:sff-e-bound}, it suffices to estimate $|\nabla_M^\perp Z_0|^2$ and $|A_M(\cdot, \pi_{T_xM}v_i)|^2$  for $i=1,\ldots, n-2$. On the other hand writing $v_i = (x-w_0) - (x-w_i)$ we have 
    \begin{equation} \label{eq:sff-transverse-nabla-perp}
    \begin{aligned}
        |A_M(\cdot, \pi_{T_xM}v_i)|^2 &\leq C\left(|A_M(\cdot, \pi_{T_xM}(x-w_i))|^2+|A_M(\cdot, \pi_{T_xM}(x-w_0))|^2 \right) \\
        &\leq C \left( |\nabla^\perp_M Z_i|^2 + |\nabla^\perp_M Z_0|^2 \right)
    \end{aligned}
    \end{equation}
    by Lemma~\ref{lem:jacobi-radial-trans}. Thus we only need to estimate $\int_{B_{r/2}(y)}|\nabla_M^\perp Z_i|^2$ for $i=0,\ldots, n-2$.

    Let $\phi:B_{5r/8}(y)\to [0,1]$ be a smooth compactly supported bump function with $\phi \equiv 1$ 
    on $B_{r/2}(y)$ and such that $|\nabla \phi|\leq C r^{-1}$.
    For $i=0,\ldots, n-2$, we may therefore apply Lemma \ref{lem:jacobi-cacc} to obtain,
    \begin{equation} \label{eq:wi-pinching-controls}
    \begin{aligned}
        r^{-n} \int_{B_{r/2}(y)} |\nabla_M^\perp Z_i|^2 d|M|(x) &\leq r^{-n} \int_{B_{5r/8}(y)} \phi^2 |\nabla_M^\perp Z_i|^2 d|M|(x) \\
        &\leq C r^{-n-2}\int_{B_{5r/8}(y)} |\pi_{T_xM}^\perp(x-w_i)|^2 d|M|(x) \\
        &\leq C|\theta_{2r}(w_i)-\theta_r(w_i)|
    \end{aligned}
    \end{equation}
    where we have used Proposition~\ref{prop:monotonicity} in the last line.
    Putting together equations \ref{eq:sff-two-terms}, \ref{eq:sff-e-bound}, \ref{eq:sff-transverse-nabla-perp}, \ref{eq:wi-pinching-controls},  integrating over $B_{r/2}(y)$, and recalling Lemma~\ref{lem:local-annular-geometry}(ii), we obtain
    \begin{equation}
    \begin{aligned}
        r^{2-n} \int_{B_{r/2}(y)} |A_M|^2 d|M|(x) & \leq C r^{-n} \sum_{i=0}^{n-2} \int_{B_{r/2}(y)} |\nabla_M^\perp Z_i|^2 d|M|(x) \\
        &\leq C \sum_{i=0}^{n-2} |\theta_{2r}(w_i) - \theta_r(w_i)|\\
        &\leq C r^{2-n} \int_{B_{r/4}(z)} |\theta_{2r}-\theta_r|(w) d\cH_T^{n-2}(w).
    \end{aligned}
    \end{equation}

\end{proof}

\subsection{Proof of Theorem~\ref{thm:annular-curvature}}

Given Proposition~\ref{prop:local-annular-curvature}, the proof of Theorem~\ref{thm:annular-curvature}  follows from a Whitney decomposition of the annular region. For an essentially equivalent cover on annular regions see \cite[\S4.4]{JiangNaber2021}.

\begin{proof}[Proof of Theorem~\ref{thm:annular-curvature}]

We take a Whitney decomposition $\mc W$ of $\R^{n+m}\setminus T^{n-2}$ into dyadic closed cubes $Q\in \mc W$ satisfying
\begin{equation} \label{eq:whitney-diam}
    3\diam Q \leq \dist(Q,T^{n-2})\leq 9\diam(Q).
\end{equation}
Indeed, we may take $\mc W$ to be the maximal  (under inclusion) subcollection of dyadic cubes $Q$ in $\R^{n+m}$ which satisfy
\begin{equation}
    3\diam (Q) \leq \dist(Q,T^{n-2}).
\end{equation}
It is now straightforward to verify that $\mc W$ is a Whitney decomposition satisfying \ref{eq:whitney-diam}.

Set
\begin{equation}
    \mc W_{\cA}:= \{Q\in \mc W \ssep Q\cap B_1\cap\cA\cap \spt(M)\neq \emptyset\}
\end{equation}
and for every $Q\in \mc W_{\cA}$ choose $y_Q\in Q\cap B_1\cap \cA\cap \spt(M)$ and set 
\begin{equation}
    d_Q= \dist(y_Q,T^{n-2})  = |y_Q-z_Q|
\end{equation}
with $z_Q\in T^{n-2}$.
By the lower bound in \ref{eq:whitney-diam}, $Q\subset B_{\diam(Q)}(y_Q)\subset B_{d_Q/2}(y_Q)$ and thus the collection $\{B_{d_Q/2}(y_Q)\}_{Q\in \mc W_\cA}$ covers $B_1\cap \cA \cap \spt(M)$. 

Set, for each $Q\in \mc W_{\cA}$, $R_Q :=T^{n-2}\cap B_{d_Q/4}(z_Q)$. We claim that for each $w\in T^{n-2}$ and $t>0$, we have 
\begin{equation} \label{eq:whitney-packing}
     \# \mc W_{w,t} := \#\left\{Q\in \mc W_\cA \ssep w\in R_Q , \,t\in (d_Q, 2d_Q] \right\} \leq C(n,m).
\end{equation}
Indeed, if $Q\in \mc W_{w,t}$ then 
\begin{equation}
    \dist(w,Q)\leq |w-y_Q|\leq |w-z_Q| + |z_Q-y_Q|\leq 2d_Q \leq 2t.
\end{equation}
Since $\diam(Q) \leq C(n,m)d_Q \leq C(n,m)t$, it follows that every $Q\in \mc W_{w,t}$ is contained in $B_{C(m,n)t}(w)$. 
 Since the $Q$'s have pairwise disjoint interiors with diameter comparable to $t$, a packing estimate proves the claim. 

We have 
\begin{equation} \label{eq:sff-neck-estimate-split}
    \begin{aligned}
        \int_{\cA\cap B_1} |A_M|^2 d|M| &\leq \sum_{Q\in \mc W_{\mc A}} \int_{B_{d_Q/2}(y_Q)} |A_M|^2 d|M| \\
        &\leq \sum_{\substack{Q\in \mc W_{\mc A}\\ d_Q>4r_{z_Q}} }\int_{B_{d_Q/2}(y_Q)} |A_M|^2 d|M| + \sum_{\substack{Q\in \mc W_{\mc A}\\ d_Q>r_{z_Q}\geq d_Q/4}} \int_{B_{d_Q/2}(y_Q)} |A_M|^2 d|M|. 
    \end{aligned}
\end{equation}
For the first sum in \ref{eq:sff-neck-estimate-split}, we note that for every $Q\in \mc W_{\cA}$ with $d_Q>4r_{z_Q}$ and $w\in R_Q= T^{n-2}\cap B_{d_Q/4}(z_Q)$, we have 
\(
    r_w\leq r_{z_Q} + \frac{d_Q}{2}<d_Q 
\)
since $\Lip(r_z)\leq 2$. 
On the other hand for fixed $w\in T^{n-2}$ consider the monotone function $\theta_{r}(w)$ and let $\nu_w$ be its associated  Lebesgue–Stieltjes measure obeying $\nu_w((s,r]) = \theta_r(w)-\theta_s(w)$. By \ref{eq:whitney-packing},
\begin{equation} \label{eq:whitney-packing-indicator}
    m_w(t):= \sum_{\substack{Q\in \mc W_\cA \\ w\in R_Q}} 1_{(d_Q, 2d_Q]}(t) \leq C(n,m) 
\end{equation}
for all $w\in T^{n-2}\cap B_1$ and $t>0$.
Thus we apply Lemma~\ref{lem:pointwise-curvature}, Proposition~\ref{prop:local-annular-curvature} and \ref{eq:whitney-packing-indicator} to estimate
\begin{equation} \label{eq:neck-est-first-term}
    \begin{aligned}
        \sum_{\substack{Q\in \mc W_{\mc A}\\ d_Q>4r_{z_Q}} }\int_{B_{d_Q/2}(y_Q)} |A_M|^2 d|M|& \leq C  \sum_{\substack{Q\in \mc W_{\mc A}\\ d_Q>4r_{z_Q}} } \int_{R_Q} |\theta_{2d_Q}-\theta_{d_Q}|(w)d\cH^{n-2}_T(w)\\
        &\leq C \int_{T^{n-2}\cap B_3} \int_{r_w}^{4} m_w(t) d\nu_w d\cH_T^{n-2}(w) 
        \leq C \delta
    \end{aligned}
\end{equation}
where we have applied Definition~\ref{def:annular-region}(a4) and the Ahlfors regularity Theorem~\ref{thm:annular-structure} in the last line.

To estimate the second sum in \ref{eq:sff-neck-estimate-split}, we use Lemma~\ref{lem:local-annular-geometry} to estimate, with $\delta<\delta(n,m,\Lam,\eps)$ small enough so that $\eta^2 <\eps$,
\begin{equation} \label{eq:inner-neck-est}
    \sum_{\substack{Q\in \mc W_{\mc A}\\ d_Q>r_{z_Q}\geq d_Q/4}} \int_{B_{d_Q/2}(y_Q)} |A_M|^2 d|M| \leq C \eps  \sum_{\substack{Q\in \mc W_{\mc A}\\ d_Q>r_{z_Q}\geq d_Q/4}} d_Q^{n-2}.
\end{equation}
Define for each $w\in T^{n-2}$,
\begin{equation}
    N(w) := \sum_{\substack{Q\in \mc W_{\mc A}\\ d_Q>r_{z_Q}\geq d_Q/4}} 1_{T^{n-2}\cap B_{r_{z_Q}/8}(z_Q)}(w).
\end{equation}
For every $Q\in \mc W_\cA$ with $d_Q>r_{z_Q}\geq d_Q/4$, any $w\in T^{n-2}\cap B_{r_{z_Q}/8}(z_Q)$ satisfies 
$
    \frac{4}{5}r_{w}<d_Q \leq \frac{16}{3}r_w.
$
Therefore for any $w\in T^{n-2}$,
\begin{equation}
\begin{aligned}
    N(w) & \leq C  \int_0^\infty \sum_{\substack{ Q\in \mc W_{\mc A},\, w\in T^{n-2}\cap B_{r_{z_Q}/8}(z_Q) \\ d_Q>r_{z_Q}\geq d_Q/4}} 1_{(d_Q, 2d_Q]}(t) \frac{dt}{t} 
    \leq \int_{\frac{4}{5}r_w}^{\frac{32}{3}r_w} m_w(t) \frac{dt}{t} \leq C(n,m,\Lam)
\end{aligned}
\end{equation}
since $T^{n-2}\cap B_{r_{z_Q}}(z_Q)\subset R_Q$ and \ref{eq:whitney-packing-indicator} holds.

Finally, by the Ahlfors regularity in Theorem~\ref{thm:annular-structure} 
\begin{equation}
    \begin{aligned}
        \sum_{\substack{Q\in \mc W_{\mc A}\\ d_Q>r_{z_Q}\geq d_Q/4}} d_Q^{n-2} &\leq C \sum_{\substack{Q\in \mc W_{\mc A}\\ d_Q>r_{z_Q}\geq d_Q/4}}  \cH^{n-2}_T(B_{r_{z_Q}/8}(z_Q) )
        \leq C \int N(w) d\cH^{n-2}_T(w) \leq C.
    \end{aligned}
\end{equation}
Recalling \ref{eq:neck-est-first-term} and \ref{eq:inner-neck-est}, we finish the proof of the theorem.  
\end{proof}

\section{Proof of Main Theorems}

By applying Theorem~\ref{thm:annular-curvature} and Theorem~\ref{thm:annular-decomposition}, we may finish the proof of Theorem~\ref{thm:main-curvature}.
\begin{proof}[Proof of Theorem~\ref{thm:main-curvature}]
    By Theorem~\ref{thm:annular-decomposition} we have 
    \begin{equation}
        \reg(M) \subset \bigcup_b B_{r_b}(x_b) \cup \bigcup_a \left(\Ann_a\cap B_{r_a}(x_a)\right)
    \end{equation}
    where $B_{16r_b}(x_b)$ are $(n-1,\eps_{reg})$-symmetric and $\Ann_a\cap B_{16r_a}(x_a)$ are $\delta$-annular regions, for $\delta$ small enough.
    Applying Proposition~\ref{prop:symmetry-eps-regularity} we obtain
    \begin{equation}
        \int_{B_{r_b}(x_b)} |A_{M}|^2 d|M| \leq C r_b^{n-2}.
    \end{equation}
    Meanwhile the (scale invariant form of) Theorem~\ref{thm:annular-curvature} gives
    \begin{equation}
        \int_{\Ann_a \cap B_{r_a}(x_a)}|A_M|^2 d|M| \leq r_a^{n-2} C(\eps+\delta)\leq Cr_a^{n-2}.
    \end{equation}
    where $C=C(n,m,\Lam)$ in both cases.
   Combining these estimates with Theorem~\ref{thm:annular-decomposition}(iii) gives
   \begin{equation}
   \begin{aligned}
       \int_{B_1} |A_M|^2 d|M| &\leq \sum_a \int_{\Ann_a \cap B_{r_a}(x_a)}|A_M|^2 d|M| + \sum_b  \int_{B_{r_b}(x_b)} |A_{M}|^2 d|M|  \\
       &\leq C\left( \sum_a r_a^{n-2} + \sum_b r_b^{n-2} \right)\leq C
    \end{aligned}
   \end{equation}
   which finishes the proof.
\end{proof}

\begin{proof}[Proof of Corollary~\ref{cor:2d-topology-bound}]

We first observe that Theorem~\ref{thm:annular-decomposition} shows that $\# (\sing(M)\cap B_3)\leq N(m,\Lam)$. If $\kappa_\Gamma$ denotes the  curvature of a curve $\Gamma$, then by Sard's theorem if we consider the smooth curve $\Gamma_t=\reg(M)\cap  \ptl B_t$ for a.e. $t\in (1,2)$, we have 
\begin{equation*}
    |\kappa_{\Gamma_t}| |\nabla_M |x||\leq t^{-1}  + 2|A_M|.
\end{equation*}
By coarea, Cauchy-Schwarz, and \ref{eq:main-L2-bound} we then have 
\begin{equation*}
    \int_1^2 \int_{\Gamma_t} |\kappa_{\Gamma_t}| d\cH^1 dt\leq \int_{\reg(M)\cap (B_2\setminus B_1)} (|x|^{-1} + 2|A_M|) d|M|\leq \Lam + 2\sqrt{\Lam C}\leq C.
\end{equation*}
We  choose $r\in (1,2)$ such that $\ptl B_r$ avoids the singular set, meets $\reg(M)$ transversely, and satisfies $\int_{\Gamma_r} |\kappa_r|\leq C$. 

On the other hand by choosing $\eta = 1/10$, $\delta$ as in Lemma~\ref{lem:local-annular-geometry}, and applying Theorem~\ref{thm:annular-decomposition}, we see that each singular point $p\in \sing(M)\cap B_r$ is a point in the quantitative singular set of some annular region and, near $p$, $|A_M|(x)\leq \frac{1}{10}\dist(x,p)^{-1}$ with $\spt(M) \cap B_{\eps}(p)$ diffeomorphic to at most $(m+2)/2$ smooth local embedded submanifolds  intersecting only at $p$ by \cite[Corollary 7]{Morgan1982} for $\eps$ small enough. 
In particular we have $\int_{\reg(M)\cap \ptl B_\eps(p)}|\kappa_{\reg(M)\cap \ptl B_\eps(p)}|  \leq 2\pi (m+2)$. 

Setting $F = (\spt(M)\cap \ol B_r)\setminus \bigcup_{p\in \sing(M)\cap B_r} B_\eps(p)$ we see that the total curvature of $\ptl F$ is bounded by a uniform constant $C(m,\Lam)$. 
Since no compact boundaryless minimal surfaces can exist in Euclidean space, we obtain by Fenchel's theorem that $b_0(F)\leq b_0(\ptl F)\leq C\int_{\ptl F} |\kappa_{\ptl F}|\leq C$. 
By Gauss-Bonnet,
\begin{equation*}
    2\pi \chi(F) =-\frac{1}{2}\int_F |A_M|^2 d|M| +\int_{\ptl F}k_{\ptl F} \geq -C
\end{equation*}
where $k_{\ptl F}$ is the geodesic curvature. Therefore $b_1(F;\Z_2)=b_0(F)-\chi(F)\leq C(m,\Lam)$. It follows from the classification of surfaces that there are at most $N(m,\Lam)$  diffeomorphism types for $F$. Since $\reg(M)\cap B_r$ is diffeomorphic to the interior of $F$, the corollary is proved.

\end{proof}

\begin{rmk}
    Corollary~\ref{cor:2d-topology-bound} may also be proved by gluing together annular and $(1,\eps_{reg})$-symmetric regions as in \cite[\S8]{Cheeger2015}, using Gauss-Bonnet to show that annular regions are  diffeomorphic to annuli.
\end{rmk}

\appendix

\section{Annular Regions} \label{sec:A:annular}

In this appendix  we  outline the annular decomposition and structure theory for an integer rectifiable stationary varifold $M$. 
The presentation follows closely that of \cite{ChowJiangNaber}. 
In what follows we let $\theta_r(x)$ denote the regularized mass density for $M$ as in Definition~\ref{def:regularized-mass}.

\subsection{Annular Structure Theorem}

We first give the more general definition of annular regions.
\begin{defn}[Annular Region]
        Let $M$ be a stationary integer rectifiable varifold in $B_{4r_a}(x_a)$. Let $T^{k}\subset B_{4r_a}(x_a)$ be a $k$-dimensional topological submanifold of $\R^{n+m}$, called the approximate singular set.
    Let $r_x:T^{k}\to \R_{\geq 0}$ be a nonnegative function satisfying $\Lip(r_x)\leq 2$, called the symmetry scale.
    We call $\Ann:=B_{r_a}(x_a)\setminus \ol B_{r_x}(T^{k})$ a $(k,\eps,\delta)$-annular region if the following hold:
    \begin{itemize}
        \item [(a1)] $T^{k}$ is $\delta$-graphical on $B_{r_x}(x)$ for all $x\in T^{k}$;
        \item [(a2)] For all $x\in T^{k}$ and $r>r_x$ such that $B_{r}(x)\subset B_{r_a}(x_a)$, there exists a $k$-dimensional affine subspace $L^{k}_{x,r}$ such that
        \begin{equation}
            d_{H}(L_{x,r}^{k}\cap B_r(x),T^{k}\cap B_r(x))<\delta r.
        \end{equation}
        \item [(a3)] For all $x\in T^{k}$ and $r>r_x$ such that $B_r(x)\subset B_{r_a}(x_a)$, we have that $M$ is $(k,\delta)$-symmetric on $B_r(x)$, but not $(k+1,\eps)$-symmetric on $B_r(x)$.
        \item [(a4)] For each $x\in T^{k}$, we have 
        \begin{equation}
            |\theta_{2r_a}(x)-\theta_{r_x}(x)|<\delta.
        \end{equation}
    \end{itemize}
\end{defn}

\begin{thm}[Annular Structure Theorem] \label{A:thm:annular-structure}
    Let $M$ be a stationary integral varifold in $B_8$ with $|M|(B_8)\leq \Lam$ and let $\cA = B_4\setminus \ol B_{r_x}(T^k)$ be a $(k,\eps,\delta)$-annular region. If $\delta\leq \delta(n,m,\Lam,\eps)$, then 
    \begin{enumerate}
        \item [(1)] For all $x\in T^k$ and $r>0$ such that $B_{4r}(x)\subset B_4$, we have 
        \begin{equation}
            (1-C\sqrt{\delta})\ome_k r^k\leq \cH^k(B_r(x)\cap T^k)\leq (1+C\sqrt{\delta})\ome_k r^k,
        \end{equation}
        where $C=C(n,m,\Lam,\eps)>0$.

        \item [(2)] The approximate singular set $T^k$ is $k$-rectifiable.
    \end{enumerate}
\end{thm}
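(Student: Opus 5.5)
The plan is to follow the Cheeger--Jiang--Naber strategy for neck regions: reduce both statements to a packing estimate for a discrete measure on $T^k$, and prove that packing estimate by induction on scales using the monotonicity formula. Write $\mu := \cH^k\mres T^k$. Throughout I fix $x\in T^k$ and $r$ with $B_{4r}(x)\subset B_4$. The work splits into three steps.

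\textbf{Step 1: small scales.} At scales $r\le r_x$, condition (a1) says $T^k$ is a $\delta$-graph with $C^2$ norm at most $\delta$ over a $k$-plane in $B_{r_x}(x)$. Hence
\[
\cH^k(T^k\cap B_r(x)) = (1\pm C\delta)\,\ome_k r^k \qquad \text{for } r\le r_x .
\]
So the content of (1) lies at scales $r>r_x$. At those scales we have the $\delta$-Hausdorff closeness (a2), the $(k,\delta)$-symmetry together with the failure of $(k+1,\eps)$-symmetry (a3), and the pinching (a4).

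\textbf{Step 2: the key geometric estimate.} The aim is a quantitative bound on how far $T^k$ tilts between consecutive scales. For $r>r_x$ let $L_{x,r}$ be the best plane. I want
\[
\beta(x,r)^2 := r^{-k-2}\int_{T\cap B_r(x)} \dist(w,L_{x,r})^2\, d\mu(w) \le C \int_{T\cap B_{2r}(x)} |\theta_{8r}-\theta_{r/8}|(w)\, d\mu(w)\, r^{-k}.
\]
To prove it, argue as in the $L^2$ best-approximation theorem of Naber--Valtorta. Take the covariance matrix of $\mu$ in $B_r(x)$, with eigenvectors $v_i$ and eigenvalues $\lambda_i$. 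Test the monotonicity pinching (Proposition~\ref{prop:monotonicity}) against the differences $w_1-w_2$ of points of $T$. This gives
\[
\lambda_{k+1}\,\int |\pi^\perp_{T_yM} v_{k+1}|^2 \,d|M| \lesssim \int |\theta_{8r}-\theta_{r/8}|\, d\mu .
\]
Now invoke the non-$(k+1,\eps)$-symmetry from (a3): combined with the $(k,\delta)$-symmetry along the top $k$ eigenvectors, it forces
\[
\int |\pi^\perp_{T_yM}v_{k+1}|^2 \ge c(\eps)\, r^n .
\]
That lower bound is what yields the estimate on $\beta$, and it is where $C$ acquires its dependence on $\eps$.

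\textbf{Step 3: induction on scales.} I would prove the upper and lower Ahlfors bounds together by downward induction on $r$, starting at $r_x$ and using Step 1 as the base case. Assume the bound $\mu(B_s(w))\le 2\ome_k s^k$ holds at all smaller scales. Then Step 2 combined with the discrete Reifenberg theorem of Naber--Valtorta gives
\[
\mu(B_r(x))\le\Big(1+C\sum_{\text{dyadic } s\le r}\beta^2\Big)\ome_k r^k .
\]
Summing (a4) over the dyadic scales telescopes, since each point sees each dyadic scale a bounded number of times. Together with the Lipschitz bound on $r_x$, this controls $\sum\beta^2\le C\delta$. Because the Reifenberg error is quadratic in $\beta$ while the flatness error from (a2) is linear, the deviation from $\ome_k r^k$ improves from $C\delta$ to $C\sqrt\delta$, and the constant $2$ in the inductive hypothesis is recovered with room to spare. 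For the lower bound, use (a2) at every scale to build a bi-Lipschitz Reifenberg parametrization of $T$ over $L_{x,r}$ whose Lipschitz constants are $1+C\sqrt\delta$; the Jacobian bound then gives $(1-C\sqrt\delta)\ome_k r^k$.

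\textbf{Rectifiability.} Statement (2) follows from the same square-function bound. By the rectifiable Reifenberg theorem,
\[
\int_{T}\int_0^1 \beta(w,s)^2\,\frac{ds}{s}\,d\mu(w) \le C\delta<\infty ,
\]
together with the upper Ahlfors bound, implies $T^k$ is $k$-rectifiable. Alternatively, $T$ is locally a bi-Lipschitz image of a plane by the parametrization constructed in Step 3.

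\textbf{Main obstacle.} The hard part is the induction in Step 3. The Reifenberg theorem needs the upper mass bound a priori, at exactly the scale where it is being proved, so the argument must be closed by the usual maximal bad-scale contradiction. Getting the error sharp at order $\sqrt\delta$ requires the telescoped bound $\sum\beta^2\le C\delta$, not merely the pointwise bound $\beta\le\delta$ coming from (a2).
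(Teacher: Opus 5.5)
Your architecture matches the paper's. The paper defers to Chow--Jiang--Naber, \S 8.2, with the $L^2$ subspace approximation (Theorem~\ref{A:thm:L2-subspace-approx}, your Step~2) as the only varifold-specific input. It then runs a Reifenberg-type theorem inductively over scales and telescopes (a4). Some steps, though, do not hold as you wrote them.

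\textbf{The lower bound.} Condition (a2) only gives $C\delta$-Reifenberg flatness at each scale. That yields a bi-H\"older parametrization, not a bi-Lipschitz one with constant $1+C\sqrt\delta$. A bi-Lipschitz map would need square-summable flatness along every fixed center (Toro's condition). Your pinching estimate controls $\beta^2$ only after integrating against $\mu$. For fixed $x$, the sum $\sum_s s^{-k}\int_{B_s(x)}|\theta_{4s}-\theta_s|\,d\mu$ samples different points at each scale, and (a4) does not control it pointwise. This is consistent with the paper invoking the $W^{1,p}$-Reifenberg theorem, which gives no Lipschitz bound.

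The lower bound needs no parametrization anyway. Flatness at all points and scales ((a1) below $r_y$, (a2) above) gives a Reifenberg homeomorphism $C\delta r$-close to the identity. A degree argument then shows that the $1$-Lipschitz orthogonal projection onto $L_{x,r}$ maps $T^k\cap B_r(x)$ onto a $k$-disk of radius $(1-C\delta)r$. Hence $\mathcal H^k(T^k\cap B_r(x))\ge(1-C\delta)\omega_k r^k$. Drop the bi-Lipschitz alternative for rectifiability too; your rectifiable-Reifenberg argument already suffices.

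\textbf{Step 2.} The plane along which (a3) gives $(k,\delta)$-symmetry is not known to be $\mathrm{span}\{v_1,\dots,v_k\}$. So your lower bound on $\int|\pi^\perp_{T_yM}v_{k+1}|^2$ alone is unjustified. Instead use
\[
\lambda_{k+1}\sum_{j\le k+1}\int|\pi^\perp_{T_yM}v_j|^2\le\sum_{j\le k+1}\lambda_j\int|\pi^\perp_{T_yM}v_j|^2 .
\]
Then apply the failure of $(k+1,\varepsilon)$-symmetry to $V=\mathrm{span}\{v_1,\dots,v_{k+1}\}$. The radial term $r^{-2}|\pi^\perp_{T_yM}\pi_{V^\perp}(y-x)|^2$ is absorbed by the $(0,2\delta)$-symmetry at $x$, which follows from the $(k,\delta)$-symmetry.

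\textbf{Step 3.} The discrete Reifenberg theorem only gives a dimensional bound $D(n)r^k$. The sharp $(1+o(1))\omega_k r^k$ upper bound comes from the rectifiable (or $W^{1,p}$) Reifenberg theorem applied to $\mathcal H^k$ on $T^k$. Also, $C\sqrt\delta$ is weaker than $C\delta$, not an improvement. It appears because a square function of size $\lesssim\delta$ controls tilting only to order $\sqrt\delta$.
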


The only input  for the proof of the annular structure theorem which is specific to stationary varifolds  is the $L^2$ subspace approximation theorem of \cite{Naber2020}. 
We recall that for any measure  $\mu$ on $\R^{n+m}$ and $k\in \N$,   the $L^2$ Jones $\beta$-number is defined by 
\begin{equation}
    \beta_k(x,r;\mu)^2 = \inf_{L^k \text{ affine $k$-plane} } r^{-k-2}\int_{B_r(x)} \dist(y,L^k)^2 d\mu(y)
\end{equation}

We can now state the $L^2$ Subspace Approximation Theorem, the proof of which is a straightforward modification of the arguments in \cite{ChowJiangNaber} (see also \cite{Naber2020}).
\begin{thm}[$L^2$ Subspace Approximation] \label{A:thm:L2-subspace-approx}
     Let $M$ be a stationary integral varifold in $B_2$ with $|M|(B_2)\leq \Lam$. Assume that $M$ is $(k,\delta)$-symmetric  on $B_{10r}(x)\subset B_2$ but not $(k+1,\eps)$-symmetric on $B_{2r}(x)$. Then for $\delta\leq \delta(n,m,\eps)$, there exists $C=C(n,m,\Lam,\eps)>0$ such that for any finite measure $\mu$ on $B_1$, we have 
     \begin{equation}
         \beta_k(x,r;\mu)^2 \leq C r^{-k} \int_{B_r(x)} |\theta_{4r}(y) - \theta_{r}(y)|d\mu(y).
     \end{equation}
\end{thm}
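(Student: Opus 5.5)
We follow the Naber--Valtorta approach: bound the Jones number by testing the first variation against affine-linear vector fields along the eigenvectors of the second moment of $\mu$. By scaling and translation we take $x=0$, $r=1$, and by homogeneity we normalize $\mu(B_1)=1$ (if $\mu(B_1)=0$ there is nothing to prove). Let $\bar y$ be the $\mu$-center of mass in $B_1$ and let
\[
Q(v,v)=\int_{B_1}\langle y-\bar y,v\rangle^2\,d\mu(y)
\]
be the covariance form, with eigenvalues $\lambda_1\ge\cdots\ge\lambda_{n+m}$ and orthonormal eigenvectors $v_1,\ldots,v_{n+m}$. The standard identity gives
\[
\beta_k(0,1;\mu)^2\le \sum_{j>k}\lambda_j\le (n+m)\lambda_{k+1},
\]
with the affine plane $\bar y+\Span\{v_1,\ldots,v_k\}$. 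It therefore suffices to bound $\lambda_j$ for every $j\le k+1$ by $C\int_{B_1}|\theta_4-\theta_1|\,d\mu$.

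\textbf{Key identity.} Since $Q v_j=\lambda_j v_j$, we have $\lambda_j v_j=\int (y-\bar y)\langle y-\bar y,v_j\rangle\,d\mu(y)$. Fix a cutoff $\phi$ supported in $B_{3/2}$ with $\phi\equiv1$ on $B_{5/4}$. For $z\in\spt M\cap B_{3/2}$, apply $\pi^\perp_{T_zM}$ to $\int (z-y)\langle y-\bar y,v_j\rangle\,d\mu(y)$. The term $\int (z-\bar y)\langle y-\bar y,v_j\rangle\,d\mu=0$ because $\bar y$ is the centroid, so
\[
\lambda_j\,\pi^\perp_{T_zM}v_j=\int \pi^\perp_{T_zM}(z-y)\,\langle y-\bar y,v_j\rangle\,d\mu(y).
\]
By Cauchy--Schwarz ($|y-\bar y|\le 2$), then integrating $\phi^2\,d|M|(z)$ and using Fubini with the monotonicity formula, Proposition~\ref{prop:monotonicity} (\ref{eq:monotonicity-pinching} at center $y$, valid since $B_{5/2}(y)\subset B_{10}$),
\[
\lambda_j^2\int \phi^2|\pi^\perp_{T_zM}v_j|^2\,d|M|(z)\le 4\lambda_j\int\!\!\int_{B_{3/2}}|\pi^\perp_{T_zM}(z-y)|^2\,d|M|(z)\,d\mu(y)\le C\lambda_j\int|\theta_4-\theta_1|\,d\mu.
\]
Hence for each $j$,
\[
\lambda_j\int_{B_1}|\pi^\perp_{T_zM}v_j|^2\,d|M|\le C\int|\theta_4-\theta_1|\,d\mu.
\]
The same estimate with $Z=\pi^\perp(z-y)$ also controls the radial defect with respect to $\bar y$: averaging in $y$ and using the linearity $z-\bar y=\int(z-y)\,d\mu(y)$ gives $\int_{B_1}|\pi^\perp_{T_zM}(z-\bar y)|^2\,d|M|\le C\int|\theta_4-\theta_1|\,d\mu$.

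\textbf{Non-degeneracy (the main obstacle).} We need a uniform lower bound
\[
\sum_{j=1}^{k+1}\int_{B_1}|\pi^\perp_{T_zM}v_j|^2\,d|M|\ \ge\ c(n,m,\Lambda,\eps)>0
\]
for every $(k+1)$-plane $W=\Span\{v_1,\ldots,v_{k+1}\}$. Granting this, some $j_0\le k+1$ has $\int|\pi^\perp v_{j_0}|^2\ge c/(k+1)$, and $\lambda_{k+1}\le\lambda_{j_0}$ finishes the proof. For the lower bound we argue by contradiction and compactness. If it failed along a sequence with $\delta_i\to0$, the varifolds $M_i$ converge to a stationary integral $M_\infty$ and $W_i\to W$. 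The defect $\int_{B_1}|\pi^\perp|_W|^2$ tends to $0$, so $M_\infty$ is translation invariant along $W$ in $B_1$.

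This alone does not contradict the failure of $(k+1,\eps)$-symmetry on $B_2$: that definition also requires the conical defect $|\pi^\perp\pi_{W^\perp}(y-x)|^2$ to be small at scale $2$, and the bound we have lives only on $B_1$. To close the gap, use the $(k,\delta_i)$-symmetry on $B_{10}$. In the limit this makes $M_\infty$ conical at $0$ with a $k$-dimensional spine $V$ on $B_{10}$, by Lemma~\ref{lem:(k,0)-symmetry-cone}. A cone that is translation invariant along $W$ in $B_1$ is translation invariant along $W$ everywhere by homogeneity, and its spine then contains $V+W$. Since $\dim(V+W)\ge k+1$, $M_\infty$ is $(k+1,0)$-symmetric on $B_2$. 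Varifold convergence then makes $M_i$ $(k+1,\eps)$-symmetric on $B_2$ for large $i$, a contradiction.

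The delicate points are the passage of the symmetry integrals to the limit and the requirement that $\delta\le\delta(n,m,\eps)$ be chosen uniformly, so that both the cone rigidity and the non-symmetry hypothesis survive in the limit. These are also where the constant $C$ acquires its dependence on $\Lambda$ and $\eps$.
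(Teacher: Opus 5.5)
Your argument is correct in its main lines. It follows the Naber--Valtorta template the paper defers to; the paper gives no proof beyond citing \cite{ChowJiangNaber,Naber2020}. The steps are: covariance eigenvectors of $\mu$, then $\beta_k^2\le (n+m)\lambda_{k+1}$, then the centroid identity with Cauchy--Schwarz and monotonicity to get $\lambda_j\int|\pi^\perp_{T_zM}v_j|^2\,d|M|\le C\int(\theta_4-\theta_1)\,d\mu$, and finally a non-degeneracy bound. Two small slips are harmless. The factor $\lambda_j$ on the right comes from $\int\langle y-\bar y,v_j\rangle^2d\mu=\lambda_j$, not from $|y-\bar y|\le 2$. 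The sign error in the identity disappears on squaring.

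Where you diverge is the non-degeneracy, which you prove by compactness plus cone homogeneity. This can be made to work. The rescalings have a uniform mass bound by monotonicity: you may assume $B_r(x)$ meets $B_1$, which forces $\dist(x,\partial B_2)>4/5$. Since the limit is $(k+1,0)$-symmetric on all of $B_{10}$, upper semicontinuity on $\overline B_2$ transfers the symmetry back. However, compactness yields $\delta=\delta(n,m,\Lambda,\eps)$, not the $\delta(n,m,\eps)$ of the statement. So your closing claim of uniformity is not something your argument delivers.

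With the integral notion of symmetry used in this paper, none of this is needed. The scale mismatch you identify comes from your cutoff at $B_{3/2}$. For $y\in B_1$ one has $B_2\subset B_3(y)$, so \eqref{eq:monotonicity-pinching} at center $y$ with radius $2$ and $\lambda=3/2$ gives
\[
\int_{B_2}|\pi^\perp_{T_zM}(z-y)|^2\,d|M|(z)\le C(n)\bigl(\theta_4(y)-\theta_2(y)\bigr).
\]
Hence $\lambda_j\int_{B_2}|\pi^\perp_{T_zM}v_j|^2\,d|M|\le C(n)\int(\theta_4-\theta_1)\,d\mu$, on the very ball where the non-symmetry hypothesis lives.

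Now put $W=\Span\{v_1,\dots,v_{k+1}\}$ and let $L$ be the $k$-plane of the $(k,\delta)$-symmetry. On $B_2$ one has $|\pi^\perp_{T_zM}\pi_{W^\perp}z|^2\le 2|\pi^\perp_{T_zM}z|^2+8|\pi^\perp_{T_zM}|_W|^2$. Splitting $z=\pi_Lz+\pi_{L^\perp}z$, the $(k,\delta)$-symmetry gives $\int_{B_2}|\pi^\perp_{T_zM}z|^2\,d|M|\le C(n)\delta$. Testing the failure of $(k+1,\eps)$-symmetry on $B_2$ with $W$ therefore gives
\[
\omega_n2^n\eps\le 3\int_{B_2}|\pi^\perp_{T_zM}|_W|^2\,d|M|+C(n)\delta .
\]
So for $\delta\le c(n)\eps$,
\[
c(n)\eps\,\lambda_{k+1}\le\lambda_{k+1}\sum_{j=1}^{k+1}\int_{B_2}|\pi^\perp_{T_zM}v_j|^2\,d|M|\le\sum_{j=1}^{k+1}\lambda_j\int_{B_2}|\pi^\perp_{T_zM}v_j|^2\,d|M|\le C(n)\int(\theta_4-\theta_1)\,d\mu .
\]
This proves the theorem with $\delta=c(n)\eps$ and $C=C(n,m)\eps^{-1}$. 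It uses only the conical part of the hypothesis, no mass bound, and no compactness.

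The trade-off is this. Compactness is the robust tool when symmetry is defined pointwise or in Hausdorff terms. The $L^2$ definition adopted here turns the non-degeneracy into an elementary inequality with explicit constants, matching the stated $\delta(n,m,\eps)$.
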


Given the $L^2$ Subspace Approximation, the proof of Theorem~\ref{A:thm:annular-structure} follows exactly the argument given in \cite[Chapter 8.2]{ChowJiangNaber},   applying the $W^{1,p}$ and subspace Reifenberg theorem of \cite{NaberValtorta2017}, \cite[Chapter 6]{ChowJiangNaber}. 
Indeed, the rest of the proof depends only on the properties of the annular region and the summing of the monotone quantity $\theta_r(x)$ which are available for stationary varifolds as well.

\subsection{Annular Decomposition Theorem}
We first recall the quantitative stratification.
\begin{defn}
    Let $M$ be a stationary integral varifold in $B_2$. We define 
    \begin{equation}
    \begin{gathered}
            \sS_{\eps,r}^k (M) = \left\{ x\in B_1\ssep \text{ for all }r\leq s<1, M\text{ is not $(k+1,\eps)$-symmetric on }B_s(x)  \right\},        \\
           \sS_\eps^k (M) = \left\{ x\in B_1\ssep \text{ for all }0<s<1, M\text{ is not $(k+1,\eps)$-symmetric on }B_s(x)  \right\}.
    \end{gathered}
    \end{equation}
    and note that $\sS_\eps^k =\sS_{\eps,0}^k = \bigcap_{r>0}\sS_{\eps,r}^k$.
\end{defn}

We  now state the annular decomposition theorem for all $k$-strata. We can in fact obtain a stronger statement than Theorem~\ref{thm:annular-decomposition}, which is obtained for harmonic maps in \cite{ChowJiangNaber}. To state it, we recall the definition of packing content
\begin{equation}
    \sP_r^k(S) = \sup \left\{ \sum_i \ome_kr_i^k \ssep x_i\in S \text{ with } \{B_{r_i}(x_i)\} \text{ disjoint, and $r_i\leq r$ } \right\}
\end{equation}
for any subset $S\subset \R^{n+m}$.

\begin{thm}[Annular Decomposition] \label{A:thm:annular-decomp} 
    Let $M$ be a stationary integral varifold in $B_{64}\subset \R^{n+m}$ with $|M|(B_{64})\leq \Lam$. Then for each $\eps>0$ and $\delta\leq \delta(n,m,\Lam,\eps)$ there exists $C=C(n,m,\Lam,\eps,\delta)>0$ and $\eps'=\eps'(n,m,\Lam,\eps)$ such that we can write 
    \begin{equation}
    \begin{gathered}
         B_1(0) \subset \sS^k_\eps \cup \bigcup_{b} B_{r_b}(x_b)\cup \bigcup_a \left( \Ann_a \cap B_{r_a}(x_a) \right) \\
         \sS_\eps^k\cap B_1(0)\subset  \sT^k\cup \tilde{\sS}_\eps^k:= \bigcup_a \left( T_a \cap B_{r_a}(x_a)\right) \cup \tilde{\sS}_\eps^k,
    \end{gathered}
    \end{equation}
    where

    \begin{itemize}
        \item [(1)] $\cA_a\subset B_{16r_a}(x_a)$ is a $(k,\eps',\delta)$-annular region with approximate singular set $T_a$.
        \item [(2)] Each $B_{16r_b}(x_b)$ is $(k+1,\eps)$-symmetric.
        \item [(3)] We have content bounds  $\sum_a r^k_a + \sum_b r_b^k\leq C$.
        \item [(4)] $\sT^k = \bigcup_a\left(T_a\cap B_{r_a}(x_a) \right)$ is a union of $k$-dimensional topological submanifolds, with the packing estimate $\sP_1^k(\sT^k)\leq C$.
        \item [(5)] For the residual singular set $\td \sS^k_\eps$, we have the packing estimate $\sP_1^k(\td \sS^k_\eps)\leq C$ and the measure vanishes: $\cH^k(\td \sS_\eps^k)=0$.
        \item [(6)] For every $0\leq r<1$ we have that $\sS_{\eps,r}^k\subset B_{Cr}\left(\sT^k\cup \td\sS_\eps^k\right)$.
    \end{itemize}
\end{thm}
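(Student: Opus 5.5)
The plan is to follow the scheme of \cite{ChowJiangNaber} (and \cite{NaberValtorta2019YangMills, JiangNaber2021}). Run an iterative covering argument on $B_1$, organized by the first scale at which the symmetry pinching fails. We fix $\eps$ and then choose $\eps'=\eps'(n,m,\Lam,\eps)\ll\eps$ by a cone-splitting argument. This choice guarantees the following: if $M$ is $(k,\delta)$-symmetric on $B_r(x)$ but fails to be $(k+1,\eps')$-symmetric on $B_r(x)$, then on all nearby balls of comparable size it still fails to be $(k+1,\eps)$-symmetric, uniformly. This gives the quantitative gap needed for the Annular Structure Theorem~\ref{A:thm:annular-structure} to apply with constants depending only on $(n,m,\Lam,\eps)$.

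\emph{Step 1: Classify balls.} For each $x$ and scale $r$, define the pinching $\theta_{2r}(x)-\theta_{\gamma r}(x)$ for a small $\gamma$. A ball $B_r(x)$ is \emph{good} if it is $(k+1,\eps)$-symmetric, so it becomes a $b$-ball after shrinking by a factor $16$. It is \emph{bad} otherwise. On bad balls, we use the $L^2$ Subspace Approximation Theorem~\ref{A:thm:L2-subspace-approx} together with cone splitting. These show that the points with small pinching cluster near a $k$-plane, namely within $\delta r$ of some $L^k_{x,r}$.

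\emph{Step 2: Build the annular regions.} Starting from a bad ball, we stop-time along scales. At each point $x$, let $r_x$ be the first scale below which either (a3) fails or the total pinching $\theta_{2r_a}(x)-\theta_{r_x}(x)$ exceeds $\delta$. We smooth $r_x$ to be $2$-Lipschitz. We then build $T^k$ by the Reifenberg-type construction of \cite[Ch.~6--8]{ChowJiangNaber}: glue the approximating planes $L^k_{x,r}$ across scales via a partition of unity to get a bi-Lipschitz, $\delta$-graphical submanifold at scale $r_x$. This verifies (a1)--(a4). Finally, Theorem~\ref{A:thm:annular-structure} gives Ahlfors regularity for $T^k$.

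\emph{Step 3: Iterate and sum.} Each annular region leaves behind its inner balls $B_{r_x}(x)$, which are either good or have dropped pinching by $\delta$. Because $\theta$ is bounded by $C(\Lam)$, a point can lie in at most $C(\Lam)/\delta$ successive generations of bad balls before its density stabilizes. The resulting packing bound (3) comes from three ingredients:
\begin{itemize}
\item[(i)] the Ahlfors bound $\cH^k(T\cap B_r)\le C r^k$ at each generation;
\item[(ii)] a Vitali covering of the inner balls, whose $k$-content is controlled by $\cH^k(T)$;
\item[(iii)] the finite number of generations.
\end{itemize}
This is the Naber--Valtorta style sharp content estimate; to avoid exponential growth in the generation count, we use the refined packing argument via the $L^2$ best-approximation and Reifenberg inequality, rather than a naive per-generation constant. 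The residual set $\td\sS^k_\eps$ consists of points contained in infinitely many generations. It has finite packing content by the same bound, and it has $\cH^k$-measure zero because at $\cH^k$-a.e. such point the pinching sums must converge. Conclusion (6) follows because any $x\in\sS^k_{\eps,r}$ is never covered by a good ball of radius $\ge r$, so it lies within $Cr$ of the centers of some $T_a$ or of $\td\sS$.

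\emph{Main obstacle.} The hardest step is the content bound (3)--(4), which requires controlling the constant uniformly over an unbounded number of iterations. This is exactly where the $W^{1,p}$/discrete Reifenberg theorem of \cite{NaberValtorta2017} enters, using the subspace approximation inequality summed over scales via monotonicity of $\theta_r$. We will not redo this: we will check that the only varifold-specific input is Theorem~\ref{A:thm:L2-subspace-approx} and the monotonicity formula Proposition~\ref{prop:monotonicity}, and then import the combinatorics of \cite{ChowJiangNaber} verbatim.
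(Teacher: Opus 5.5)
Your plan (import the covering combinatorics of \cite[Ch.~9]{ChowJiangNaber} and verify only the varifold-specific inputs) is also the paper's plan. The gap is that you have misidentified those inputs, and identifying and proving them is the whole content of the paper's proof. Theorem~\ref{A:thm:L2-subspace-approx} is what feeds the Annular Structure Theorem. The decomposition argument additionally cites the Chapter~4 facts relating $\theta_r$ to symmetry, and the paper proves a varifold version of each:
\begin{itemize}
\item the spatial-variation estimate, which bounds $\partial_v\theta_r(x)$ by the $L^2$ norm of $\pi^\perp_{T_yM}v$ on $B_r(x)$ and hence gives near-constancy of $\theta_r$ along approximately symmetric directions;
\item quantitative differentiation;
\item quantitative dimension reduction;
\item quantitative cone splitting;
\item transfer of $(k,\eps)$-symmetry across scales on $(0,\delta)$-symmetric balls, via Lemma~\ref{A:lem:radial-comparison}.
\end{itemize}
The spatial-variation and radial-comparison estimates do not follow from Proposition~\ref{prop:monotonicity}. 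They come from testing stationarity with the different fields $\vphi_r(y-x)v$ and $(1-|y-sx|^2/(a^2s^2))_+\,y$. Without these, a ``verbatim'' import is not justified, and their absence shows in your sketch. For (6), for example, you need that a point of $\Ann_a$ at distance $d\gg r$ from $T_a$ has a $(k+1,\eps)$-symmetric ball of radius $\ge r$ \emph{centered at itself}. That is quantitative dimension reduction applied to the $(k,\delta)$-symmetric balls centered on $T_a$. Your remark that the point is ``never covered by a good ball'' does not give it, since membership in $\sS^k_{\eps,r}$ concerns only balls centered at the point.

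Two specific claims are also wrong.
\begin{itemize}
\item Your property of $\eps'$ is reversed. With $\eps'\ll\eps$, failing $(k+1,\eps')$-symmetry is the weaker condition, so it cannot force failure of $(k+1,\eps)$-symmetry on nearby balls: a ball with symmetry defect $2\eps'$ has neighbours with defect far below $\eps$. The true implication is the converse: if $B_r(x)$ is not $(k+1,\eps)$-symmetric and the pinching at $x$ is small, then nearby balls $B_s(x')$ with $s\sim r$ are not $(k+1,\eps')$-symmetric. This is what is needed to propagate (a3) from the points where the stopping rule is tested to all of $T$. Proving it means moving both center and scale, which uses exactly the spatial-variation and scale-transfer results above, not cone splitting.
\item Your residual set is inconsistent. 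You bound the number of generations through a point by $C(\Lam)/\delta$, then define $\td\sS^k_\eps$ as the points lying in infinitely many generations. Also, ``the pinching sums converge $\cH^k$-a.e.'' proves nothing, since by monotonicity they converge at every point. In this kind of construction the residual set comes from balls that your dichotomy ``good or dropped pinching'' omits: those where (a3) fails because $(k,\delta)$-symmetry is lost, so the low-pinching points concentrate near a $(k-1)$-plane with no pinching drop. The conclusion $\cH^k(\td\sS^k_\eps)=0$ then follows from the geometric decay of the $k$-content of these balls under iteration.
\end{itemize}
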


An inspection of the proof of the Annular Decomposition Theorem for harmonic maps in \cite[Chapter 9]{ChowJiangNaber} shows that the proof of Theorem~\ref{A:thm:annular-decomp} depends only on several properties of the regularized energy $\theta_r(x)$ and its relationship to $(k,\delta)$-symmetry in \cite[Chapter 4]{ChowJiangNaber}, as well as the Annular Structure Theorem.
Once these facts are established, the proof in \cite[Chapter 9]{ChowJiangNaber} applies word-for-word replacing ``harmonic map" for ``varifold" and  replacing each of the cited statements in \cite[Chapter 4]{ChowJiangNaber} by the corresponding statements below.

Let us now prove each of the properties of $\theta_r(x)$ for stationary varifolds which are invoked in \cite[Chapter 9]{ChowJiangNaber}.
We first note an immediate consequence of \ref{prop:monotonicity}, which replaces \cite[Corollary 4.5]{ChowJiangNaber}
\begin{cor}
    Let $M$ be a stationary integer rectifiable varifold in $B_2$. If $B_{2r}(x)\subset B_2$ satisfies $|\theta_{2r}(x)-\theta_r(x)|\leq \delta$, then $M$ is $(0,C(n)\delta)$-symmetric on $B_r(x)$.
\end{cor}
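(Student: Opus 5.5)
The plan is to read the corollary directly off the pinching inequality \eqref{eq:monotonicity-pinching} in Proposition~\ref{prop:monotonicity}, together with the definition of $(k,\delta)$-symmetry in the case $k=0$. When $k=0$ the subspace $L^0=\{0\}$, so the first term $|\pi_{T_yM}^\perp|_{L}|^2$ vanishes and $\pi_{L^\perp}$ is the identity. Hence $(0,\delta')$-symmetry of $M$ on $B_r(x)$ reduces to the single inequality
\begin{equation*}
\frac{1}{\ome_n r^n}\int_{B_r(x)} r^{-2}\left|\pi_{T_yM}^\perp(y-x)\right|^2 d|M|(y) < \delta'.
\end{equation*}
So it suffices to bound this quantity by $C(n)\,|\theta_{2r}(x)-\theta_r(x)|$.

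The first step is to apply \eqref{eq:monotonicity-pinching} with $\lambda=1$. This is admissible because $B_{2r}(x)\subset B_2$ lies in the region where $M$ is stationary, and the proposition is stated for balls inside the domain of stationarity. It yields
\begin{equation*}
\theta_{2r}(x)-\theta_r(x)\geq \frac{C(n,1)}{r^{n+2}}\int_{B_r(x)}\left|\pi_{T_yM}^\perp(y-x)\right|^2 d|M|(y).
\end{equation*}
Dividing by $C(n,1)\,\ome_n$ gives
\begin{equation*}
\frac{1}{\ome_n r^{n+2}}\int_{B_r(x)}\left|\pi_{T_yM}^\perp(y-x)\right|^2 d|M| \leq \frac{\delta}{C(n,1)\,\ome_n}.
\end{equation*}
Here $\theta_{2r}-\theta_r\ge 0$ by monotonicity, so it equals its absolute value and is at most $\delta$.

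Setting $C(n):=2/(C(n,1)\ome_n)$, or any constant strictly larger than $1/(C(n,1)\ome_n)$ so that the strict inequality in the definition holds when $\delta>0$, we conclude that $M$ is $(0,C(n)\delta)$-symmetric on $B_r(x)$. The degenerate case $\delta=0$ is handled by Lemma~\ref{lem:(k,0)-symmetry-cone}, or by interpreting the definition with a non-strict inequality.

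There is no real obstacle here. The only point needing care is checking that the constant in \eqref{eq:monotonicity-pinching} for $\lambda=1$ depends only on $n$. This holds because it comes from integrating the monotonicity identity $r\frac{d}{dr}\theta_r$ over $s\in[r,2r]$ and using $1-|y-x|^2/s^2\ge 0$ on $B_r(x)\subset B_s(x)$, with the weight $s^{-n-3}\ge (2r)^{-n-3}$.
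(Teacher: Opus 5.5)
Your proposal is correct and is the same argument as the paper, which records this corollary as an immediate consequence of the pinching inequality \eqref{eq:monotonicity-pinching} in Proposition~\ref{prop:monotonicity} with $\lambda=1$; for $k=0$ the definition of symmetry reduces exactly to the quantity $\frac{1}{\ome_n r^{n+2}}\int_{B_r(x)}|\pi_{T_yM}^\perp(y-x)|^2\,d|M|$ that this inequality bounds. Integrating $\frac{d}{ds}\theta_s$ over $[r,2r]$ gives the explicit bound $2^{n+2}\delta$ for this quantity. One small inaccuracy in your last paragraph: the integrand of $s\frac{d}{ds}\theta_s$ carries no weight $1-|y-x|^2/s^2$, so shrinking $B_s(x)$ to $B_r(x)$ uses only that $|\pi_{T_yM}^\perp(y-x)|^2\geq 0$.
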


We next study the behavior of $\theta_r(x)$ as $x$ varies. The following proposition replaces \cite[Theorem 4.17]{ChowJiangNaber} and \cite[Corollary 4.20]{ChowJiangNaber}. 
For convenience let us set $\vphi(x) = (1-|x|^2)_+$ and $\vphi_r(x) = \vphi\left(\frac{x}{r}\right)$ so that $\theta_r(x) = \frac{1}{\ome_n r^n}  \int \vphi_r(y-x) d|M|(y)$. Note that the argument below allows us to replace $\vphi$ by other kernels in the definition of regularized density.
\begin{prop}[Spatial Variations]
    Let $M$ be a stationary integer rectifiable varifold with $|M|(B_2)\leq \Lam$. Then the following hold:
    \begin{enumerate}
        \item [(1)] $\theta_r(x)$ is Lipschitz in $x$ and satisfies for every unit vector $v\in \R^{n+m}$,
        \begin{equation}
           | \ptl_v \theta_r(x) | \leq \frac{C(n)\Lam^{1/2}}{r}\left(  \frac{1}{\ome_n r^n}\int_{B_r(x)} \left| \pi_{T_yM}^\perp(v)\right|^2 d|M|(y)\right)^{1/2}.
        \end{equation}

        \item [(2)] For every \(\epsilon>0\) and \(\rho\in(0,1]\), there exists
    \( \delta=\delta(n,m,\Lambda,\epsilon,\rho)>0\)
    such that the following holds. If a \(k\)-dimensional linear subspace \(L^k\) satisfies
    \begin{equation}\frac{1}{\omega_n2^n}\int_{B_2}\left|\pi_{T_zM}^{\perp}|_{L^k}\right|^2\,d|M|(z)<\delta,\end{equation}
    then
    \( |\theta_r(x)-\theta_r(y)|<\epsilon\)
    for every \(r\in[\rho,1]\) and every \(x,y\in B_1\) with \(x-y\in L^k\).
    \end{enumerate}
\end{prop}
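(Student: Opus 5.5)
For (1), I will differentiate under the integral sign, $\theta_r(x) = \frac{1}{\ome_n r^n}\int \vphi_r(y-x)\,d|M|(y)$, and use stationarity to trade the ambient derivative for a normal one. Since $\vphi_r$ is Lipschitz with compact support, Lipschitz continuity in $x$ is immediate, and
\[
\ptl_v\theta_r(x) = -\frac{1}{\ome_n r^n}\int \langle \nabla\vphi_r(y-x), v\rangle\, d|M|(y).
\]
Next I split $\nabla\vphi_r = \pi_{T_yM}\nabla\vphi_r + \pi^\perp_{T_yM}\nabla\vphi_r$. The tangential part is handled by the first variation formula applied to the constant field $X\equiv \vphi_r(y-x)\,v$. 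This gives
\[
\int \langle \nabla^M \vphi_r(y-x), v\rangle\, d|M| = -\int \vphi_r \,\mathrm{div}_M v\, d|M| = 0,
\]
so only the normal part survives:
\[
\ptl_v\theta_r(x) = -\frac{1}{\ome_n r^n}\int \langle \nabla\vphi_r(y-x), \pi^\perp_{T_yM} v\rangle\, d|M|(y).
\]
Because $|\nabla\vphi_r|\le 2/r$ and the integrand is supported in $B_r(x)$, Cauchy--Schwarz gives the bound
\[
\frac{2}{r}\Big(\frac{|M|(B_r(x))}{\ome_n r^n}\Big)^{1/2}\Big(\frac{1}{\ome_n r^n}\int_{B_r(x)}|\pi^\perp_{T_yM}v|^2\,d|M|\Big)^{1/2}.
\]
The remaining task is to control the mass ratio by $C(n)\Lam$ using monotonicity, since $B_r(x)$ sits inside $B_2$ with mass at most $\Lam$. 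This works when $B_r(x)$ is well inside; otherwise the statement is read with the implicit normalization $r\le 1$, $x\in B_1$. This is where the factor $\Lam^{1/2}$ comes from.

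For (2), I will integrate (1) along the segment from $y$ to $x$, which lies in $B_1$ and points in a direction $v\in L^k$ with $|x-y|\le 2$:
\[
|\theta_r(x)-\theta_r(y)| \le \int_0^1 |\ptl_v\theta_r(y+t(x-y))|\,|x-y|\,dt.
\]
Here $|\pi^\perp_{T_zM}v|\le |\pi^\perp_{T_zM}|_{L^k}|\,|v|$. Since $r\ge\rho$, the prefactor in (1) is at most $C\Lam^{1/2}\rho^{-1}$. Every ball $B_r(p)$ with $p$ on the segment satisfies $B_r(p)\subset B_2$, and on it
\[
\frac{1}{\ome_n r^n}\int_{B_r(p)}\big|\pi^\perp|_{L^k}\big|^2 \le \frac{2^n}{\rho^n}\,\delta.
\]
Altogether this gives
\[
|\theta_r(x)-\theta_r(y)|\le C(n)\Lam^{1/2}\rho^{-1-n/2}\delta^{1/2},
\]
which is less than $\eps$ once $\delta$ is chosen small depending on $n,\Lam,\eps,\rho$. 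A compactness argument is therefore unnecessary, although it would also work.

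The only delicate point is the first-variation cancellation in (1), in particular the sign conventions and the fact that $\vphi_r$ is merely Lipschitz at $|y-x|=r$. I will handle the latter by approximating $\vphi$ with smooth kernels, or by noting that the first variation formula holds for Lipschitz compactly supported fields. The same argument applies verbatim to any Lipschitz radial kernel, as the remark before the statement indicates.
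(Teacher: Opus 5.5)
Your proposal is correct and follows the paper's proof essentially line for line. For (1), you test stationarity with the field $\varphi_r(\cdot-x)v$ to kill the tangential part of $\nabla\varphi_r$, then apply Cauchy--Schwarz and monotonicity; for (2), you integrate (1) along the segment using the bound $2^n\rho^{-n}\delta$ and obtain the same estimate $C(n)\Lambda^{1/2}\rho^{-1-n/2}\delta^{1/2}$. Your remarks on the implicit normalization $x\in B_1$, $r\le 1$ and on the kernel being only Lipschitz are extra care that the paper leaves implicit.
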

\begin{proof}
    Testing stationarity with the vector field $\vphi_r(y-x) v$, we obtain by direct differentiation
    \begin{equation}
        \begin{aligned}
            \partial_v \theta_r(x) = -\frac{1}{\ome_n r^{n+1}}\int \left\langle \pi_{T_yM}^\perp(v), \nabla \vphi \left( \frac{y-x}{r}\right) \right\rangle d|M|(y).
        \end{aligned}
    \end{equation}
    since $\int \langle \pi_{T_yM}(v), \nabla \vphi_r(y-x)\rangle  d|M| = 0$. By Cauchy-Schwarz and monotonicity we obtain (1).
    For (2), we let $z_t = (1-t)x + ty$ be the segment from $x$ to $y$. At every point along this segment we have, setting $v=\frac{y-x}{|y-x|}$, 
    \begin{equation}
        \frac{1}{\ome_n r^n}\int_{B_r(z_t)} \left| \pi_{T_wM}^\perp(v)\right|^2 d|M|(w) \leq 2^n r^{-n} \delta 
    \end{equation}
    and so by integration along $z_t$ we obtain
    \begin{equation}
        \begin{aligned}
            |\theta_r(x) -\theta_r(y)|\leq  C(n) |x-y| \Lam^{1/2} \delta^{1/2} r^{-1-n/2} \leq 2 C(n)\Lam^{1/2}\delta^{1/2} \rho^{-1-n/2} 
        \end{aligned}
    \end{equation}
    Choosing $\delta$ so that $ 2 C(n)\Lam^{1/2}\delta^{1/2} \rho^{-1-n/2}<\eps$ finishes the proof.
\end{proof}

The next proposition is a form of quantitative differentiation which establishes the existence of many scales at which $M$ is quantitatively symmetric \cite[Theorem 4.22, Corollary 4.24, Corollary 4.27]{ChowJiangNaber}.
\begin{prop}[Quantitative Differentiation] \label{A:prop:quantitative-diff}
    Let $M$ be a stationary integral varifold in $B_2$ with $|M|(B_2)\leq \Lam$ and let $\eps>0$.
    \begin{enumerate}
        \item [(1)] If we define for all $x\in B_1$ the set of $\eps$-bad scales
        \begin{equation}
            \mc B(x,\eps) = \{\alpha \in \N \ssep \text{ $M$ not $(0,\eps)$-symmetric on $B_{2^{-\alpha}}(x)$ } \}
        \end{equation}
        we have the cardinality estimate $|\mc B(x,\eps)|\leq C(n)\Lam \eps^{-1}$.

        \item [(2)] There exists $r_\eps = r(n,m,\Lam,\eps)$ such that for all $x\in B_1$ there exists $r=r(x)\geq r_\eps$ such that $M$ is $(0,\eps)$-symmetric  on $B_r(x)$.

        \item [(3)] If there is a $k$-dimensional subspace $L^k$ such that 
        \begin{equation}
            \frac{1}{\ome_n2^n} \int_{B_2} |\pi^\perp_{T_yM}|_{L^k}|^2 d|M|(y) < \delta(n,m,\Lam,\eps),
        \end{equation}
        then for every $x\in B_1$ there is $r=r(x)\geq r_{\eps}(n,m,\Lam,\eps)$ such that $M$ is $(k,\eps)$-symmetric on $B_r(x)$ with respect to $L^k$. 
    \end{enumerate}
\end{prop}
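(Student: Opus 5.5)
The plan is to derive all three statements from the monotonicity of $\theta_r(x)$ (Proposition~\ref{prop:monotonicity}), using the corollary above to turn a small pinching $\theta_{2r}(x)-\theta_r(x)\leq \delta$ into $(0,C(n)\delta)$-symmetry on $B_r(x)$.

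For (1), fix $x\in B_1$ and $\alpha\geq 1$. Then $B_{2^{1-\alpha}}(x)\subset B_2$, so the corollary applies on $B_{2^{-\alpha}}(x)$. If $\alpha\in\mc B(x,\eps)$, the contrapositive of the corollary gives
\begin{equation*}
\theta_{2^{1-\alpha}}(x)-\theta_{2^{-\alpha}}(x)> \eps/C(n).
\end{equation*}
By monotonicity every increment $\theta_{2^{1-\alpha}}(x)-\theta_{2^{-\alpha}}(x)$ is nonnegative, so the sum over $\alpha\geq 1$ telescopes and is bounded by
\begin{equation*}
\theta_1(x)\leq \frac{|M|(B_1(x))}{\ome_n}\leq \frac{\Lam}{\ome_n}.
\end{equation*}
Hence $|\mc B(x,\eps)|\cdot \eps/C(n)\leq \Lam/\ome_n$, which is the claimed bound $|\mc B(x,\eps)|\leq C(n)\Lam\eps^{-1}$. (The scale $\alpha=0$, if it is included in $\N$, adds at most one to the count; the same argument handles it after shrinking the ball slightly.)

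For (2), set $N:=\lfloor C(n)\Lam\eps^{-1}\rfloor+1$. By (1), the scales $\alpha=1,\ldots,N$ cannot all be bad, so some $\alpha\leq N$ is good. We may then take $r(x)=2^{-\alpha}\geq 2^{-N}=:r_\eps$, which depends only on $n,\Lam,\eps$.

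For (3), first apply (2) with $\eps/4$ in place of $\eps$. This gives, for each $x\in B_1$, a radius $r\geq r_{\eps/4}$ with
\begin{equation*}
\frac{1}{\ome_n r^n}\int_{B_r(x)} r^{-2}|\pi^\perp_{T_yM}(y-x)|^2\,d|M|(y)<\eps/4.
\end{equation*}
Next split $\pi_{L^\perp}(y-x)=(y-x)-\pi_L(y-x)$. Since $|\pi_L(y-x)|\leq r$ on $B_r(x)$, we have pointwise
\begin{equation*}
r^{-2}|\pi^\perp_{T_yM}\pi_{L^\perp}(y-x)|^2\leq 2r^{-2}|\pi^\perp_{T_yM}(y-x)|^2+2|\pi^\perp_{T_yM}|_{L}|^2.
\end{equation*}
The tangential term is controlled using $B_r(x)\subset B_2$ and the hypothesis:
\begin{equation*}
\frac{1}{\ome_n r^n}\int_{B_r(x)}|\pi^\perp_{T_yM}|_L|^2\,d|M|\leq 2^n r_{\eps/4}^{-n}\delta .
\end{equation*}
Adding up the two pieces, the $(k,\cdot)$-symmetry functional with respect to $L$ on $B_r(x)$ is at most $\eps/2+3\cdot 2^n r_{\eps/4}^{-n}\delta$. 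This is less than $\eps$ once $\delta\leq \delta(n,m,\Lam,\eps)$ is small enough.

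The only delicate point is the order in which the constants are chosen. The lower scale bound $r_{\eps/4}$ must be fixed first, depending only on $(n,\Lam,\eps)$, and $\delta$ chosen afterwards. This ordering is what absorbs the $r^{-n}$ loss from restricting the global smallness on $B_2$ to the smaller ball $B_r(x)$. Otherwise the argument is a direct telescoping of the monotone quantity $\theta_r(x)$.
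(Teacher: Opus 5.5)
Your proposal is correct and follows essentially the same route as the paper. For (1) you telescope the monotone quantity $\theta_r(x)$ over the dyadic bad scales; for (2) you pigeonhole among the first $\lfloor C(n)\Lam\eps^{-1}\rfloor+1$ dyadic scales; and for (3) you apply (2) with $\eps/4$, split $\pi_{L^\perp}(y-x)=(y-x)-\pi_L(y-x)$, and fix $r_{\eps/4}$ before choosing $\delta$ to absorb the $r^{-n}$ loss. Restricting to $\alpha\geq 1$, so that $B_{2^{1-\alpha}}(x)\subset B_2$, is if anything slightly more careful than the paper's write-up.
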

\begin{proof}
        At every $\eps$-bad scale $\alpha$ the monotonicity formula Proposition~\ref{prop:monotonicity} gives 
        \[
        \theta_{2^{-\alpha+1}}(x) - \theta_{2^{-\alpha}}(x) \geq c(n)\eps.
        \]
        Thus,
        \begin{equation}
            \begin{aligned}
                 c(n) \eps |\mc B(x,\eps)|&\leq \sum_{\alpha \in \mc B(x,\eps)} \left(\theta_{2^{-\alpha+1}}(x) - \theta_{2^{-\alpha}}(x)\right) \leq \sum_{\alpha \in \N}\left( \theta_{2^{-\alpha+1}}(x) - \theta_{2^{-\alpha}}(x)\right)\\
                 &\leq \theta_1(x) - \theta_0(x) \leq C \Lam
            \end{aligned}
        \end{equation}
        which proves (1). (2) follows immediately by observing that there are at most $C(n)\Lam \eps^{-1}$ bad scales and so if we consider the dyadic scales $1,\frac{1}{2},\ldots, 2^{-\lfloor C(n)\Lam \eps^{-1} \rfloor -1}$ at least one of them must be $(0,\eps)$-symmetric. 
        Finally to prove (3) we apply (2) with $\eps/4$ and let $r_\eps$ be the corresponding radius. For every $y\in B_r(x)$,
        \[
        |\pi_{T_yM}^\perp \pi_{L^k}^\perp(y-x)|^2 \leq 2|\pi_{T_yM}^\perp (y-x)|^2 + 2r^2 |\pi_{T_yM}^\perp |_{L^k} |^2 
        \]
        and so 
        \begin{equation*}
            \begin{aligned}
                \frac{1}{\ome_nr^n} &\int_{B_r(x)} \left( |\pi_{T_yM}^\perp|_{L^k}|^2 + r^{-2} |\pi_{T_yM}^\perp \pi_{L^k}^\perp (y-x)|^2\right) d|M|(y) \\
                &\leq \frac{3}{\ome_n r^n} \int_{B_r(x)} |\pi_{T_yM}^\perp|_{L^k} |^2 d|M|(y) + \frac{2}{\ome_n r^{n+2}}\int_{B_r(x)} |\pi_{T_yM}^\perp (y-x)|^2 d|M|(y) \\
                &\leq 3 r_\eps^{-n} 2^{n} \delta + \frac{\eps}{2}.
            \end{aligned} 
        \end{equation*}
        Choosing $\delta\leq (\eps/6)(r_\eps/2)^n$ we obtain the result.
\end{proof}

Next, we show a form of quantitative dimension reduction which corresponds to \cite[Theorem 4.31]{ChowJiangNaber}.
\begin{prop}[Quantitative Dimension Reduction]
    Let \(M\) be a stationary integral \(n\)-varifold in \(B_2\subset\mathbb R^{n+m}\), with
    \( |M|(B_2)\leq\Lambda.\)
    For every \(\epsilon\in(0,1)\), there exist constants
    \( \delta=\delta(n,m,\Lambda,\epsilon)>0, r_\epsilon=r_\epsilon(n,m,\Lambda,\epsilon)\in(0,1)\)
    such that the following holds.
    If \(M\) is \((k,\delta)\)-symmetric on \(B_2\) with respect to \(L^k\) and  \(x\in B_1\) satisfies
    \( \operatorname{dist}(x,L^k)>\epsilon,\)
    then there exists a radius
    \(\displaystyle r=r(x)\in[r_\epsilon,1]\)
    such that \(M\) is \((k+1,\epsilon)\)-symmetric on \(B_r(x)\).
\end{prop}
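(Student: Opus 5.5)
We argue by contradiction and compactness, in the spirit of Proposition~\ref{prop:symmetry-eps-regularity}. Fix $\eps\in(0,1)$ and suppose the statement fails. Then for every $j$ there is a stationary integral varifold $M_j$ in $B_2$ with $|M_j|(B_2)\le\Lam$. Each $M_j$ is $(k,\delta_j)$-symmetric on $B_2$ with respect to some $k$-plane $L_j$, where $\delta_j\downarrow 0$. There is also a point $x_j\in B_1$ with $\dist(x_j,L_j)>\eps$ such that $M_j$ is not $(k+1,\eps)$-symmetric on $B_r(x_j)$ for any $r\in[2^{-j},1]$; this uses $r_\eps=2^{-j}$ as a candidate. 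Passing to subsequences, Allard's compactness theorem gives $M_j\to M$ as varifolds, where $M$ is stationary and integral with $|M|(B_2)\le\Lam$. We may also assume $L_j\to L$ and $x_j\to x$, with $x\in\ol B_1$ and $\dist(x,L)\ge\eps$.

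The $(k,\delta_j)$-symmetry passes to the limit. The integrand $|\pi^\perp_{T_yM}|_{L}|^2+|\pi^\perp_{T_yM}\pi_{L^\perp}(y)|^2$ is continuous on the Grassmann bundle, so varifold convergence applies. The integral over $B_2$ is handled by lower semicontinuity on open balls, or by first restricting to $B_{2-\sigma}$ and letting $\sigma\to0$. We conclude that the integral vanishes for $M$ with respect to $L$. By Lemma~\ref{lem:(k,0)-symmetry-cone}, $M$ is then a cone centered at $0$ that is translation invariant along $L$ in $B_2$.

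The key point is that such a cone is $(k+1,0)$-symmetric at small scales around any point $x$ off its spine. Take $\hat L=L\oplus\Span\{\pi_{L^\perp}x\}$, which is a $(k+1)$-plane since $\pi_{L^\perp}x\ne0$. By translation invariance along $L$, $M$ is conical about every point of $L$, in particular about $\pi_L x$. Hence the dilation field $y-\pi_Lx$ is tangent to $M$ a.e., and so is every vector in $L$. It follows that $\pi^\perp_{T_yM}(x-\pi_Lx)=\pi^\perp_{T_yM}\bigl((y-\pi_Lx)-(y-x)\bigr)=-\pi^\perp_{T_yM}(y-x)$. Using $\pi^\perp_{T_yM}\pi_{\hat L^\perp}(y-x)=\pi^\perp_{T_yM}(y-x)-\pi^\perp_{T_yM}\pi_{\hat L}(y-x)$ and $|\pi_{L^\perp}x|\ge\eps$, the symmetry integrand of $M$ on $B_r(x)$ for $\hat L$ reduces to quantities controlled by $\int_{B_r(x)}|\pi^\perp_{T_yM}(y-x)|^2\,d|M|$, up to a factor depending on $\eps$ and $r$.

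This is the main obstacle: a cone need not be conical about $x$ itself. To handle it, I would instead use the monotonicity formula at $x$. By Proposition~\ref{prop:monotonicity} and quantitative differentiation (Proposition~\ref{A:prop:quantitative-diff}(2)), there is a scale $r\ge r(n,\Lam,\eta)$ at which $M$ is $(0,\eta)$-symmetric at $x$, and Proposition~\ref{A:prop:quantitative-diff}(3) makes this uniform in $M$. Combining $(0,\eta)$-symmetry at $x$, exact $L$-invariance, and exact conicality about $\pi_Lx$, the direction $x-\pi_Lx$ has $\frac{1}{\ome_nr^n}\int_{B_r(x)}|\pi^\perp_{T_yM}(x-\pi_Lx)|^2\le C\eta$. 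This yields $(k+1,C(\eps,r)\eta)$-symmetry on $B_r(x)$ with respect to $\hat L$, and choosing $\eta$ small makes it $(k+1,\eps/2)$-symmetric.

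To close the argument, note that the $(k+1)$-symmetry quantity is continuous under varifold convergence, up to the same boundary care as above: pass to $B_{r'}$ with $r'$ slightly smaller, or use the regularized kernel. Since also $x_j\to x$, the varifold $M_j$ is $(k+1,\eps)$-symmetric on $B_r(x_j)$ for large $j$. The scale $r$ is bounded below independently of $j$, because it comes from quantitative differentiation for the limit with uniform constants. This contradicts the choice of $x_j$. Running the contradiction over all scales $r\in[2^{-j},1]$ at once gives both the existence of $\delta$ and the uniform $r_\eps$.
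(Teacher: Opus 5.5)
Your argument is correct, but it follows a different route from the paper's.

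\textbf{The paper's route.} The paper argues directly and quantitatively. With $e_{k+1}=\pi_{L^\perp}x/|\pi_{L^\perp}x|$ and $L^{k+1}=L^k\oplus\mathrm{span}\{e_{k+1}\}$, it uses the pointwise bound $|\pi^\perp_{T_yM}e_{k+1}|^2\le 2\epsilon^{-2}|\pi^\perp_{T_yM}\pi_{L^\perp}y|^2+8s^2\epsilon^{-2}$ on $B_{2s}(x)$. Integrating and using the $(k,\delta)$-symmetry on $B_2$ gives $\frac{1}{\omega_n(2s)^n}\int_{B_{2s}(x)}|\pi^\perp_{T_yM}|_{L^{k+1}}|^2\le C\epsilon^{-2}s^{-n}\delta+C\Lambda\epsilon^{-2}s^2$. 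After choosing first $s$ and then $\delta$ small, Quantitative Differentiation (3) at scale $s$ gives a radius $r\in[r's,s]$ at which $M$ is also nearly conical about $x$, hence $(k+1,\epsilon)$-symmetric. So in the paper the new direction is controlled by the smallness of the ball, and conicality at $x$ is brought in afterwards.

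\textbf{Your route.} You use compactness to reduce to an exact cone invariant along $L$. You then control the new direction by the conicality defect at $x$, through the exact identity $\pi^\perp_{T_yM}(x-\pi_Lx)=-\pi^\perp_{T_yM}(y-x)$. You find a good scale for the single limit varifold by monotonicity. Finally you transfer back to $M_j$ by upper semicontinuity on closed balls of radius slightly less than $r$; taking $r'<r$ is the correct direction for this step.

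\textbf{Comparison.} The paper's argument gives explicit dependence of $\delta$ and $r_\epsilon$ on $(n,m,\Lambda,\epsilon)$ and needs no compactness. Yours is non-effective but avoids that bookkeeping.

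\textbf{Points of precision.}
\begin{itemize}
\item The constant in your $(k+1,C(\epsilon,r)\eta)$-symmetry must be independent of $r$, or ``choose $\eta$ small'' becomes circular. It is independent: the $e_{k+1}$ term is bounded by $\epsilon^{-2}r^2\eta\le\epsilon^{-2}\eta$. You should say this explicitly.
\item You need no uniformity in $M$ for the good scale, since it is chosen for the fixed limit $M$. In fact $\theta_{2r}(x)-\theta_r(x)\to0$ as $r\to0$ already gives $(0,\eta)$-symmetry at every sufficiently small scale, even when $|x|=1$.
\item Your sentence claiming the cone is $(k+1,0)$-symmetric at small scales around $x$ is false as written, although you correct it immediately afterwards.
\end{itemize}
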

\begin{proof}
    Fix such an $x$  and let $e_{k+1} = \frac{\pi_{L^k}^\perp(x)}{|\pi_{L^k}^\perp (x)|}$ be the constant unit vector field obtained by projecting $x\in B_1\setminus \ol B_\eps(L^k)$ onto $L^\perp$ and normalizing.  
    Set $L^{k+1} = L^k \oplus \Span\{e_{k+1}\}$. For any $0<s\leq 1/4$ and $y\in B_{2s}(x)$ we can subtract $\pi_{L^k}^\perp(y-x)$ from $\pi^\perp_{L^k} y$ and use $|\pi_{L^k}^\perp x|>\eps$ to obtain,
    \[
    |\pi_{T_yM}^\perp e_{k+1}|^2\leq \frac{2}{\eps^2} |\pi_{T_yM}^\perp \pi_{L^k}^\perp y|^2 + \frac{8s^2}{\eps^2}.
    \]
    We then have 
    \begin{equation} \label{eq:A:quant-dimension-reduct-step}
        \begin{aligned}
           & \frac{1}{\ome_n(2s)^n} \int_{B_{2s}(x)} |\pi_{T_yM}^\perp |_{L^{k+1}}|^2 d|M|(y) \\
           &= \frac{1}{\ome_n(2s)^n} \int_{B_{2s}(x)} \left(|\pi_{T_yM}^\perp |_{L^{k}}|^2  + |\pi_{T_yM}^\perp e_{k+1}|^2 \right)d|M| (y)\\
            &\leq C(n) \eps^{-2} s^{-n}\delta + C(n) \Lam \eps^{-2} s^2.
        \end{aligned}
    \end{equation}
    We choose our constants in the following order. 
    Consider Proposition~\ref{A:prop:quantitative-diff}(3) with $L^{k+1}$ and $\eps$ and let $\delta'=\delta'(n,m,\Lam,\eps)$ and $r'=r'(n,m,\Lam,\eps)$ be the parameters obtained there. 
    We  choose $s=s(n,m,\Lam,\eps)\in (0,1/4]$ small enough so that $C(n)\Lam \eps^{-2} s^2 <\delta'/2$, and then choose $\delta>0$ small enough so that $C(n)\eps^{-2} s^{-n}\delta< \delta'/2$. By \ref{eq:A:quant-dimension-reduct-step} we may then apply Proposition~\ref{A:prop:quantitative-diff} at scale $s$ to obtain a radius $r\in [r's,s]$ such that $M$ is $(k+1,\eps)$-symmetric with respect to $L^{k+1}$ on $B_r(x)$, which finishes the proof.
\end{proof}

The quantitative cone splitting for varifolds is directly analogous to \cite[Theorem 4.39, Theorem 4.41]{ChowJiangNaber}
\begin{prop}[Quantitative Cone Splitting]
    Let \(M\) be an integral \(n\)-varifold in \(B_6\). Let \(0<\tau\leq1\), and suppose the points
\(\displaystyle x_0=0, x_1,\ldots,x_k\in B_1\)
are in \(\tau\)-general position. Let $L^k=\Span\{x_1-x_0,\ldots,x_k-x_0\}$.
\begin{enumerate}
    \item If \(M\) is \((0,\epsilon)\)-symmetric on each ball \(B_3(x_i)\),
    then  \(M\) is \((k,C(n,m,\tau)\epsilon)\)-symmetric on \(B_1(x_0)\) with respect to $L^k$.
    \item If $M$ is stationary and $\theta_5(x_i)-\theta_4(x_i)\leq\epsilon$ for all $i=0,\ldots, k$, then \(M\) is \((k,C(n,m,\tau)\epsilon)\)-symmetric on \(B_1(x_0)\), with respect to $L^k$.
    \end{enumerate}
        One can take $C(n,m,\tau) = C(n,m)\tau^{-2k}$ or the common constant $C(n,m) \tau^{-2n}$ for all $k\leq n$.
\end{prop}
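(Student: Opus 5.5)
The plan is to prove (1) by a direct pointwise linear-algebra argument, and then deduce (2) from (1) using the regularized monotonicity formula. No compactness is needed, and the constant comes out explicitly as $C(n,m)\tau^{-2k}$. Throughout, write $\pi^\perp=\pi^\perp_{T_yM}$.

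\emph{Step 1 (translation directions).} Since $x_0=0$ and $x_i\in B_1$, we have $B_1(x_0)\subset B_2(x_i)\subset B_3(x_i)$. The $(0,\eps)$-symmetry of $M$ on $B_3(x_i)$ says
\[
\int_{B_3(x_i)}|\pi^\perp(y-x_i)|^2\,d|M|(y)<C(n)\eps.
\]
For $y\in B_1(x_0)$, set $v_i=x_i-x_0$. Then
\[
\pi^\perp v_i=\pi^\perp(y-x_0)-\pi^\perp(y-x_i),
\qquad\text{so}\qquad
|\pi^\perp v_i|^2\le 2|\pi^\perp(y-x_0)|^2+2|\pi^\perp(y-x_i)|^2 .
\]
Integrating over $B_1(x_0)$ gives $\int_{B_1(x_0)}|\pi^\perp v_i|^2\,d|M|\le C(n)\eps$ for each $i=1,\ldots,k$.

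\emph{Step 2 (from $v_i$ to an orthonormal basis of $L^k$).} Apply Gram--Schmidt to $v_1,\ldots,v_k$ to get an orthonormal basis $e_1,\ldots,e_k$ of $L^k$. By $\tau$-general position, each step divides by a quantity at least $\tau$, and all the $v_i$ have length at most $1$. So inductively $e_j=\sum_{i\le j}c_{ji}v_i$ with $|c_{ji}|\le C(k)\tau^{-j}$. Pointwise this gives
\[
|\pi^\perp|_{L^k}|^2\le\sum_j|\pi^\perp e_j|^2\le C(n)\tau^{-2k}\sum_i|\pi^\perp v_i|^2 .
\]
Hence $\int_{B_1(x_0)}|\pi^\perp|_{L^k}|^2\le C\tau^{-2k}\eps$. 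This is the only step where $\tau$ enters, and it is the main point to get right.

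\emph{Step 3 (radial term).} Write
\[
\pi^\perp\pi_{L^\perp}(y-x_0)=\pi^\perp(y-x_0)-\pi^\perp\pi_L(y-x_0).
\]
Since $|\pi_L(y-x_0)|\le 1$ on $B_1(x_0)$, we get
\[
|\pi^\perp\pi_{L^\perp}(y-x_0)|^2\le 2|\pi^\perp(y-x_0)|^2+2|\pi^\perp|_{L^k}|^2 .
\]
Both terms on the right have already been bounded: the first by the $(0,\eps)$-symmetry on $B_3(x_0)$, the second by Step 2. Summing the two contributions gives $(k,C(n,m)\tau^{-2k}\eps)$-symmetry of $M$ on $B_1(x_0)$ with respect to $L^k$, which proves (1). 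Since $\tau\le1$, the bound $\tau^{-2k}\le\tau^{-2n}$ gives the common constant for all $k\le n$.

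\emph{Step 4 (part (2)).} By Proposition~\ref{prop:monotonicity},
\[
r\frac{d}{dr}\theta_r(x_i)\ge\frac{2}{\ome_n r^{n+2}}\int_{B_4(x_i)}|\pi^\perp(y-x_i)|^2\,d|M|(y)
\qquad\text{for } r\in[4,5].
\]
Integrating over $r\in[4,5]$ and using $\theta_5(x_i)-\theta_4(x_i)\le\eps$ yields
\[
\int_{B_4(x_i)}|\pi^\perp(y-x_i)|^2\,d|M|\le C(n)\eps .
\]
Since $B_3(x_i)\subset B_4(x_i)$, $M$ is $(0,C(n)\eps)$-symmetric on each $B_3(x_i)$. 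Applying (1) then gives (2).
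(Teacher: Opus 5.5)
Your proposal is correct and follows essentially the same route as the paper. Both arguments subtract the $(0,\eps)$-symmetry integrands at $x_i$ and $x_0$ to control $\pi^\perp(x_i-x_0)$, then use Gram--Schmidt with $\tau$-general position to get the $\tau^{-2k}$ loss on an orthonormal basis of $L^k$, then split $\pi_{L^\perp}(y-x_0)=(y-x_0)-\pi_L(y-x_0)$ for the radial term, and finally integrate the monotonicity formula over $r\in[4,5]$ to reduce (2) to (1); your Step 4 simply writes out the integration of Proposition~\ref{prop:monotonicity} that the paper calls immediate.
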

\begin{proof}
          Subtracting the vectors $y-x_i$ and $y-x_0$ we immediately obtain
          \begin{equation*}
          \begin{aligned}
               \int_{B_1(x_0)}& |\pi_{T_yM}^\perp (x_i-x_0)|^2 d|M|(y) \\
               &\leq 2 \int_{B_1(x_0)}|\pi_{T_yM}^\perp (y-x_i)|^2 d|M|(y) + 2 \int_{B_1(x_0)}|\pi_{T_yM}^\perp (y-x_0)|^2 d|M|(y) < C(n) \eps.
          \end{aligned}
          \end{equation*}
          We can apply Gram-Schmidt to $x_1-x_0,\ldots, x_k-x_0$ to obtain an orthonormal basis $e_1,\ldots, e_k$ of $L^k$. 
          Since the $x_i$ are in $\tau$-general position, from the Gram-Schmidt process it is immediate to see that
          \begin{equation*}
              |\pi_{T_yM}^\perp e_j|^2 \leq 2\tau^{-2} \left( |\pi_{T_yM}^\perp(x_j-x_0)|^2 + 4 \sum_{i<j} |\pi_{T_yM}^\perp e_i|^2\right).
          \end{equation*}
          Hence by induction,
          \begin{equation*}
              \sum_{j=1}^k |\pi_{T_yM}^\perp e_j|^2 \leq C(n,m) \tau^{-2k} \sum_{j=1}^k |\pi_{T_yM}^\perp (x_j-x_0)|^2
          \end{equation*}
          so  by integrating we obtain $\int_{B_1(x_0)}|\pi_{T_yM}^\perp|_{L^k}|^2 d|M|(y)\leq C(n,m)\tau^{-2k} \eps$. 
          For the other term in the definition of symmetry, we write $\pi_{L^k}^\perp(y-x_0) = (y-x_0) - \pi_{L^k}(y-x_0)$. Since $|y-x_0|<1$ in $B_1(x_0)$, we have 
          \begin{equation*}
              |\pi_{T_yM}^\perp \pi_{L^k}^\perp(y-x_0)|^2 \leq 2|\pi_{T_yM}^\perp (y-x_0)|^2 + 2 |\pi_{T_yM}^\perp|_{L^k}|^2.
          \end{equation*}
          Integrating we obtain (1), and the monotonicity formula immediately yields (2).
\end{proof}

Finally, we establish the corresponding statement of \cite[Theorem 4.44]{ChowJiangNaber}, which says that very 0-symmetric balls allow to pass $k$-symmetries between scales. We first need a short lemma corresponding to \cite[Corollary 4.14]{ChowJiangNaber}.
\begin{lem}\label{A:lem:radial-comparison}
    Let \(M\) be a stationary integral \(n\)-varifold in \(B_4\) with \(|M|(B_4)\leq\Lambda\), and suppose \(M\) is \((0,\delta)\)-symmetric on \(B_4\). Then for \(x\in B_1\), \(1\leq a\leq2\), and \(0<s,t\leq1\),
    \begin{equation*}
        \left|\theta_{as}(sx)-\theta_{at}(tx)\right|\leq C(n)\sqrt{\Lambda\delta}\left|s^{-(n+2)/2}-t^{-(n+2)/2}\right|
    \end{equation*}
\end{lem}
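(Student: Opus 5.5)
Along the ray $\lambda\mapsto \lambda x$, the quantity $\theta_{a\lambda}(\lambda x)$ has a $\lambda$-derivative whose kernel is exactly the radial deficit $\pi_{T_yM}^\perp(y)$ measured from the origin. The $(0,\delta)$-symmetry of $M$ on $B_4$ bounds the $L^2$ norm of that deficit. Integrating the derivative in $\lambda$ then produces the stated factor $|s^{-(n+2)/2}-t^{-(n+2)/2}|$.

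\textbf{Derivative formula.} Set $f(\lambda):=\theta_{a\lambda}(\lambda x)=\frac{1}{\ome_n (a\lambda)^n}\int \vphi\big(\frac{y-\lambda x}{a\lambda}\big)\,d|M|(y)$. Since $\frac{y-\lambda x}{a\lambda}=\frac{y}{a\lambda}-\frac{x}{a}$, a direct computation gives
\[
\lambda f'(\lambda)=-\frac{1}{\ome_n (a\lambda)^{n}}\int\Big( n\,\vphi(u)+\big\langle\nabla\vphi(u),\tfrac{y}{a\lambda}\big\rangle\Big)d|M|(y),\qquad u=\tfrac{y-\lambda x}{a\lambda}.
\]
Next I test stationarity with the vector field $X(y)=\vphi(u(y))\,y$. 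We have $\mathrm{div}_M X=n\vphi+\langle\pi_{T_yM}\nabla_y\vphi(u),y\rangle$ and $\nabla_y\vphi(u)=\frac{1}{a\lambda}\nabla\vphi(u)$, so the first variation identity gives $\int (n\vphi+\langle\nabla\vphi(u),\pi_{T_yM}\tfrac{y}{a\lambda}\rangle)=0$. Subtracting this from the expression above leaves only the normal part:
\[
\lambda f'(\lambda)=-\frac{1}{\ome_n (a\lambda)^{n+1}}\int\big\langle\nabla\vphi(u),\pi_{T_yM}^\perp(y)\big\rangle\,d|M|(y).
\]
This is the same mechanism as in the proof of the Spatial Variations proposition, with the radial field in place of a constant one.

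\textbf{Estimate.} The integrand is supported in $B_{a\lambda}(\lambda x)\subset B_{3\lambda}(0)\subset B_4$, using $|x|<1$, $a\le 2$ and $\lambda\le 1$. Also $|\nabla\vphi|\le 2$. By Cauchy--Schwarz and the monotonicity bound $|M|(B_{3\lambda})\le C(n)\Lam\lambda^n$,
\[
|\lambda f'(\lambda)|\le C(n)\lambda^{-n-1}\big(\Lam\lambda^n\big)^{1/2}\Big(\int_{B_4}|\pi^\perp_{T_yM}(y)|^2\,d|M|\Big)^{1/2}.
\]
Here $(0,\delta)$-symmetry on $B_4$ means $\frac{1}{\ome_n 4^n}\int_{B_4}4^{-2}|\pi_{T_yM}^\perp(y)|^2<\delta$, so the last factor is at most $C(n)\sqrt{\delta}$. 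Therefore $|f'(\lambda)|\le C(n)\sqrt{\Lam\delta}\,\lambda^{-n/2-2}$.

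\textbf{Integration.} Integrating from $s$ to $t$ gives
\[
|f(s)-f(t)|\le C\sqrt{\Lam\delta}\Big|\int_s^t\lambda^{-(n+4)/2}\,d\lambda\Big|= C\sqrt{\Lam\delta}\,\tfrac{2}{n+2}\big|s^{-(n+2)/2}-t^{-(n+2)/2}\big|,
\]
which is the claim.

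\textbf{Main obstacle.} The stationarity identity needs to be justified: the field $X$ is only Lipschitz, since $\vphi$ is $C^{1,1}$ away from the sphere and merely Lipschitz at the cutoff. This is standard by approximation, exactly as in Proposition~\ref{prop:monotonicity}. The differentiation under the integral in $\lambda$ needs the same justification, or can be replaced by an integrated version of the identity. The other point to check is the support containment $B_{a\lambda}(\lambda x)\subset B_4$, which holds as shown above.
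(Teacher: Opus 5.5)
Your proposal is correct and follows essentially the same route as the paper: differentiate $\theta_{a\lambda}(\lambda x)$ along the ray, test stationarity with $\varphi(u)\,y$ to isolate the normal component $\pi^\perp_{T_yM}(y)$, then apply Cauchy--Schwarz, the $(0,\delta)$-symmetry on $B_4$ and monotonicity, and integrate $\lambda^{-(n+4)/2}$. Substituting $\nabla\varphi(u)=-2u$ turns your derivative formula into exactly the paper's. Your remarks on justifying the Lipschitz test field and the differentiation in $\lambda$ address a point the paper leaves implicit.
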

\begin{proof}
    For $0<s\leq 1$, we differentiate the function $\theta_{as}(sx)$ and test stationarity with the vector field $X(y) = \left( 1- \frac{|y-sx|^2}{a^2 s^2}\right)_+ y$ to obtain
    \begin{equation*}
        \frac{d}{ds}\theta_{as}(sx) = \frac{2}{\ome_n a^{n+2} s^{n+3}} \int_{B_{as}(sx)} \langle \pi_{T_yM}^\perp y, y-sx\rangle d|M|(y).
    \end{equation*}
    By Cauchy-Schwarz, symmetry, and monotonicity we obtain
    \begin{equation*}
        \left|\frac{d}{ds}\theta_{as}(sx)\right|\leq C(n) s^{-n-2} \left( \int_{B_4} |\pi_{T_yM}^\perp y|^2 d|M|(y)\right)^{1/2} |M|(B_{3s})^{1/2} \leq C(n) \sqrt{\Lam \delta} s^{-(n+4)/2}.
    \end{equation*}
    Integrating in $s$ we prove the lemma.
\end{proof}

\begin{prop}
Let $M$ be a stationary integral varifold in $B_4$ with $|M|(B_4)\leq \Lam$ and suppose $M$ is $(0,\delta)$-symmetric on $B_4$. Then, for $1\geq r\geq r(\delta;n,m,\Lam,\eps)$ we have:
\begin{enumerate}
    \item [(1)] If $M$ is $(k,\eps)$-symmetric on $B_1$, then $M$ is $(k,C(n,m)\eps)$-symmetric on $B_r$.
    \item [(2)]  If $M$ is $(k,\eps)$-symmetric on $B_r$ then $M$ is $(k,C(n,m)\eps)$-symmetric on $B_1$.
\end{enumerate}
    Here $r(\delta;n,m,\Lam,\eps)$ means the constant tends to zero as $\delta\to 0$ as the other constants are held fixed.
\end{prop}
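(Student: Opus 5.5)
The plan is to compare the two symmetry integrals at scales $1$ and $r$ through the near-conicality of $M$ on $B_4$, using the rescaling maps $y\mapsto ty$. Write
\[
\Phi_k(s;L):=\frac{1}{\ome_n s^n}\int_{B_s}\Big(|\pi^\perp_{T_yM}|_{L}|^2+s^{-2}|\pi^\perp_{T_yM}\pi_{L^\perp}y|^2\Big)\,d|M|(y).
\]
Since $M$ is $(0,\delta)$-symmetric on $B_4$, the second term satisfies $s^{-2}|\pi^\perp_{T_yM}\pi_{L^\perp}y|^2\le 2s^{-2}|\pi^\perp_{T_yM}y|^2+2|\pi^\perp_{T_yM}|_L|^2$ on $B_s$. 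By the monotonicity formula (Proposition~\ref{prop:monotonicity}) it is therefore controlled by the first term plus $C(n)(\theta_{2s}(0)-\theta_s(0))$, and the latter is bounded by the total pinching $\theta_4(0)-\theta_0(0)\le C\delta$. So, up to a factor $C(n)$ and an additive error $C\delta$ (absorbable once $\delta\ll\eps$, or handled by letting $C\delta$ tend to $0$), it suffices to transfer the \emph{translational} term $\int_{B_s}|\pi^\perp_{T_yM}|_L|^2$ between the scales $s=1$ and $s=r$.

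To transfer it, I would use a contradiction and compactness argument rather than an explicit ODE. This matches the form of the statement: $r(\delta)\to0$ as $\delta\to0$, with no effective rate. Fix $r$. Suppose that along a sequence $\delta_i\to0$ the conclusion fails at this $r$. The varifolds $M_i$, with masses bounded by $\Lam$, subconverge to a stationary integral varifold $M_\infty$. Passing (a4)-type pinching to the limit via Lemma~\ref{lem:(k,0)-symmetry-cone} with $k=0$ shows that $M_\infty$ is a cone on $B_4$. For a cone, the tangent plane at $ty$ equals the tangent plane at $y$, and the measure scales by $t^n$. Hence $s\mapsto s^{-n}\int_{B_s}|\pi^\perp_{T_yM_\infty}|_L|^2\,d|M_\infty|$ is constant in $s$, so $\Phi_k(s;L)$ for the cone is scale independent. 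This gives the conclusion for $M_\infty$ with constant $1$.

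The main obstacle is that the integrand $|\pi^\perp_{T_yM}|_L|^2$ depends on tangent planes, and these do not pass to the limit under weak varifold convergence: only the Grassmannian measure $V$ converges. I would handle this by viewing the integrand as a continuous function on the Grassmann bundle, namely $(y,P)\mapsto|\pi^\perp_P|_L|^2$. Its integral against $V_i$ then converges to the integral against $V_\infty$, provided the domain $B_s$ is replaced by a slightly smaller or larger ball, or by the smooth cutoff $\vphi_s$. This is where the loss $C(n)$ enters: one compares $B_1$ with $B_{1/2}$ or $B_2$, using $(0,\delta)$-symmetry on $B_4$ to allow an enlarged ball. The planes $L_i$ also subconverge, by compactness of the Grassmannian.

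For a fixed $r$, the compactness argument yields the implications (1) and (2) with constant $2C(n,m)$ once $\delta<\delta(r)$. Inverting this relation gives $r(\delta)\to0$. To obtain uniformity in $r$ all the way down to $r(\delta)$, I would iterate: apply the fixed-scale statement on dyadic scales $2^{-j}$. Each step uses the $(0,\delta)$-symmetry on $B_{4\cdot2^{-j}}$, which holds because the pinching is summable. To keep the constant from growing under iteration, I would not compose the estimates. Instead I would compare every scale $2^{-j}\ge r$ directly to scale $1$, using Lemma~\ref{A:lem:radial-comparison}, which says the density is nearly constant along dilations. Concretely, in the contradiction argument I would rescale $B_{r_i}$ with $r_i\ge r(\delta_i)$ and use that the rescaled limit is the same cone. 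I expect choosing the right uniform quantity here to be the delicate point.
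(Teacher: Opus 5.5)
Your core mechanism is sound and is a genuinely different route from the paper's. Along $\delta_i\to0$ the limit is a cone on $B_4$, by lower semicontinuity of $\int_{B_4}|\pi^\perp_{T_yM}y|^2\,d|M|$ and Lemma~\ref{lem:(k,0)-symmetry-cone}. Both terms of the symmetry functional are scale invariant on a cone. The tangent-plane issue is handled correctly by integrating the continuous function $(y,P)\mapsto|\pi_P^\perp|_L|^2+s^{-2}|\pi_P^\perp\pi_{L^\perp}y|^2$ against the varifold measures. You do not even lose a constant there: a cone on $B_4$ gives no mass to any sphere $\partial B_s$, $0<s<4$, so the integrals over $B_1$ and $B_r$ converge exactly and you get constant $2$. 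However, what you say about small scales rests on a false claim, and uniformity in $r$ is not established.

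The false claim is that $(0,\delta)$-symmetry on $B_4$ gives $\theta_4(0)-\theta_0(0)\le C\delta$, or $(0,\delta)$-symmetry on every $B_{4\cdot 2^{-j}}$ (``the pinching is summable''). It is a single-scale condition and only yields
\[
\frac{1}{\omega_n s^{n+2}}\int_{B_s}|\pi_{T_yM}^\perp y|^2\,d|M|(y)\le 4^{n+2}\,\delta\, s^{-(n+2)},
\]
which degenerates as $s\to0$. This is why the statement requires $r\ge r(\delta)$, and why Lemma~\ref{A:lem:radial-comparison} carries the factor $s^{-(n+2)/2}$. For example, the complex curve $\{zw=\lambda\}\subset\mathbb{C}^2$ is $(0,\delta(\lambda))$-symmetric on $B_4$ with $\delta(\lambda)\to0$ as $\lambda\to 0$, yet $\theta_4(0)-\theta_0(0)$ stays bounded away from $0$. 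Moreover it misses $B_r$ for $r<\sqrt{2\lambda}$, so it is vacuously $(1,\varepsilon)$-symmetric there, while it is not $(1,\varepsilon)$-symmetric on $B_1$ for small $\varepsilon$.

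This has three consequences. Your first-paragraph reduction is wrong for small $s$. Your dyadic iteration has no valid input. And the assertion that $M_i/r_i$ with $r_i\to0$ converges to ``the same cone'' would need $\delta_i r_i^{-(n+2)}\to0$ plus a quantitative dilation comparison, i.e.\ essentially the paper's lemma, which you left undone. Separately, a contradiction argument at one fixed $r$ only gives a threshold $\delta_0(r)$ pointwise in $r$. ``Inverting'' it does not by itself produce one $r(\delta)$ that works for every $r\in[r(\delta),1]$.

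The repair stays inside your framework and never goes below a fixed scale. Fix $r_0>0$ and run the contradiction with $r_i\in[r_0,1]$ allowed to vary, passing to $r_i\to r_\infty\in[r_0,1]$. Since the limit cone charges no spheres, $|M_i|(B_{r_i}\triangle B_{r_\infty})\to0$ and the bounded integrands converge. This gives a threshold $\delta_0(r_0)$ valid simultaneously for all $r\in[r_0,1]$, hence monotone in $r_0$, and inverting it gives $r(\delta)\to0$. The first-paragraph reduction can simply be dropped.

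Compared with the paper, the approaches trade softness against quantitative control. The paper turns $(k,\varepsilon)$-symmetry into density pinching at $0$ and $se_j/32$. It then transports $\theta_{at}(tx_j)$ along dilation rays via Lemma~\ref{A:lem:radial-comparison}, with error $C\sqrt{\Lambda\delta}\,r^{-(n+2)/2}$, and rebuilds the symmetry by cone splitting. Your corrected route is softer and gives constant $2$, but no rate. The paper's gives the explicit threshold $\delta\le c\,\varepsilon^2 r^{n+2}/(1+\Lambda)$.
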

\begin{proof}
    Let us prove both items simultaneously. For $r(\delta;n,m,\Lam,\eps)$ to be chosen at the end, we 
    fix $s,t\in [r(\delta;n,m,\Lam,\eps),1]$ and 
    suppose $M$ is $(k,\eps)$-symmetric with respect to $L^k$ on $B_s$. We show that $M$ is $(k,C\eps)$-symmetric on $B_t$, and  take $(s,t) = (1,r)$ to prove (1) and $(s,t) = (r,1)$ to prove (2). 
    Letting $e_1,\ldots, e_k$ be an orthonormal basis for $L^k$, we set $x_0 = 0$ and $x_j = e_j/8$. 
    The $(k,\eps)$-symmetry implies 
    \begin{equation*}
        |\theta_{s/2}(sx_j/4) - \theta_{s/4}(sx_j/4)|\leq C(n)\eps
    \end{equation*}
    and so by Lemma~\ref{A:lem:radial-comparison} we can choose $\delta=\delta(n,m,\Lam,\eps, r)$ sufficiently small so that 
    \begin{equation*}
        |\theta_{at}(tx_j) - \theta_{as/4}(sx_j/4)|<\eps
    \end{equation*}
    for all $a\in [1,2]$. Note that $\delta$ and $r(\delta;n,m,\Lam,\eps)$ must be chosen so that $\delta\leq c(n,m) \frac{\eps^2}{1+\Lam} r(\delta;n,m,\Lam,\eps)^{n+2}$, which  shows the dependency on $\delta$. Summing the  estimates for $a=1,2$ gives
    \begin{equation*}
        \theta_{2t}(tx_j) - \theta_t(tx_j)\leq C(n)\eps.
    \end{equation*}
    From this pinching it is now easy to see the $(k,C(n,m)\eps)$-symmetry on $B_t$.
\end{proof}

\subsection*{Acknowledgments} 
The author is incredibly grateful to his advisor, Camillo De Lellis, for introducing the problem to him and for his amazing mentorship, advice, and support. He would also like to thank Aaron Naber and Zhenhua Liu for very insightful discussions and for their encouragement. 

This material is based upon work supported by the National Science Foundation Graduate Research Fellowship Program under Grant No. DGE-2444107. Any opinions, findings, and conclusions or recommendations expressed in this material are those of the author(s) and do not necessarily reflect the views of the National Science Foundation.

\subsection*{AI Disclosure}
The author acknowledges the use of AI tools, especially Codex with Chat-GPT Sol and Astra. 
Early on, the author had proved the main theorem in 2 dimensions with Gauss-Bonnet and it was clear that a more robust argument by a density pinching or superconvexity estimate would extend to  higher dimensions. After many attempts (both individually and with interaction with the LLM), Chat-GPT Sol 5.6 suggested the 2-dimensional Jacobi field argument sketched in the introduction, after which the rest of the argument was clear to the author. 
LLMs were also used to help proofread the article and  fill in details. 
All text in the article was written by the author, and he takes full responsibility for the content.

\bibliographystyle{alphaurl}
\bibliography{bibliography}

\end{document}